\documentclass{amsart}
\usepackage{amsmath,amsfonts,amsthm, amssymb}
\usepackage{hyperref}
\usepackage{color}
\usepackage[font=footnotesize]{idxlayout}
\newtheorem{lemma}[equation]{Lemma}
\newtheorem{proposition}[equation]{Proposition}
\newtheorem{theorem}[equation]{Theorem}
\newtheorem{corollary}[equation]{Corollary}
\numberwithin{equation}{section}
\theoremstyle{definition}
\newtheorem{definition}[equation]{Definition}
\newtheorem{context}[equation]{Context}
\newtheorem{notation}[equation]{Notation}
\theoremstyle{remark}
\newtheorem{example}[equation]{Example}
\newtheorem{remark}[equation]{Remark}

\newcommand\Ga{{\mathbb G_a}}
\newcommand\bB{{\mathbf B}}
\newcommand\bG{{\mathbf G}}
\newcommand\bH{{\mathbf H}}
\newcommand\bK{{\mathbf K}}
\newcommand\bL{{\mathbf L}}
\newcommand\bQ{{\mathbf Q}}
\newcommand\bS{{\mathbf S}}
\newcommand\bT{{\mathbf T}}
\newcommand\bU{{\mathbf U}}
\newcommand\BQ{{\mathbb Q}}
\newcommand\BZ{{\mathbb Z}}
\newcommand\CA{{\mathcal A}}
\newcommand\CC{{\mathcal C}}
\newcommand\CO{{\mathcal O}}
\newcommand\Gun{{\bG^1}}
\newcommand\Tun{{\bT^1}}
\newcommand\Gtso{{(\bG^{t\sigma})^0}}
\newcommand\Gts{{\bG^{t\sigma}}}
\newcommand\Gs{{\bG^\sigma}}
\newcommand\Gso{{(\Gs)^0}}
\newcommand\Lso{{(\bL^\sigma)^0}}
\newcommand\Tso{{(\bT^\sigma)^0}}
\newcommand\Zso{{(Z(\bG)^\sigma)^0}}
\newcommand\Ts{{\bT^\sigma}}
\newcommand\LT{{\bT/[\bT,\sigma]}}
\newcommand\tW{{\widetilde W}}
\newcommand\ve{{\varepsilon}}
\newcommand\Wa{W_a}
\newcommand\lexp[2]{\kern\scriptspace\vphantom{#2}^{#1}\kern-\scriptspace#2}
\newcommand\inv{^{-1}}
\newcommand\ptors{{p'\text{-}\mathrm{tors}}}
\DeclareMathOperator\Aut{\mathrm{Aut}}
\DeclareMathOperator\AZ{\mathrm{AZ}}
\DeclareMathOperator\Hom{\mathrm{Hom}}
\DeclareMathOperator\Ker{\mathrm{Ker}}
\DeclareMathOperator\Rad{\mathrm{Rad}}
\DeclareMathOperator\Res{\mathrm{Res}}
\newcommand\GL{\mathrm{GL}}
\newcommand\SL{\mathrm{SL}}
\newcommand\PGL{\mathrm{PGL}}
\newcommand\Orth{\mathrm{O}}
\newcommand\PSO{\mathrm{PSO}}
\newcommand\SO{\mathrm{SO}}
\newcommand\Sp{\mathrm{Sp}}
\newcommand\PSp{\mathrm{PSp}}
\newcommand\Spin{\mathrm{Spin}}
\newcommand\Id{\mathrm{Id}}
\newcommand\ad{\mathrm{ad}}
\newcommand\car{\mathrm{char}}
\newcommand\diag{\mathrm{diag}}
\newcommand\antidiag{\mathrm{antidiag}}
\newcommand\qss{quasi-semisimple}
\newcommand\card[1]{{|#1|}}
\newcommand\genby[1]{\mathopen\langle#1\mathclose\rangle}
\newcommand\pairing[2]{{\mathopen(#1,#2\mathclose)}}
\newcommand\CHEVIE{{\tt CHEVIE}}
\newcommand\GAP{{\tt GAP3}}
\newcommand{\nnode}[1]{{\kern 4.5pt\hbox to
0pt{\hss{$\mathop\bigcirc\limits_{#1}$}\hss}\kern 4.4pt}}
\newcommand{\sbar}{{\vrule width10pt height3pt depth-2pt}}
\newcommand{\dbar}{{\rlap{\vrule width10pt height2pt depth-1pt} 
                 \vrule width10pt height4pt depth-3pt}}
\newcommand\edge{{\vrule width10pt height3pt depth-2pt}}
\newcommand\vertbar[2]{\rlap{\kern4pt\vrule width1pt height17.3pt depth-7.3pt}
 \rlap{\raise19.4pt\hbox{$\kern -0.4pt\bigcirc\scriptstyle#2$}}
                 \nnode{#1}}
\title{Quasi-semisimple elements}
\author{F.~Digne and J.~Michel}
\address[J.~Michel]{Institut de Math\'ematiques de Jussieu -- Paris rive
gauche, Universit\'e Denis Diderot, B\^atiment Sophie Germain,
75013, Paris France.}
\email{jean.michel@imj-prg.fr}
\urladdr{webusers.imj-prg.fr/$\sim$jean.michel}
\address[F.~Digne]{Laboratoire Ami\'enois de Math\'ematique Fondamentale et 
Appliqu\'ee, CNRS UMR 7352, Universit\'e de Picardie-Jules Verne,
80039 Amiens Cedex France.}
\email{digne@u-picardie.fr}
\urladdr{www.lamfa.u-picardie.fr/digne}

\subjclass[2010]{20G15}
\keywords{disconnected reductive groups, quasi-semisimple automorphisms, quasi-isolated classes}
\makeindex
\begin{document}
\maketitle
\begin{abstract}
We study \qss\ elements of disconnected reductive algebraic groups
over an algebraically closed field. We describe their centralizers, 
define isolated and quasi-isolated \qss\ elements and classify their
conjugacy classes.
\end{abstract}
\section*{Introduction}
In this paper we study conjugacy classes and centralizers
of  \qss\  elements  in  disconnected  reductive  groups over algebraically
closed fields.

Let  $\bG$ be an algebraic group  such that the connected component $\bG^0$
is  reductive.  Let  $\Gun$
\index{G1@$\bG$, $\Gun$}
be  a  connected  component  of  $\bG$
and $\bH$  be the  subgroup of  $\bG$ generated by $\Gun$.
Then  $\bG^0$-conjugacy  classes  of  $\Gun$  coincide  with $\bH$-conjugacy
classes, and $\bG$-conjugacy classes of $\Gun$ are unions of 
orbits of the  finite  group  $\bG/\bH$ on $\bH$-conjugacy classes.
Thus it is reasonable to first consider the case where $\bG$ is generated by
$\Gun$.

This is  closely related  to the  study of algebraic automorphisms of
$\bG^0$:  if $\bG^0$ is  adjoint, the elements of $\Gun$ can be interpreted
as a class of automorphisms of $\bG^0$ modulo inner automorphisms.

According  to  these  considerations,  in  what  follows  we  will  use the
following  setting: we  denote by  $\sigma$ an  element of a (disconnected)
reductive  group with identity
component denoted by $\bG$, and  we  study  the  conjugacy
classes  in the coset $\Gun:=\bG\cdot\sigma$.

We  consider algebraic groups  over some algebraically  closed field $k$ of
characteristic $p$ \index{p@$p$}, where $p$ is $0$ or a prime number.

We  denote  by $n_\sigma$ \index{nsigma@$n_\sigma$}
the smallest  power of  $\sigma$ which acts by an inner
automorphism on $\bG$.

A  {\em \qss} element is an element of  $\Gun$ which stabilizes a pair 
$\bT\subset\bB$ of a maximal torus of $\bG^0$ and a Borel subgroup containing
it;  let $\Tun$ \index{Tun@$\bT$, $\Tun$} be the normalizer
of  this pair  in $\Gun$.  
Since all such pairs are conjugate, and we are interested in \qss\ elements
up to conjugacy, we study the elements of $\Tun$.

An element  $\sigma\in\Tun$ acts  on $W:=N_\bG(\bT)/\bT$; \index{W@$W$}
its order as an automorphism of $\bT$ is $n_\sigma$. If $\bG$ is semisimple,
$n_\sigma$ is also the order of the action of $\sigma$ on $W$ since 
an algebraic automorphism of $\bG$ stabilizing $\bT$ and trivial on $W$
is inner.

Our results are as follows.

In  the  first  section,  we  study  the  centralizer  $\Gs$  of  $\sigma$,
generalizing  the  work  of  Steinberg  \cite{St}  when $\bG$ is semisimple
simply  connected,  and  of  \cite{grnc}  which  deals  with  the connected
component  $\Gso$. Then we  give a first  approach to the classification of
semisimple  classes,  reducing  the  problem  to questions on the quotient
torus $\LT$.

In  the  second  section,  we  fix  a  quasi-central  element $\sigma$ (see
Definition \ref{quasicentral}) to give more specific results: we need only to consider
elements  $t\sigma\in\Tso\cdot\sigma$,  and  we  introduce  a  root  system
$\Phi_\sigma$  such that  \qss\ classes  are classified  by elements of the
fundamental alcove of the corresponding affine Weyl group.

In the third section we introduce isolated and quasi-isolated elements and
indicate a method to classify them.
This allows us in particular to state Theorem \ref{A(ZGtso)}
which describes the exponent of the group $Z(\Gtso)/Z^0(\Gtso)$.

Finally in the last section we classify explicitly the quasi-isolated classes
in quasi-simple reductive groups, and describe their centralizers.

The assumption that $k$ is algebraically closed is unnecessary for quite a few
results in this paper, but we adopted it for simplicity.

We  thank Cedric Bonnaf\'e  twice. First, for  having written \cite{cedric}
which we generalize to our setting. Second, for giving us the
impulse  of writing this paper by asking us whether a result like Theorem \ref{A(ZGtso)}
could be true.
We thank Gunter Malle for his remarks on a first version of this paper.
\section{Quasi-semisimple elements}
\subsection*{The tori $\Tso$ and $\LT$.}
In this subsection, we describe facts valid for any torus
$\bT$ with an algebraic automorphism $\sigma$ of finite order $n$. 

Let $Y:=Y(\bT)=\Hom(k^\times,\bT)$ and $X:=X(\bT)=\Hom(\bT,k^\times)$,
\index{Y@$Y$}\index{X@$X$} and let $\pairing{}{}$ denote the natural
pairing between $Y$ and $X$.

We have $Y(\Tso)=Y^\sigma$.
To describe $X(\Tso)$ we introduce the map $\pi:X\otimes\BQ\to
X\otimes\BQ:x\mapsto n\inv\sum_{i=1}^n\sigma^i(x)$. \index{pi@$\pi$}
Then 
\begin{lemma}\label{X(Tso)} The restriction of
characters from $\bT$ to $\Tso$ induces an isomorphism 
$ \pi(X)\xrightarrow\sim X(\Tso)$.
The natural action of $W^\sigma$ on $X(\Tso)$ corresponds via this isomorphism
to the restriction of its action to $\pi(X)\subset X\otimes\BQ$.
\end{lemma}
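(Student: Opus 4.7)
The plan is to factor the restriction map $X(\bT)\to X(\Tso)$ through $\pi$, then establish surjectivity and injectivity of the induced map $\pi(X)\to X(\Tso)$ separately.

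First I would observe that for any $x\in X$ and any $t\in\Tso$, one has $\sigma^i(x)(t)=x(\sigma^{-i}t)=x(t)$, so the restriction $x|_{\Tso}$ depends only on $\pi(x)\in X\otimes\BQ$. In particular, taking any $x$ in the preimage $\pi\inv(\pi(x))$, the restriction of $\pi(x)$, interpreted as $\frac1n\sum_i\sigma^i(x)|_{\Tso}$, equals the integral character $x|_{\Tso}$ of $\Tso$. This yields a well-defined map $\pi(X)\to X(\Tso)$ and a factorization of restriction as $X\xrightarrow\pi\pi(X)\to X(\Tso)$.

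For surjectivity I would invoke the standard fact that restriction of characters along a closed embedding of tori $\Tso\hookrightarrow\bT$ is surjective (for instance, by reduction to subtori of diagonalizable groups, or dually from the injectivity of $Y(\Tso)\hookrightarrow Y(\bT)$). Hence the composite $X\to X(\Tso)$ is onto, and so the second factor $\pi(X)\to X(\Tso)$ is onto as well. For injectivity, the key point is the duality between $\pi(X)$ and $Y^\sigma=Y(\Tso)$: since $\pi$ is the $\BQ$-linear projection onto $(X\otimes\BQ)^\sigma$, its image $(X\otimes\BQ)^\sigma$ pairs non-degenerately with the corresponding subspace $(Y\otimes\BQ)^\sigma=Y^\sigma\otimes\BQ$ under $\pairing\cdot\cdot$. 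If $\pi(x)\in\pi(X)$ restricts to the trivial character of $\Tso$, then $\pairing y{\pi(x)}=0$ for all $y\in Y^\sigma$, and non-degeneracy forces $\pi(x)=0$. The main subtle point in the whole argument is precisely this use of duality to pass from vanishing on $Y^\sigma$ back to vanishing in $\pi(X)$; everything else is formal.

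Finally, for the $W^\sigma$-equivariance: since $w\in W^\sigma$ commutes with $\sigma$, it stabilizes $\bT^\sigma$ and hence its identity component $\Tso$, so the restriction map is $W^\sigma$-equivariant; and $w$ commuting with $\sigma$ also commutes with the averaging operator $\pi$ on $X\otimes\BQ$, so $w$ preserves $\pi(X)$ and acts there by restriction of its action on $X\otimes\BQ$. Combining these two equivariances with the isomorphism $\pi(X)\xrightarrow\sim X(\Tso)$ established above gives the claim.
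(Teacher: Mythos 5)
Your proof is correct, and it follows the paper's overall plan: factor the restriction map through $\pi$, obtain surjectivity from the fact that a character of a subtorus lifts to the ambient torus, and reduce everything to identifying the kernel of restriction with $\ker\pi$. The one place where you genuinely diverge is that crux. The paper introduces the norm map $N_\sigma:\bT\to\bT$, $t\mapsto t\cdot\lexp\sigma t\cdots\lexp{\sigma^{n-1}}t$, shows its image is exactly $\Tso$ (the image is a torus of $\sigma$-fixed points on which $N_\sigma$ acts as the $n$-th power map), and then notes that $\chi\circ N_\sigma=(1+\sigma+\cdots+\sigma^{n-1})\chi$, so that triviality of $\chi$ on $\Tso$ is literally the condition $\pi(\chi)=0$. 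You instead pair against $Y(\Tso)=Y^\sigma$ and invoke the non-degeneracy of the pairing between $(X\otimes\BQ)^\sigma$ and $(Y\otimes\BQ)^\sigma$; this is valid (it follows from the $\sigma$-invariance of the pairing, which makes $\pi$ self-adjoint, and it is exactly the content the paper records just afterwards in Lemma \ref{XsYs}). The trade-off: the norm-map argument is self-contained and produces the surjectivity of $N_\sigma$ onto $\Tso$, which the paper reuses later (e.g.\ in the proof of Lemma \ref{X(T/LT)}(2)), whereas your duality argument leans on the identification $Y(\Tso)=Y^\sigma$ stated just before the lemma and anticipates Lemma \ref{XsYs}. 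Your explicit check of $W^\sigma$-equivariance is slightly more detailed than the paper's, which simply asserts it as part of the factorization.
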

\begin{proof}For $t\in\Tso$ and $\chi\in X$
all the elements in the $\sigma$-orbit of $\chi$ have same value on $t$, hence
the value $\chi(t)$ depends only on $\pi(\chi)$. 
Hence, if we show that $\ker\pi$ coincides with the characters which have 
a trivial restriction to
$\Tso$, we can factorize $\Res^\bT_\Tso$
through an injective 
$W^\sigma$-equivariant map  $\pi(X)\xrightarrow\iota X(\Tso)$.
The image of the map $N_\sigma:\bT\to\bT:t\mapsto
t\cdot\lexp\sigma t\ldots\lexp{\sigma^{n-1}}t$
is $\Tso$ since the image is a torus, consists of $\sigma$-stable elements, 
and $N_\sigma$ acts on $\Tso$ as raising to the $n$-th power. 
A character $\chi\in X$ is in
$\ker\pi$ if and only if $0=(1+\sigma+\ldots+\sigma^{n-1})\chi=
\chi\circ N_\sigma$ which is equivalent to $\Res^\bT_\Tso\chi=0$. 

It remains to show that $\iota$ is surjective; this comes from the
fact that any character of the subtorus $\Tso$ can be lifted to a character of
$\bT$, since a subtorus is a direct factor. 
\end{proof}

\begin{lemma}\label{A(Ts)}
\begin{enumerate}
\item $\Ts/\Tso$ is isomorphic to the $p'$-part of the group
$$H=\Ker(1+\sigma+\ldots+\sigma^{n-1}\mid X)/(\sigma-1)X,$$ 
whose exponent divides $n$.
\item If $\sigma$ permutes a basis of $X$, then
$\Ker(1+\sigma+\ldots+\sigma^{n-1}\mid X)=(\sigma-1)X$.
Thus if $\sigma$ permutes a basis of $X$ then $\Ts=\Tso$.
\item The intersection $[\bT,\sigma]\cap\Ts$ has exponent $n$, where
\index{Tz@$[\bT,\sigma]$}
$[\bT,\sigma]$ is the torus $\{t\sigma(t\inv)\mid t\in\bT\}$.
We have $\bT=[\bT,\sigma]\cdot\Tso$, and $Y([\bT,\sigma])=
\Ker(1+\sigma+\ldots+\sigma^{n-1}\mid Y)$.
\end{enumerate}
\end{lemma}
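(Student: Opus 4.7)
My plan is to translate every assertion into a statement about the character lattice $X$ via $\bT=\Hom(X,k^\times)$, exploiting that $k^\times$ is divisible so $\Hom(-,k^\times)$ is an exact contravariant functor on abelian groups. Throughout, write $N=1+\sigma+\cdots+\sigma^{n-1}$ for the norm endomorphism of $X$ (or of $Y$). For (1), a character $\chi$ vanishes on $\Ts$ iff the corresponding $f\colon X\to k^\times$ satisfies $f\circ\sigma=f$, yielding $\Ts=\Hom(X/(\sigma-1)X,k^\times)$; Lemma \ref{X(Tso)} similarly gives $\Tso=\Hom(X/\ker N,k^\times)$, using $\ker\pi\cap X=\ker N$. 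The inclusion $(\sigma-1)X\subseteq\ker N$ (from $N(\sigma-1)=0$) then produces the short exact sequence
\[0\to H\to X/(\sigma-1)X\to X/\ker N\to 0,\]
which dualizes to $1\to\Tso\to\Ts\to\Hom(H,k^\times)\to 1$. The identity $n\chi-N\chi=\sum_{i=1}^{n-1}(1-\sigma^i)\chi\in(\sigma-1)X$ shows that $nH=0$, and since $k^\times$ contains all $m$-th roots of unity for $(m,p)=1$ but no non-trivial $p$-th roots, $\Hom(H,k^\times)$ is abstractly the $p'$-part of $H$.

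Claim (2) reduces, on each $\sigma$-orbit summand $\BZ^m$ of a permuted basis, to a one-line computation that both $\ker(1+\sigma+\cdots+\sigma^{m-1})$ and $(\sigma-1)\BZ^m$ equal $\{\sum a_ie_i:\sum a_i=0\}$; since $N$ is $n/m$ times this local norm on each summand and $X$ is torsion-free, $\ker N=(\sigma-1)X$ on all of $X$, so $H=0$ and (1) gives $\Ts=\Tso$.

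For (3), note first that $X([\bT,\sigma])=X/X^\sigma$: a character $\chi$ kills $\{t\sigma(t^{-1})\}$ iff $\sigma\chi=\chi$. Dually, $Y([\bT,\sigma])$ is the annihilator $(X^\sigma)^\perp$ in $Y$, which equals the saturation of $(\sigma-1)Y$ in $Y$ via the $\sigma$-stable decomposition $Y_\BQ=Y^\sigma_\BQ\oplus(\sigma-1)Y_\BQ$; this saturation coincides with $\ker N|_Y$ because the cocharacter analogue $ny\in(\sigma-1)Y$ for $y\in\ker N$ combined with saturatedness of $\ker N$ gives the nontrivial inclusion. The intersection $[\bT,\sigma]\cap\Ts=[\bT,\sigma]^\sigma$ is now computed by applying the key identification of (1) to the $\sigma$-stable torus $[\bT,\sigma]$: it equals $\Hom(X/(X^\sigma+(\sigma-1)X),k^\times)$, and the same $nx=Nx+\sum_{i=1}^{n-1}(1-\sigma^i)x\in X^\sigma+(\sigma-1)X$ identity shows this quotient has exponent dividing $n$, hence so does its $\Hom$. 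Finally, $\bT=[\bT,\sigma]\cdot\Tso$ follows by a dimension count: $\dim[\bT,\sigma]+\dim\Tso=\mathrm{rk}(\ker N|_Y)+\mathrm{rk}(Y^\sigma)=\mathrm{rk}(Y)$, while $Y([\bT,\sigma])\cap Y(\Tso)\subseteq\ker N\cap Y^\sigma=0$ (any $\sigma$-fixed $y$ with $Ny=0$ satisfies $ny=0$, hence $y=0$), so the closed subgroup $[\bT,\sigma]\cdot\Tso$ has full dimension in the connected $\bT$.

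The hardest part will be the bookkeeping for (3), especially verifying $Y([\bT,\sigma])=\ker N|_Y$: this requires recognizing that passing from the clean $\BQ$-answer $(\sigma-1)Y_\BQ$ back to $\BZ$ forces one to saturate, and that this saturation happens to coincide with $\ker N|_Y$ rather than being a strictly larger sublattice.
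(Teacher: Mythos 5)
Your proposal is correct and follows essentially the same route as the paper: part (1) via the identification $X(\Ts)\cong X/(\sigma-1)X$ (exactness of the character functor applied to $t\mapsto t\sigma(t\inv)$) together with the identity $nx\equiv(1+\sigma+\cdots+\sigma^{n-1})x\pmod{(\sigma-1)X}$, part (2) by the same orbit-by-orbit computation, and part (3) by purity of $\Ker(1+\sigma+\cdots+\sigma^{n-1}\mid Y)$ plus a dimension count. The only real divergence is your bound on the exponent of $[\bT,\sigma]\cap\Ts$, which the paper obtains from the one-line telescoping identity $[t,\sigma]\cdot\sigma([t,\sigma])\cdots\sigma^{n-1}([t,\sigma])=t\sigma^n(t\inv)=1$, whereas you deduce it from $nX\subset X^\sigma+(\sigma-1)X$ applied to $X([\bT,\sigma])=X/X^\sigma$; both arguments are valid.
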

\begin{proof}
For  (1), $\Ts/\Tso$ is isomorphic  to $X(\Ts)_\ptors$, where $\ptors$ denotes
the $p'$-part of the torsion. Considering the map
$t\mapsto[t,\sigma]:\bT\to[\bT,\sigma]$  of  kernel  $\Ts$  and  using  the
exactness  of the functor  $X$, we get  that $X(\Ts)_\ptors$ is isomorphic to
$(X/(\sigma-1)X)_\ptors$. Since
$X/\Ker(1+\sigma+\ldots+\sigma^{n-1})$  has  no  torsion,  we get that
$X(\Ts)_\ptors$  is  isomorphic  to  $H_\ptors$.  
Now  modulo $\sigma-1$  we  have
$(1+\sigma+\ldots+\sigma^{n-1})x=nx$ whence the assertion that the exponent
of $H$ divides $n$.

(2) results from an explicit computation: let $x_1,\ldots,x_r$ be a basis
of $X$ such that $\sigma$ induces a permutation of
$\{1,\ldots,r\}$. Then $\sum_i{a_i x_i}\in\Ker(1+\sigma+\ldots+\sigma^{n-1})$
if and only if  for each orbit $\CO$ of $\sigma$ on $\{1,\ldots,r\}$ we
have $\sum_{i\in\CO}a_i=0$. On the other hand $\sum_i{a_i x_i}$ is in the
image of $\sigma-1$ if and only if $a_i$ is of the form $b_{\sigma\inv(i)}-b_i$
for some sequence $b_i$. The two conditions are clearly equivalent.

The first item of (3) results from the remark that if $[t,\sigma]\in\Ts$
then $[t,\sigma]^n=(t\sigma(t\inv))\sigma(t\sigma(t\inv))\ldots
\sigma^{n-1}(t\sigma(t\inv))=t\sigma^n(t\inv)$. The second item follows:
since the kernel of $t\mapsto[t,\sigma]:\bT\to[\bT,\sigma]$ is $\Ts$,
we have $\dim\bT=\dim\Tso+\dim[\bT,\sigma]$ and the intersection is finite by
the first part. Finally, for the third item, $Y([\bT,\sigma])$ is a pure
sublattice  of $Y$ which spans the same $\BQ$-subspace as the image of
$\sigma-1$; this is the case of $\Ker(1+\sigma+\ldots+\sigma^{n-1}\mid Y)$,
since a kernel is always a pure sublattice.
\end{proof}
When $\sigma$ permutes a basis of $X$, by
(2) of Lemma \ref{A(Ts)}, $\pi(X)$ coincides with the
$\sigma$-coinvariants of $X$. Though this assumption does not hold
in general, the duality between $Y^\sigma$ and $\pi(X)$ plays a similar role
to the duality between invariants and coinvariants.
To remind of this analogy we will use the following notation:
\begin{notation}
\index{Xsigma@$X_\sigma$}\index{Ysigma@$Y_\sigma$}
We write $X_\sigma$ for $\pi(X)$ and  
$Y_\sigma$ for $\pi(Y)$ where
$\pi:Y\to Y\otimes\BQ:y\mapsto n\inv\sum_{i=1}^n\sigma^i(y)$.
\index{pi@$\pi$}
\end{notation}

The following is true in general for lattices with a $\sigma$-stable pairing
\begin{lemma} \label{XsYs}
The  restriction of the duality  between $X\otimes\BQ$ and $Y\otimes\BQ$ to
$(X\otimes\BQ)^\sigma$  and $(Y\otimes\BQ)^\sigma$ is a duality which makes
the  lattices $(X_\sigma,Y^\sigma)$, as well as $(X^\sigma,Y_\sigma)$, dual
to each other.
\end{lemma}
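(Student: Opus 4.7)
The plan is to deduce the lattice duality from a $\BQ$-level duality on the $\sigma$-invariant subspaces, using an extension argument that hinges on $X^\sigma$ and $Y^\sigma$ being pure sublattices. I would first extract from the $\sigma$-invariance $\pairing{\sigma(y)}{\sigma(x)}=\pairing yx$ of the pairing the self-adjointness $\pairing{\pi(y)}{x}=\pairing y{\pi(x)}$ of $\pi$. Since $\pi$ is the projector on $(X\otimes\BQ)^\sigma$ along $\Ker\pi=(1-\sigma)(X\otimes\BQ)$, and symmetrically on the $Y$-side, a short computation using $\sigma$-invariance shows that $(1-\sigma)(X\otimes\BQ)$ pairs trivially with $(Y\otimes\BQ)^\sigma$. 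Combined with the non-degeneracy of the full pairing, this yields (a) non-degeneracy of the restricted pairing on $(X\otimes\BQ)^\sigma\times(Y\otimes\BQ)^\sigma$, and (b) the identification of the orthogonal of $Y^\sigma$ inside $X\otimes\BQ$ with $\Ker\pi$.

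For the duality of $(X_\sigma,Y^\sigma)$, the integrality direction is the identity $\pairing y{\pi(x)}=\pairing{\pi(y)}{x}=\pairing yx\in\BZ$ for $y\in Y^\sigma$ and $x\in X$, showing that $X_\sigma=\pi(X)$ lies in the $\BZ$-dual of $Y^\sigma$ inside $(X\otimes\BQ)^\sigma$. For the reverse inclusion I would take $\xi\in(X\otimes\BQ)^\sigma$ with $\pairing y\xi\in\BZ$ for all $y\in Y^\sigma$ and exploit that $Y^\sigma=\Ker(\sigma-1\mid Y)$ is a pure sublattice of $Y$, hence a direct summand: the functional $y\mapsto\pairing y\xi$ on $Y^\sigma$ then extends to a $\BZ$-linear functional on $Y$, given by pairing with some $x'\in X$. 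By construction $\xi-x'$ lies in the orthogonal of $Y^\sigma$, which equals $\Ker\pi$ by (b), and applying $\pi$ to $\xi=x'+(\xi-x')$ yields $\xi=\pi(\xi)=\pi(x')\in X_\sigma$.

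The duality for $(X^\sigma,Y_\sigma)$ follows by the same argument with the roles of $X$ and $Y$ interchanged; the whole proof is symmetric because the pairing is $\sigma$-invariant and $\pi$ is self-adjoint. The main obstacle — and really the only non-formal step — is the extension in the previous paragraph, which depends on $X^\sigma$ and $Y^\sigma$ being direct summands of their ambient lattices. This is the pure-sublattice fact already invoked at the end of Lemma \ref{A(Ts)} (a kernel between lattices is always pure); without it one could not lift an integer-valued functional on the invariants to an integer-valued functional on $Y$, and the $\BZ$-dual of $Y^\sigma$ could strictly contain $X_\sigma$.
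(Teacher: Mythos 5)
Your proof is correct, but it runs the dual computation in the opposite direction from the paper, and this makes it genuinely harder than it needs to be. The paper computes the dual of $X_\sigma=\pi(X)$ inside $(Y\otimes\BQ)^\sigma$: if $y$ is $\sigma$-invariant and $\pairing y{\pi(x)}\in\BZ$ for all $x\in X$, then the identity $\pairing y{\pi(x)}=\pairing yx$ (which you also prove) immediately gives $\pairing yx\in\BZ$ for all $x\in X$, hence $y\in Y$ and so $y\in Y^\sigma$; the converse containment is the same identity read backwards. No purity or extension argument is needed in that direction, because integrality against $\pi(X)$ is \emph{equivalent} to integrality against all of $X$ for an invariant $y$. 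You instead compute the dual of $Y^\sigma$ inside $(X\otimes\BQ)^\sigma$, and there the analogous shortcut is unavailable: you must lift an integer-valued functional from $Y^\sigma$ to $Y$, which is exactly where you (correctly) invoke that $Y^\sigma=\Ker(\sigma-1\mid Y)$ is pure, hence a direct summand, and then kill the discrepancy $\xi-x'$ by identifying the orthogonal of $Y^\sigma$ with $\Ker\pi$ and applying $\pi$. All of these steps are sound, and your diagnosis that the purity is the only non-formal ingredient of \emph{your} route is accurate --- but it is an ingredient the paper's choice of direction avoids entirely. Note also that both you and the paper establish only one of the two containments defining mutual duality and implicitly rely on biduality for full-rank lattices under a non-degenerate pairing to get the other; that is harmless but worth being aware of.
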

\begin{proof}
That the restriction of the duality is a duality is clear. 

Let us compute the dual of $X_\sigma$.
If $y\in (Y\otimes\BQ)^\sigma$ and
$\pairing  y{\pi(x)}\in\BZ$ for all $x\in  X$, then, since $\pairing{}{}$ 
is $\sigma$-invariant we have $\pairing y{\pi(x)}=\pairing yx\in\BZ$  
for  all  $x\in X$, so that
$y\in Y$, hence $y\in Y^\sigma$. Conversely, if $y\in Y^\sigma$, then
$\pairing y{\pi(x)}\in\BZ$ for all $x\in  X$.

The proof for $(X^\sigma,Y_\sigma)$ is symmetric to the above proof.
\end{proof}

\begin{lemma}
Through the isomorphism  $(X(\Tso),Y(\Tso))\simeq(X_\sigma,Y^\sigma)$
(see Lemma \ref{X(Tso)}) the duality of Lemma \ref{XsYs}
between $X_\sigma$ and $Y^\sigma$ identifies with the
natural duality between $X(\Tso)$ and $Y(\Tso)$.
\end{lemma}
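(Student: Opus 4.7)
The plan is to unwind both pairings on matched representatives and observe that they agree by tautology, with the $\sigma$-invariance of the ambient pairing $\pairing{}{}$ doing all the work.

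First I would fix $y\in Y^\sigma$ and $x\in X_\sigma$, and choose a lift $x'\in X$ with $\pi(x')=x$. Under the isomorphism of Lemma~\ref{X(Tso)}, $x$ corresponds to $\Res^\bT_\Tso x'\in X(\Tso)$. Under the identification $Y(\Tso)=Y^\sigma$, the cocharacter $y$ of $\Tso$ is literally the cocharacter $y:k^\times\to\bT$ (it lands in $\Tso$ since $y\in Y^\sigma$).

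Next I would compute the natural duality pairing in $(X(\Tso),Y(\Tso))$: by construction it sends $(\Res^\bT_\Tso x',y)$ to the integer $m$ such that $(\Res^\bT_\Tso x')\circ y:k^\times\to k^\times$ is $t\mapsto t^m$. Since $y$ factors through $\Tso$, this composition coincides with $x'\circ y$, so $m=\pairing y{x'}$, the value of the natural pairing on $\bT$. On the other side, the pairing of Lemma~\ref{XsYs} sends $(y,\pi(x'))$ to $\pairing y{\pi(x')}$, and as already observed in the proof of Lemma~\ref{XsYs}, the $\sigma$-invariance of $y$ gives $\pairing y{\pi(x')}=\pairing y{x'}$. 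Both pairings therefore produce the same integer, which is exactly the required compatibility.

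The main (very small) obstacle is just keeping straight which identifications are being used where — in particular, checking that the element of $Y(\Tso)$ corresponding to $y\in Y^\sigma$ is indeed the restriction-of-codomain of $y$, so that $(\Res^\bT_\Tso x')\circ y = x'\circ y$ as morphisms $k^\times\to k^\times$. Once that is in place, independence of the pairing on the choice of lift $x'$ follows from the fact that $\ker\pi$ was already identified in Lemma~\ref{X(Tso)} with the characters trivial on $\Tso$, so there is nothing further to verify.
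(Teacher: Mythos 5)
Your proposal is correct and follows essentially the same route as the paper: both compute $x'(y(\lambda))=\lambda^{\pairing y{x'}}$, use that $y\in Y^\sigma$ factors through $\Tso$ so this equals $(\Res^\bT_\Tso x')(y(\lambda))$, and invoke $\sigma$-invariance to identify $\pairing y{x'}$ with $\pairing y{\pi(x')}$. Your version merely makes explicit the well-definedness in the choice of lift $x'$, which the paper leaves implicit via Lemma \ref{X(Tso)}.
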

\begin{proof}
By definition, for $\lambda\in k$, $x\in X$ and $y\in Y$
we have $x(y(\lambda))=\lambda^{\pairing yx}$. 
If $y\in Y(\Tso)=Y^\sigma$ then
$x(y(\lambda))=(\Res^\bT_\Tso(x))(y(\lambda))$, which proves the lemma.
\end{proof}

Just as we identified $(X_\sigma,Y^\sigma)$ with $(X(\Tso),Y(\Tso))$, the 
following lemma allows to identify $(X^\sigma,Y_\sigma)$ with
$(X(\LT),Y(\LT))$.
\begin{lemma}\label{X(T/LT)}~
\begin{enumerate}
\item The morphism $\bT\to\LT$ induces an isomorphism $X(\LT)\simeq X^\sigma$.
\item
The natural map $f:Y\to Y(\LT)$ 
induces an isomorphism $Y_\sigma\simeq Y(\LT)$.
\item Through the above isomorphisms
the natural duality between $X(\LT)$ and $Y(\LT)$ identifies
with the duality of Lemma \ref{XsYs} between $X^\sigma$ and $Y_\sigma$.
\end{enumerate}
\end{lemma}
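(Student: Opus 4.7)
The plan is to exploit the short exact sequence of tori
\[
1 \to [\bT,\sigma] \to \bT \to \LT \to 1
\]
(where $[\bT,\sigma]$ is a subtorus as the image of the morphism $t\mapsto t\sigma(t\inv)$) and apply the exact functors $X$ and $Y$, reducing each of the three claims to a lattice-level computation.

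For (1), I would identify $X(\LT)$ with the kernel of the restriction map $X\to X([\bT,\sigma])$. A character $\chi\in X$ lies in this kernel precisely when $\chi(t\sigma(t\inv))=1$ for all $t\in\bT$, i.e.\ when $\chi=\sigma(\chi)$, giving $X(\LT)\simeq X^\sigma$ directly.

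For (2), the same exact sequence yields $Y(\LT)\simeq Y/Y([\bT,\sigma])$. By Lemma \ref{A(Ts)}(3), $Y([\bT,\sigma])=\Ker(1+\sigma+\ldots+\sigma^{n-1}\mid Y)$. Since $\pi$ differs from this norm map by the scalar $n\inv$ (invertible in $\BQ$), the two have the same kernel, so the quotient $Y/Y([\bT,\sigma])$ maps isomorphically to $\pi(Y)=Y_\sigma$. Chasing definitions shows this isomorphism is induced by $f$, as required.

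For (3), the plan is to unwind the two duality pairings simultaneously. Under the isomorphisms from (1) and (2), a pair $(\chi,\bar y)\in X(\LT)\times Y(\LT)$ corresponds to $(\chi,y)\in X^\sigma\times Y$ for any lift $y$ of $\bar y$. The natural pairing on characters and cocharacters of $\LT$ computes $\chi$ evaluated on the image of $y$ in $\LT$, which equals $\chi(y(\lambda))=\lambda^{\pairing y\chi}$ since $\chi$ factors through $\LT$; so this pairing evaluates to $\pairing y\chi$. On the other hand, the Lemma \ref{XsYs} pairing between $X^\sigma$ and $Y_\sigma$ sends $(\chi,\pi(y))$ to $\pairing{\pi(y)}\chi$, which equals $\pairing y\chi$ by $\sigma$-invariance of $\chi$ and of the original pairing. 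So the two match.

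The only real subtlety, rather than an obstacle, is ensuring that $[\bT,\sigma]$ is indeed a subtorus so the sequence above is short exact in the category of tori, and invoking Lemma \ref{A(Ts)}(3) to pin down $Y([\bT,\sigma])$; everything else is formal.
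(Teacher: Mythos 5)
Your proof is correct and follows essentially the same route as the paper: exactness of $X$ and $Y$ applied to the quotient by $[\bT,\sigma]$, the observation that a character is trivial on $[\bT,\sigma]$ iff it is $\sigma$-stable, and the $\sigma$-invariance of the pairing for part (3). The only (harmless) difference is in (2), where you cite Lemma \ref{A(Ts)}(3) to identify $\Ker(Y\to Y(\LT))$ with $\ker\pi$, while the paper re-derives this identification directly via the norm map $N_\sigma$ and proves injectivity and surjectivity by hand.
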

\begin{proof}
A character $x\in X$ is trivial on $[\bT,\sigma]$ if and
only if $x$ is $\sigma$-stable. Since $X$ is exact this gives
the isomorphism $X^\sigma\simeq X(\LT)$ of (1).

We  now prove (2).
The   image  $f(y)\in  Y(\LT)$   depends  only  on   $\pi(y)$:  indeed  let
$y\in\ker\pi$,  then $n_\sigma y$ is  in $\ker\pi$ so  for $\lambda\in k^\times$ we
have   $n_\sigma \pi(y)(\lambda)=1$,   that   is   $N_\sigma(y(\lambda))=1$, 
where $N_\sigma$ is as in the proof of Lemma \ref{X(Tso)}.
Thus $y(k^\times)$  is a  subtorus of $\ker
N_\sigma$  and  the  latter  contains  $[\bT,\sigma]$  as  a  finite  index
subgroup, thus   $y(k^\times)\subset[\bT,\sigma]$.   Conversely,   if
$y(k^\times)\subset[\bT,\sigma]$  then  $N_\sigma\circ  y$  is  trivial, so
$y\in\ker\pi$.  Hence  we  get  a  well  defined injective map $Y_\sigma\to
Y(\LT)$.  The  surjectivity  comes  from  the  fact that, being a subtorus $[\bT,\sigma]$
is a direct factor of $\bT$, so that a cocharacter of $\LT$ can be
lifted to $\bT$.

We  now prove (3). The pairing  between elements of $X(\LT)$ and
$Y(\LT)$  identified via (1) and (2)  respectively with $x\in X^\sigma$ and
$y\in   Y_\sigma$   is   given   by   the  computation  of  $\lambda\mapsto
x(y(\lambda)\text{mod}  [T,\sigma])= x(y(\lambda))=\lambda^{\pairing yx}$.
Since  $\pairing yx=\pairing{\pi(y)}x$  by
$\sigma$-invariance of the pairing and of $x$, we get (3).
\end{proof}
We note for future reference the following:
\begin{lemma}
$((\bT/\Tso)^\sigma)^0=\{1\}$.
\end{lemma}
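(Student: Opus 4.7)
The plan is to apply Lemma \ref{A(Ts)}(3) to the torus $\bS:=\bT/\Tso$, equipped with its induced automorphism $\sigma$ (which is still of finite order dividing $n$). Note that $\bS$ is indeed a torus, being the quotient of $\bT$ by the subtorus $\Tso$. That lemma, in its dimension-count form, gives
\[
\dim\bS=\dim[\bS,\sigma]+\dim(\bS^\sigma)^0
\]
for any such pair $(\bS,\sigma)$, since $\bS=[\bS,\sigma]\cdot(\bS^\sigma)^0$ with finite intersection. Hence it suffices to show that $[\bT/\Tso,\sigma]=\bT/\Tso$.

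For this, I would invoke Lemma \ref{A(Ts)}(3) a second time, now on $\bT$ itself, to obtain $\bT=[\bT,\sigma]\cdot\Tso$. Pushing this equality through the quotient map $\bT\twoheadrightarrow\bT/\Tso$, the image of $[\bT,\sigma]$ equals all of $\bT/\Tso$. On the other hand, because commutators of the form $t\sigma(t^{-1})$ map to commutators $\bar t\sigma(\bar t^{-1})$, this image is precisely $[\bT/\Tso,\sigma]$. Combining, $[\bT/\Tso,\sigma]=\bT/\Tso$, and the dimension formula above then forces $\dim((\bT/\Tso)^\sigma)^0=0$; being a connected algebraic group of dimension zero, $((\bT/\Tso)^\sigma)^0$ is trivial.

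I don't anticipate any real obstacle here: the statement is essentially a formal consequence of Lemma \ref{A(Ts)}(3) once one realizes it can be applied to the quotient torus $\bT/\Tso$. The only small point to verify is that $\Tso$ is indeed a subtorus so that $\bT/\Tso$ is a torus, which is clear since $\Tso$ is defined as the identity component of a closed subgroup of a torus. An equivalent approach, if one prefers to argue lattice-theoretically, is to show directly that $(Y/Y^\sigma)^\sigma=0$: lift $\bar y$ to $y\in Y$, set $z:=\sigma(y)-y\in Y^\sigma$, iterate to get $\sigma^i(y)=y+iz$, and use $\sigma^n(y)=y$ together with the torsion-freeness of $Y$ to deduce $z=0$, whence $\bar y=0$; but the quotient-torus argument above is more concise.
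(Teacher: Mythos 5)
Your proof is correct, but it takes a genuinely different route from the paper's. The paper argues entirely on character lattices: by Lemma \ref{X(Tso)} the restriction $X(\bT)\to X(\Tso)$ is $\pi$, so by exactness of $X$ one gets $X(\bT/\Tso)=\ker\pi$, and applying Lemma \ref{X(Tso)} once more gives $X\bigl(((\bT/\Tso)^\sigma)^0\bigr)=\pi(\ker\pi)=\{0\}$ --- a two-line computation exploiting the tautology that $\pi$ kills its own kernel. You instead work on the group side: you push the decomposition $\bT=[\bT,\sigma]\cdot\Tso$ of Lemma \ref{A(Ts)}(3) through the quotient to get $[\bT/\Tso,\sigma]=\bT/\Tso$, then use the dimension count $\dim\bS=\dim[\bS,\sigma]+\dim(\bS^\sigma)^0$ (valid for any torus $\bS$ with a finite-order automorphism, as the lemma is stated in that generality) to force $\dim((\bT/\Tso)^\sigma)^0=0$. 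All the steps check out: $\Tso$ is $\sigma$-stable so the quotient torus carries an induced finite-order automorphism, the image of $\{t\sigma(t\inv)\}$ under the surjective $\sigma$-equivariant quotient map is indeed $[\bT/\Tso,\sigma]$, and a connected zero-dimensional group is trivial. Your alternative cocharacter argument that $(Y/Y^\sigma)^\sigma=0$ (via $\sigma^i(y)=y+iz$ and torsion-freeness) is essentially the dual of the paper's computation and is the most self-contained of the three. What the paper's version buys is brevity and the reuse of the explicit identification $X(\Tso)\simeq\pi(X)$; what yours buys is independence from Lemma \ref{X(Tso)}, relying only on the structural decomposition of Lemma \ref{A(Ts)}(3).
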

\begin{proof}
By Lemma \ref{X(Tso)} the restriction map from $X(\bT)$ to $X(\Tso)$ is $\pi$. 
From the exactness of the functor $X$ we deduce $X(\bT/\Tso)=\ker\pi$. 
Applying Lemma \ref{X(Tso)} again we get 
$X(((\bT/\Tso)^\sigma)^0)=\pi(\ker(\pi))=\{0\}$, whence the result.
\end{proof}
\subsection*{Centralizers of \qss\ elements}
Let as in the introduction $\sigma\in\Gun$ be a \qss\ element stabilizing the
pair $\bT\subset\bB$ (thus $\sigma\in\Tun$).

\index{Sigma@$\Sigma$, $\Sigma^\vee$}
Let  $\Sigma$  be  the  root  system  of  $\bG$  with  respect to $\bT$ and
$\Sigma^\vee$  be  the  corresponding  set of coroots. The coroot corresponding to
$\alpha\in\Sigma$ will be denoted by $\alpha^\vee$.
The  $\sigma$-stable Borel
subgroup $\bB$ determines a $\sigma$-stable order on $\Sigma$ and we denote
by  $\Sigma^+$ and $\Pi$ respectively the positive and the simple roots for
this order.
\index{Pi@$\Pi$}
The element $\sigma$ induces a permutation of $\Pi$ and $\Sigma$.

We  recall  (see for instance \cite[1.8(v)]{grnc}) that,  
if  $x_\alpha:\Ga\xrightarrow\sim\bU_\alpha$  is  an 
isomorphism  to the root subgroup of  $\bG$ attached to $\alpha$, and $\CO$
is  the orbit under $\sigma$ of $\alpha$, there is a unique 
\index{Csigma@$C_{\sigma,\CO}$}
$C_{\sigma,\CO}\in k^\times$ (depending only on $\CO$ and not on $\alpha$)
such that $\lexp{\sigma^{|\CO|}}x_\alpha(\lambda)=
x_\alpha(C_{\sigma,\CO}\lambda)$   for   any   $\lambda\in   k$.

We introduce the following terminology:
\begin{definition}
\index{special,cospecial@special, cospecial}
We say that the $\sigma$-orbit $\CO\subset\Sigma$ is {\em special} if there
exist two roots $\alpha,\beta\in\CO$ such  that  $\alpha+\beta\in\Sigma$.
In this case, we say that the orbit of $\alpha+\beta$ is {\em cospecial}.
\end{definition}

\begin{remark}\label{roots orthogonal}~
Special orbits only exist if $\bG$ has a component of type $A_{2r}$.
If $\alpha$ and $\beta $ are distinct roots in the same $\sigma$-orbit then 
$\pairing{\alpha^\vee}\beta= 0$ except if the orbit is special and 
$\alpha+\beta$ is a root, in which case $\pairing{\alpha^\vee}\beta=-1$.
\end{remark}

\begin{definition}\label{Sigmatsigma}~
\index{alphabarcheck@$\bar\alpha^\vee$}\index{O(alpha)@$\CO(\alpha)$}
Let $\CO(\alpha)$ denote the orbit of $\alpha$ under $\sigma$.
\index{Sigmasigma@$\Sigma_\sigma$, $\Sigma_\sigma^\vee$, $\Sigma_{t\sigma}$, $\Sigma_{t\sigma}^\vee$}
\begin{itemize}
\item
If $p\ne 2$ we define
$\Sigma_\sigma:=\{\pi(\alpha)\mid\alpha\in\Sigma, C_{\sigma,\CO(\alpha)}=1\}
\subset X_\sigma$. If $p=2$ we define
$\Sigma_\sigma:=\{\pi(\alpha)\mid\alpha\in\Sigma, C_{\sigma,\CO(\alpha)}=1,
\text{$\CO(\alpha)$ not special}.\}$.
\item
We define
$\Sigma_\sigma^\vee:=
\{\bar\alpha^\vee\mid \pi(\alpha)\in\Sigma_\sigma, \CO(\alpha)\text{ not
special}\}\cup
\{2\bar\alpha^\vee\mid \pi(\alpha)\in\Sigma_\sigma, \CO(\alpha)\text{
special}\} \subset Y^\sigma$
where $\bar\alpha^\vee:=\sum_{\beta\in\CO(\alpha)}\beta^\vee$.
\end{itemize}
\end{definition}
\begin{proposition}\label{root system of Gtso}~
\begin{enumerate}
\item
The  group $\Gso$ is reductive;  its root system with  respect to $\Tso$ is
$\Sigma_\sigma$, and the corresponding set of coroots is
$\Sigma_\sigma^\vee$; we denote by $W^0(\sigma)$ the corresponding Weyl group.
\item
$W^\sigma$ acts on
$X_\sigma\subset X\otimes\BQ$ and on $Y^\sigma$ by restriction.
$(W^\sigma,S_\sigma)$ is a Coxeter system where 
$S_\sigma$ is the set of $w_\CO$ where $w_\CO$ denotes the longest
element of the parabolic subgroup $W_\CO$, and $\CO$ runs over the orbits of
$\sigma$ in $S$.
\item
\index{W0(sigma)@$W^0(\sigma)$, $W^0(t\sigma)$}
We have $N_\bG(\bT)^\sigma=N_\Gs(\Tso)$ and
$W^0(\sigma)\subset W^\sigma$ as subgroups of $\GL(X_\sigma)$.
\item
The set
$\Sigma_\sigma^+:=\{\pi(\alpha)\in\Sigma_\sigma\mid \alpha \in\Sigma^+\}$ 
\index{Sigmasigma+@$\Sigma_\sigma^+$}
is a set of positive roots in $\Sigma_\sigma$ corresponding to the Borel
subgroup $(\bB^\sigma)^0$ of $\Gso$.
\end{enumerate}
\end{proposition}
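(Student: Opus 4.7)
The core is part~(1); (2)-(4) follow largely from the root-system identification. For (1), the reductivity of $\Gso$ and the fact that $\Tso$ is a maximal torus of $\Gso$ are established in \cite{grnc}; what remains is identifying the root and coroot data. The strategy is to analyze $\sigma$-fixed points orbit-by-orbit on $\Sigma$. For a $\sigma$-orbit $\CO\subset\Sigma$, I would consider the subgroup $\bU_\CO$ generated by $\{\bU_\alpha:\alpha\in\CO\}$. By Remark~\ref{roots orthogonal}, when $\CO$ is not special the $\bU_\alpha$ commute, $\bU_\CO\cong\Ga^{|\CO|}$, and $\sigma$ cyclically permutes the factors up to scalars with $\sigma^{|\CO|}$ acting as multiplication by $C_{\sigma,\CO}$, so the $\sigma$-fixed subgroup of $\bU_\CO$ is one-dimensional iff $C_{\sigma,\CO}=1$. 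The action of $\Tso$ on this fixed one-parameter subgroup is via the common restriction of the $\alpha\in\CO$, identified with $\pi(\alpha)\in X_\sigma$ by Lemma~\ref{X(Tso)}; using $\sigma$-invariance of the pairing together with Remark~\ref{roots orthogonal}, one checks that $\bar\alpha^\vee=\sum_{\beta\in\CO}\beta^\vee\in Y^\sigma$ pairs with $\pi(\alpha)$ to $2$, hence is the coroot.

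For a special orbit $\CO=\{\alpha,\sigma\alpha\}$ (only possible in type-$A_{2r}$ components), $\bU_\CO$ is a non-abelian unipotent group of type $A_2$. In $p\ne 2$ a direct computation inside $\bU_\CO$ still yields a one-parameter $\sigma$-fixed subgroup corresponding to the root $\pi(\alpha)$; however, the cospecial orbit $\{\alpha+\sigma\alpha\}$ separately contributes the root $\pi(\alpha+\sigma\alpha)=2\pi(\alpha)$, so both $\pi(\alpha)$ and $2\pi(\alpha)$ appear as roots of $\Gso$, and using $\pairing{\alpha^\vee}{\sigma\alpha}=-1$ one verifies that the correct coroot of $\pi(\alpha)$ is $2\bar\alpha^\vee$ rather than $\bar\alpha^\vee$. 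In $p=2$ the $\sigma$-fixed subgroup of $\bU_\CO$ for special $\CO$ is trivial, which is why such orbits are excluded from $\Sigma_\sigma$ in that case. That $\Gso$ is generated by $\Tso$ together with these fixed root subgroups is then verified by a dimension count.

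For (2), $W^\sigma$ acts on $X_\sigma$ and $Y^\sigma$ by restriction since $\pi$ is $W^\sigma$-equivariant (Lemma~\ref{X(Tso)}); the Coxeter structure on $(W^\sigma,S_\sigma)$ is classical (Steinberg \cite{St}): each $\sigma$-orbit $\CO$ in $S$ generates a finite parabolic $W_\CO$ of type $A_1^{|\CO|}$ or (for special $\CO$) $A_2$, whose longest element $w_\CO$ is $\sigma$-invariant, and $\{w_\CO\}$ is checked to be a Coxeter generating set of $W^\sigma$ realizing it as the reflection group of $\Sigma_\sigma$ on $X_\sigma\otimes\BQ$. For (3), the inclusion $N_\bG(\bT)^\sigma\subseteq N_\Gs(\Tso)$ is immediate; for the reverse, $\pi(\alpha)\ne 0$ for every root (positive orbit sums are positive by $\sigma$-stability of $\Sigma^+$), so $C_\bG(\Tso)=\bT$, and any $n\in N_\Gs(\Tso)$ normalizes this centralizer, hence $\bT$; then $W^0(\sigma)\subseteq W^\sigma$ follows by passing to the quotient, using $\bT\cap\Gso=\Tso$. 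For (4), $(\bB^\sigma)^0$ contains $\Tso$ and every fixed root subgroup coming from $\bU_\alpha\subset\bB$ with $\alpha\in\Sigma^+$, hence all root subgroups of $\Gso$ indexed by $\Sigma_\sigma^+$; a dimension count shows it is a Borel of $\Gso$ whose positive system is $\Sigma_\sigma^+$. The main obstacle will be the hands-on computations for special orbits and in characteristic~$2$, which require working explicitly inside the non-abelian $A_2$-type unipotent $\bU_\CO$ to pin down the fixed subgroup and to justify the factor of $2$ in the coroot.
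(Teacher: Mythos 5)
Your overall architecture (reductivity and the maximal torus from \cite{grnc}, then an orbit-by-orbit analysis of $\sigma$-fixed points on root subgroups, with an explicit computation inside an $A_2$-type subgroup for special orbits) is the same as the paper's, and your treatments of (2), (3) and (4) are fine --- your argument for (3) via $C_\bG(\Tso)=\bT$ is a legitimate substitute for the paper's citation of $N_\bG(\Tso)\subset N_\bG(\bT)$. But your handling of special orbits contains a genuine error: you assert that for a special orbit both $\pi(\alpha)$ and $2\pi(\alpha)=\pi(\alpha+\sigma\alpha)$ appear as roots of $\Gso$. That cannot happen: $\Gso$ is connected reductive, so its root system is reduced, and indeed Definition \ref{Sigmatsigma} puts at most one of the two in $\Sigma_\sigma$. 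The point you are missing is the relation between the constants on a special orbit and its cospecial orbit. Writing $[x_\alpha(\lambda),x_{\sigma\alpha}(\mu)]=x_{\alpha+\sigma\alpha}(N\lambda\mu)$ and applying $\sigma$, one gets $C_{\sigma,\CO(\alpha+\sigma\alpha)}=-C_{\sigma,\CO(\alpha)}$; since these constants are $\pm1$, exactly one of them equals $1$ when $p\ne2$, so exactly one of $\pi(\alpha)$, $2\pi(\alpha)$ contributes a root (this is precisely the orthogonal/symplectic dichotomy of Notation \ref{symplectic type}). Your claim for $p=2$ is also off for the same reason: the fixed points of $\sigma$ on $\bU_\CO=\genby{\bU_\alpha,\bU_{\sigma\alpha}}\supset\bU_{\alpha+\sigma\alpha}$ are not trivial but equal to $\bU_{\alpha+\sigma\alpha}$ (here $C_{\sigma,\CO(\alpha+\sigma\alpha)}=-C_{\sigma,\CO(\alpha)}=C_{\sigma,\CO(\alpha)}$), which is why the cospecial orbit, not the special one, supplies the root.

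Once this is corrected, the rest of your special-orbit sketch can be salvaged: when $C_{\sigma,\CO(\alpha)}=1$ and $p\ne2$ the fixed points of the Heisenberg group $\bU_\alpha\bU_{\sigma\alpha}\bU_{\alpha+\sigma\alpha}$ form the one-parameter group $\{x_\alpha(\lambda)x_{\sigma\alpha}(c\lambda)x_{\alpha+\sigma\alpha}(Nc\lambda^2/2)\}$ on which $\Tso$ acts by $\pi(\alpha)$, and your normalization argument $\pairing{\bar\alpha^\vee}{\pi(\alpha)}=1$ correctly forces the coroot to be $2\bar\alpha^\vee$, given that the reflection $s_{\pi(\alpha)}=w_{\CO(\alpha)}|_{Y^\sigma}$ negates the line through $\bar\alpha^\vee$. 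The paper instead identifies $(\bG_\alpha^\sigma)^0$ explicitly as the image of $\SL_2$ under the symmetric-square representation inside $\SL_3$ or $\PGL_3$ and reads off the coroot there; your route is shorter but you must first establish, not assume, which of the two collinear candidates is actually a root.
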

\begin{proof}
We prove (1).
The fact that $\Gso$ is reductive is in \cite[Proposition 1.8]{grnc}.
The description of its root system is in \cite[Proposition 2.1]{qss}. 
Let us describe the coroots.
Let $\bU_\alpha$ be the root subgroup of $\bG$ normalized by $\bT$ on
which $\bT$ acts by $\alpha\in\Sigma$.
For $\{\alpha\in\Sigma\mid \pi(\alpha)\in\Sigma_\sigma\}$
we may choose a set of isomorphisms $u_\alpha:\Ga\xrightarrow\sim \bU_\alpha$
such that $u_{\sigma(\alpha)}=\lexp\sigma(u_\alpha)$.
Then if $\CO(\alpha)$ is not special,  
the root subgroup of $\Gso$ on
which  $\bT^\sigma$ acts  by the  restriction of  $\alpha$ is  the image of
$u_{\pi(\alpha)}:=\prod_i u_{\sigma^i(\alpha)}$.
Then the general formula $\beta^\vee(\lambda)=
u_\beta(\lambda-1)u_{-\beta}(1)u_\beta(\lambda\inv-1)u_{-\beta}(-\lambda)$
applied with $\beta=\pi(\alpha)$ gives $\pi(\alpha)^\vee=\bar\alpha^\vee$
using that $\bU_\alpha$ and $\bU_{\pm\alpha'}$ commute for all
$\alpha\neq\alpha'$ in the same $\sigma$-orbit, by Remark \ref{roots orthogonal}.

If $\CO(\alpha)$ is special (which implies $p\ne 2$),
then $\alpha+\sigma^j(\alpha)$ is a root for
some  $j$.
As the following computation shows, the  root subgroup  of  $\Gso$
on  which  $\bT^\sigma$  acts by
$\pi(\alpha)$  is  isomorphic  to  the  fixed  points of $\sigma^j$ on the
subgroup $\genby{ \bU_\alpha, \bU_{\sigma^j(\alpha)}}$.
The subgroup $\bG_\alpha=
\genby{ \bU_{\pm \alpha},  \bU_{\pm\sigma^j(\alpha)}}$ is isomorphic to
$\SL_3$ or $\PGL_3$. For a certain choice of the morphisms
$u_{\pm(\alpha+\sigma^j(\alpha))}$   the  restriction  of  $\sigma^j$  to
$\bG_\alpha$  is the action of $\text{transpose}\circ\text{inverse}\circ\ad
\begin{pmatrix}0&0&-2\\   0&1&0\\   -2&0&0\end{pmatrix}$   on  $3\times  3$
matrices. With this model, one checks that $(\bG_\alpha^\sigma)^0$ is
isomorphic   to  the  image  of  $\SL_2$  by  the  rational  representation
expressing  the action of $\SL_2$ on homogeneous polynomials of degree two.
If  the  matrix  $\begin{pmatrix}a&b\\c&d\end{pmatrix}$  acts  by $X\mapsto
aX+cY,  Y\mapsto bX+dY$,  on the  basis $\{X^2,XY,Y^2\}$  it acts by
$\begin{pmatrix}a^2&ab&b^2\\2ac&ad+bc&2bd\\c^2&cd&d^2\end{pmatrix}$.  It is
readily  seen on this  model that the  coroot of $\Gso$,  the image of the
coroot of $\SL_2$, is equal to $2\bar\alpha^\vee$.


Let  us  prove  (2).  The  action  of  $W^\sigma$  on  $X_\sigma$ has been
explained  in Lemma \ref{X(Tso)}. The other facts about $(W^\sigma,S_\sigma)$ are
proved in \cite[1.32]{St}.

We prove (3). By \cite[1.8(iv)]{grnc} $N_\bG(\Tso)\subset N_\bG(\bT)$, thus
$N_\Gso(\Tso)$ embeds into $N_\bG(\bT)^\sigma$. Conversely a $\sigma$-fixed
element  which  normalizes  $\bT$  normalizes  $\Ts$  hence  normalizes its
connected  component $\Tso$. Whence  the first assertion  of (3), and the
fact that $W^0(\sigma)$  embeds into  $W^\sigma$. 

That   $(\bB^\sigma)^0$  is  a  Borel   subgroup  of  $\Gso$  is  \cite[1.8
(iii)]{grnc}.  The set $\Sigma_\sigma^+$ is a set of positive roots for the
order induced by $\Sigma^+$. The corresponding root subgroups are subgroups
of $\bB$ as seen in the proof of (1), whence (4).
\end{proof}
\subsection*{Connected components of centralizers}
\begin{proposition}\label{AG(ts)} We have
$\Gs/\Gso\simeq N_\bG(\bT)^\sigma/N_\Gso(\Tso)$ 
\end{proposition}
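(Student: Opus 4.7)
The plan is to mimic the standard argument that for any connected reductive group $\bH$, the component group of any subgroup containing $\bH$ as its identity component is controlled by the normalizer of a maximal torus modulo $N_\bH(\bT)$; here the group is $\Gs$ and its identity component is $\Gso$, with maximal torus $\Tso$. All ingredients we need are already available: $\Gso$ is reductive with maximal torus $\Tso$ by Proposition \ref{root system of Gtso}(1), maximal tori of a connected reductive group are conjugate, and $N_\bG(\bT)^\sigma = N_\Gs(\Tso)$ by Proposition \ref{root system of Gtso}(3).

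The first step is to prove $\Gs=\Gso\cdot N_\Gs(\Tso)$. Pick $g\in\Gs$. Since $g$ commutes with $\sigma$, conjugation by $g$ preserves the fixed-point set $\bG^\sigma$ and, by continuity, its identity component $\Gso$; in particular $g\Tso g\inv$ is a connected subtorus of $\Gso$ of the same dimension as $\Tso$, hence a maximal torus of $\Gso$. By conjugacy of maximal tori inside the connected reductive group $\Gso$, there is $h\in\Gso$ with $h\inv(g\Tso g\inv)h=\Tso$, so that $h\inv g\in N_\Gs(\Tso)$ and $g=h\cdot(h\inv g)\in\Gso\cdot N_\Gs(\Tso)$.

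The second step is a direct application of the second isomorphism theorem: from $\Gs=\Gso\cdot N_\Gs(\Tso)$ one deduces
\[
\Gs/\Gso\simeq N_\Gs(\Tso)/\bigl(N_\Gs(\Tso)\cap\Gso\bigr)=N_\Gs(\Tso)/N_\Gso(\Tso),
\]
the last equality being tautological since an element of $\Gso$ normalizes $\Tso$ if and only if it does so as an element of $\Gs$. Finally we invoke the identification $N_\Gs(\Tso)=N_\bG(\bT)^\sigma$ from Proposition \ref{root system of Gtso}(3) to rewrite the numerator, obtaining the stated isomorphism.

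There is no serious obstacle: the only nontrivial ingredients are the reductivity of $\Gso$ and the identification of its normalizer inside $\Gs$ with $N_\bG(\bT)^\sigma$, both already recorded. The one point that deserves care is verifying that $g$ actually normalizes $\Gso$ (so that $g\Tso g\inv$ lies in $\Gso$ and is a maximal torus there); this is precisely where the $\sigma$-invariance of $g$ is used, since otherwise conjugation need not preserve the fixed-point subgroup.
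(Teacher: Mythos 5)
Your proof is correct, but it takes a genuinely different route from the paper's. You establish the key decomposition $\Gs=\Gso\cdot N_\Gs(\Tso)$ by the classical normalizer argument: $g\in\Gs$ normalizes the identity component $\Gso$, hence sends the maximal torus $\Tso$ to another maximal torus of $\Gso$, and conjugacy of maximal tori in the connected group $\Gso$ produces the required correction $h\in\Gso$; the second isomorphism theorem and the identification $N_\Gs(\Tso)=N_\bG(\bT)^\sigma$ from Proposition \ref{root system of Gtso}(3) then finish the job. The paper instead writes an arbitrary $x\in\Gs$ in its Bruhat normal form $x=ut_1\dot wu_1$ relative to the $\sigma$-stable pair $\bT\subset\bB$ and uses the uniqueness of that decomposition to conclude that $u,u_1\in\bU^\sigma\subset\Gso$ and $t_1\dot w\in N_\bG(\bT)^\sigma$, so every class of $\Gs/\Gso$ already has a representative in $N_\bG(\bT)^\sigma$. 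Both arguments are sound and both yield the statement (and both yield the auxiliary fact, reused later in the proof of Proposition \ref{quasi-isolated}, that $\Gs$ is generated by $\Gso$ and $N_\bG(\bT)^\sigma$). Your approach is the more general one --- it needs only that $\Gso$ is connected (conjugacy of maximal tori) and nothing about Borel subgroups --- while the paper's Bruhat argument is more explicit, exhibiting concrete $\sigma$-fixed representatives $t_1\dot w$ and making visible the inclusion $\bU^\sigma\subset\Gso$, which is in the same spirit as the surrounding Steinberg-style analysis.
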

\begin{proof}

We first show that  $\Gs$ is generated by $\Gso$ and
$\sigma$-stable representatives in $N_\bG(\bT)$ of $W^\sigma$. A
double  coset  $\bB  w\bB$ containing an $x\in\Gs$ is $\sigma$-stable,
hence $w$ is
$\sigma$-stable.  Choose a  representative  $\dot w  $ of $w$ and
write  $x$ uniquely as  $x=ut_1\dot w u_1$  with $u\in\bU\cap\lexp w\bU^-$,
$t_1\in\bT$ and $u_1\in\bU$, where $\bU$ is the unipotent radical of $\bB$.
By uniqueness, $u$ and $u_1$ are in $\bU^\sigma\subset\Gso$ (the inclusion
by \cite[1.8(i)]{grnc}) and $t_1\dot w$ is $\sigma$-stable, so we get that
$\Gs/\Gso$ has representatives in $N_\bG(\bT)^\sigma$.

Since by Proposition \ref{root system of Gtso}(3),
we have $N_\bG(\bT)^\sigma\cap\Gso=N_\Gso(\Tso)$, we get the proposition.
\end{proof}

\begin{notation}
\index{W(sigma)@$W(\sigma)$, $W(t\sigma)$}
Let $W(\sigma):=\{w\in W^\sigma\mid [\sigma,\dot w]\in[\bT,\sigma]\}$ for some
representative $\dot w$ of $w$.
\end{notation}
Note that the condition does not depend on the representative $\dot w$ of
$w$ chosen.

\begin{proposition}\label{exactsequence} The natural morphisms give an exact sequence
$$1\rightarrow \Ts/\Tso\rightarrow
N_\bG(\bT)^\sigma/N_\Gso(\Tso)\rightarrow
W(\sigma)/W^0(\sigma)\rightarrow 1.$$
\end{proposition}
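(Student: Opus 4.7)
The plan is to promote the inclusion $N_\Gso(\Tso)\subset N_\bG(\bT)^\sigma$ (from Proposition \ref{root system of Gtso}(3)) into a morphism of short exact sequences, and then take the quotient. Specifically, I would first establish the short exact sequence
$$
1\to\Ts\to N_\bG(\bT)^\sigma\to W(\sigma)\to 1,
$$
obtained by restricting the natural projection $N_\bG(\bT)\to W$. This restricts on $N_\Gso(\Tso)$ to the defining sequence $1\to\Tso\to N_\Gso(\Tso)\to W^0(\sigma)\to 1$, so the componentwise quotient yields the sequence claimed.

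The kernel identification is immediate: $N_\bG(\bT)^\sigma\cap\bT=\Ts$, and, using that $\Tso$ is a maximal torus of the reductive group $\Gso$ (so its centralizer in $\Gso$ is itself, whence $\bT\cap\Gso=\Tso$), one gets $N_\Gso(\Tso)\cap\Ts=\Tso$. The heart of the argument is to show that the image of $N_\bG(\bT)^\sigma$ in $W$ is exactly $W(\sigma)$. Containment in $W(\sigma)$ is clear: any $n\in N_\bG(\bT)^\sigma$ is a $\sigma$-fixed lift of its image $w\in W^\sigma$, so $[\sigma,n]=1\in[\bT,\sigma]$, and $w\in W(\sigma)$ by the representative-independence noted after the definition. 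For the converse, given $w\in W(\sigma)$ and any representative $\dot w\in N_\bG(\bT)$, one computes in the ambient group, using that $[\sigma,\dot w]\in\bT$ and that $\bT$ is abelian,
$$
[\sigma,s\dot w]=\lexp\sigma s\cdot[\sigma,\dot w]\cdot s\inv=[\sigma,\dot w]\cdot[\sigma,s]\qquad(s\in\bT).
$$
Since $[\bT,\sigma]=\{\lexp\sigma s\cdot s\inv\mid s\in\bT\}=\{[\sigma,s]\mid s\in\bT\}$ and $[\sigma,\dot w]\in[\bT,\sigma]$ by hypothesis, one may choose $s$ so that $s\dot w$ is $\sigma$-fixed; hence $w$ lies in the image.

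The main obstacle is the surjectivity onto $W(\sigma)$, which rests on the commutator calculation above combined with the description of $[\bT,\sigma]$; the remainder is bookkeeping, namely injectivity of $\Ts/\Tso\hookrightarrow N_\bG(\bT)^\sigma/N_\Gso(\Tso)$ from $\Tso=\Ts\cap N_\Gso(\Tso)$, and exactness in the middle from the snake lemma applied to the two parallel short exact sequences (or, equivalently, from the observation that $\Ts\cdot N_\Gso(\Tso)$ is the preimage of $W^0(\sigma)\subset W(\sigma)$).
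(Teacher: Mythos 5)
Your proposal is correct and follows essentially the same route as the paper: both proofs hinge on the observation that $w\in W(\sigma)$ precisely when $w$ admits a $\sigma$-fixed representative (your commutator computation $[\sigma,s\dot w]=[\sigma,\dot w]\cdot[\sigma,s]$ is the same manipulation the paper writes as $\lexp\sigma{\dot w}\dot w\inv=\lexp\sigma t_1\inv t_1\Leftrightarrow\lexp\sigma(t_1\dot w)=t_1\dot w$), yielding the sequence $1\to\Ts\to N_\bG(\bT)^\sigma\to W(\sigma)\to 1$, which is then quotiented by the corresponding sequence for $\Gso$. The only cosmetic difference is that you identify $\Ts\cap N_\Gso(\Tso)=\Tso$ via the centralizer of the maximal torus $\Tso$ in $\Gso$, where the paper instead intersects with $\Gso$ using $N_\bG(\bT)^\sigma\cap\Gso=N_\Gso(\Tso)$ from its Proposition on $\Gs/\Gso$.
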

\begin{proof}
Observe  first that $W(\sigma)$ is the set of elements of $W^\sigma$ which
have  representatives in $N_\bG(\bT)^\sigma$ (thus in $N_\Gs(\Tso)$);
indeed $[\sigma,\dot w]\in[\bT,\sigma]$ can be written $\lexp\sigma{\dot w} \dot
w\inv=\lexp\sigma t_1\inv t_1$ for some $t_1\in\bT$ which is equivalent to
$\lexp\sigma(t_1\dot w)=t_1\dot w$.

This observation can be rephrased as the exact sequence
$$1\rightarrow \Ts\rightarrow N_\bG(\bT)^\sigma\rightarrow
W(\sigma)\rightarrow1.$$

By Proposition \ref{AG(ts)} we have $N_\bG(\bT)^\sigma\cap\Gso =N_\Gso(\Tso)$ which is a
normal  subgroup  of  $N_\bG(\bT)^\sigma$.  Taking  the  intersections with
$\Gso$  and  using  that  the  image  in  $W(\sigma)$  of $N_\Gso(\Tso)$ is
$W^0(\sigma)$, we get the exact sequence of the proposition.
\end{proof}

\subsection*{Classification of \qss\ elements}
We want to study $\bG$-orbits (conjugacy classes) of \qss\ 
elements of $\Gun$, and their centralizers in $\bG$. 
We say that two \qss\ elements have {\em same type} if they have
conjugate centralizers, and we want to classify the possible types of \qss\
elements. We can reduce this study to elements of
finite order, thanks to the:

\begin{lemma}\label{finite order}
For any $\sigma\in \Tun$ there exists an element $\sigma'\in \sigma.\Tso$ of
finite order such that $\Gs=\bG^{\sigma'}$.
\end{lemma}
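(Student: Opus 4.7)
Plan:

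I propose to produce $\sigma'$ as a torsion element of the Zariski closure $\mathbf{D}:=\overline{\langle\sigma\rangle}$ taken in the ambient algebraic group, restricted to the appropriate coset. Since $\sigma$ is quasi-semisimple (hence semisimple in the ambient group), $\mathbf{D}$ is a commutative diagonalizable algebraic subgroup, so $\mathbf{D}^0$ is a torus. I first observe that $\mathbf{D}^0\subset\Tso$: indeed $\mathbf{D}^0$ commutes with $\sigma$, so lies in $\Gs$ and (being connected) in $\Gso$; conversely, any $g\in\Gs$ commutes with every power of $\sigma$ and hence with $\mathbf{D}$, so $\mathbf{D}^0\subset Z(\Gs)^0\subset Z(\Gso)$, and the latter lies in every maximal torus of $\Gso$, in particular in $\Tso$ (a maximal torus by Proposition \ref{root system of Gtso}).

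Consider $V:=\{d\in\mathbf{D}\mid\operatorname{Lie}(\bG^d)=\operatorname{Lie}(\bG^{\mathbf{D}})\}$. Decomposing $\operatorname{Lie}(\bG)$ into weight spaces under the diagonalizable group $\mathbf{D}$, $V$ is the complement in $\mathbf{D}$ of the finite union $\bigcup_{\chi\neq 0}\ker\chi$, where $\chi$ runs over the non-zero weights on $\operatorname{Lie}(\bG)$; so $V$ is Zariski-open. Since $\bG^\sigma=\bG^{\mathbf{D}}$ (any element commuting with $\sigma$ commutes with the closure $\mathbf{D}$, and $\sigma\in\mathbf{D}$), we have $\sigma\in V$, so $V$ is non-empty and hence dense in $\mathbf{D}$. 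Then $V\cap\sigma\mathbf{D}^0$ is dense open in the irreducible coset $\sigma\mathbf{D}^0$, while torsion elements are Zariski-dense in each coset of $\mathbf{D}^0$ (by density of torsion in the torus $\mathbf{D}^0$). I pick a torsion $\sigma'\in V\cap\sigma\mathbf{D}^0$; setting $s:=\sigma\inv\sigma'\in\mathbf{D}^0\subset\Tso$ gives $\sigma'=\sigma s\in\sigma\Tso$ of finite order.

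It remains to show $\Gs=\bG^{\sigma'}$. The inclusion $\Gs=\bG^{\mathbf{D}}\subset\bG^{\sigma'}$ is immediate from $\sigma'\in\mathbf{D}$. For the reverse, $\sigma'\in V$ yields $(\bG^{\sigma'})^0=(\bG^{\mathbf{D}})^0=\Gso$, so the two groups share identity component, and only the component groups need comparison. Since $s\in\bT$ acts trivially by conjugation on $\bT$, $\sigma$ and $\sigma'$ induce the same automorphism of $\bT$; hence $\bT^{\sigma'}=\Ts$, $[\bT,\sigma']=[\bT,\sigma]$, and $W^{\sigma'}=W^\sigma$. The commutator identity $[\sigma',\dot w]=\sigma\bigl([s,\dot w]\bigr)\cdot[\sigma,\dot w]$ together with $[s,\dot w]=1$ for any $\dot w\in N_\bG(\bT)^\sigma\subset\Gs$ (using $s\in Z(\Gs)^0$) forces $W(\sigma')=W(\sigma)$; combined with $W^0(\sigma')=W^0(\sigma)$ (from equality of identity components), Propositions \ref{AG(ts)} and \ref{exactsequence} give $|\bG^{\sigma'}/\Gso|=|\Gs/\Gso|$. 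Together with $\Gs\subset\bG^{\sigma'}$ this forces equality.

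The main obstacle is the final comparison of component groups. The centrality of $s\in\mathbf{D}^0\subset Z(\Gs)^0$ in $\Gs$ is crucial: it trivializes the correction term $[s,\dot w]$ and thereby allows Proposition \ref{exactsequence} to transfer the $W(\sigma)$-data from $\sigma$ to $\sigma'$; without this centrality property, $W(\sigma')$ and $W(\sigma)$ could genuinely differ even when $\sigma$ and $\sigma'$ have the same action on $\bT$.
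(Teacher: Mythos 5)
Your construction founders at its very first step: the assertion that a \qss\ element is semisimple in the ambient group is false in positive characteristic, and the paper needs this lemma precisely in that generality. If $p=2$, for instance, the quasi-central element inducing the graph automorphism of $\SL_{2r+1}$, or the swap of the two factors of $\bH\times\bH$, lies in $\Tun$ but has order $2=p$ and is therefore unipotent; this is exactly the situation the paper must cover when it uses the lemma to produce finite-order quasi-central elements. For such $\sigma$ (or for $\sigma=t\sigma_0$ with $t$ of infinite order and $\sigma_0$ as above) the closure $\mathbf{D}=\overline{\langle\sigma\rangle}$ is not diagonalizable, $\operatorname{Lie}(\bG)$ has no weight decomposition under $\mathbf{D}$, your set $V$ is not the complement of kernels of characters, and any torsion element of the coset $\sigma\mathbf{D}^0$ has order divisible by $p$, hence is not semisimple either. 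The identity $\operatorname{Lie}(\bG^{\sigma'})=\operatorname{Lie}(\bG)^{\sigma'}$, on which the key equality $(\bG^{\sigma'})^0=\Gso$ rests, then fails in general: already for the graph automorphism of $\SL_3$ in characteristic $2$ one has $\dim\operatorname{Lie}(\bG)^{\sigma}\ge 4>3=\dim\Gs$, so the dimension count collapses. This is why the paper's proof takes the closure $\bS$ of $\langle\sigma\rangle$ inside $\langle\sigma\rangle\cdot\Tso$ and delegates to Bonnaf\'e's Lemma 1.6, whose argument needs only the \emph{identity component} $\bS^0\subset\Tso$ to be diagonalizable and handles the finite quotient $\bS/\bS^0$ by a group-theoretic device rather than a Lie-algebra one. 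In characteristic $0$, or whenever the image of $\sigma$ in $\mathbf{D}/\mathbf{D}^0$ has order prime to $p$, your argument is essentially sound and is a legitimate alternative to citing Bonnaf\'e.

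There is also a smaller logical slip in the component-group comparison. You establish $[s,\dot w]=1$ only for $\sigma$-fixed representatives $\dot w$, which yields $W(\sigma)\subseteq W(\sigma')$ --- but that inclusion is already free from $\Gs\subseteq\bG^{\sigma'}$; what the order count requires is the reverse inclusion $W(\sigma')\subseteq W(\sigma)$, and for $w\in W(\sigma')$ having no $\sigma$-fixed representative your commutator identity says nothing. The repair is to push $\mathbf{D}^0$ further into the center: since $\mathbf{D}^0\subseteq Z(\Gs)^0\subseteq Z^0(\Gso)=\Rad(\Gso)\subseteq Z(\bG)$ by \cite[1.25]{grnc} (cf.\ Proposition \ref{Gs ss}), the element $s$ is central in $\bG$, so $[s,\dot w]=1$ for \emph{every} $\dot w\in N_\bG(\bT)$. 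In fact, once $s\in Z(\bG)$ one gets $\bG^{\sigma s}=\bG^{\sigma}$ directly, and the entire second half of your argument becomes unnecessary.
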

\begin{proof}
Let $\bS$ be the Zariski closure of $\genby\sigma$ in $\genby\sigma
.\Tso$. We have $\bS^0\subset\Tso$,
so $\bS^0$ is diagonalizable, and $\bS/\bS^0$ is generated by the
image of $\sigma$. By the proof of \cite[Lemma 1.6]{cedric}, which uses
only that $\bS^0$ is diagonalizable, the generators of $\bS/\bS^0$ can be lifted to finite
order elements of $\bS$ and for any such $d\in\bS$, there exists $t\in
\bS^0$ such that  $td$ has finite order and satisfies
$C_\bG(td)=C_\bG(\bS)=\Gs$. If we take $d$ lifting $\sigma$ we get
a finite order element $\sigma'=td\in \sigma.\Tso$, which has same centralizer as $\sigma$.
\end{proof}

To classify \qss\ classes, we can fix an element
$\sigma\in\Tun$ of finite order and write the others as $t\sigma$ with $t\in\bT$.
Note that two elements $t\sigma$, $t\sigma'$ of $\Tun$ are $\bG$-conjugate
if and only if $t$ and $t'$ are $\sigma$-conjugate under $\bG$, where
$\sigma$-conjugacy is the ``twisted conjugacy'' $t\mapsto g\inv t\sigma(g)$.
\begin{proposition}\label{omnibus}~
\begin{enumerate}
\item Quasi-semisimple classes of $\Gun$ have representatives in 
$\Tso\cdot\sigma$.
\item  
The $\bG$-conjugacy class of $t\sigma\in\Tso\cdot\sigma$ (or equivalently the
$\sigma$-conjugacy class of $t$) is parameterized by the
$W^\sigma$-orbit of the image of $t$ in
$\LT=\Tso/([\bT,\sigma]\cap\Tso)$; the group $W^\sigma$ acts on 
$\LT$ since it stabilizes $[\bT,\sigma]$.
\item
Given $t\in\Tso$, there exists $t'\in\Tso$, of finite order, such that 
$C_\bG(t'\sigma)=C_\bG(t\sigma)$.
\end{enumerate}
\end{proposition}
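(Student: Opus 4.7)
The plan for (1) is to use that every quasi-semisimple class in $\Gun$ has a representative of the form $t\sigma$ with $t\in\bT$, since all $\sigma$-stable pairs $\bT\subset\bB$ are $\bG$-conjugate. By Lemma \ref{A(Ts)}(3), $\bT=[\bT,\sigma]\cdot\Tso$, so I can write $t=u\sigma(u^{-1})\cdot t_2$ with $u\in\bT$ and $t_2\in\Tso$. A direct calculation using the commutativity of $\bT$ then gives $u^{-1}(t\sigma)u=t_2\sigma$, exhibiting $t\sigma$ as $\bG$-conjugate to $t_2\sigma\in\Tso\cdot\sigma$.

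For (3), I would apply Lemma \ref{finite order} directly to $t\sigma\in\Tun$. Since $t\in\bT$ centralizes the abelian $\bT$, we have $\bT^{t\sigma}=\bT^\sigma$, so the torus $(\bT^{t\sigma})^0$ appearing in that lemma coincides with $\Tso$. The lemma then produces a finite-order element $(t\sigma)\cdot s=(ts)\sigma$ with $s\in\Tso$ having the same $\bG$-centralizer as $t\sigma$, and I set $t'=ts\in\Tso$.

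For (2), the $W^\sigma$-action on $\LT$ is well-defined since $W^\sigma$ commutes with $\sigma$ and preserves $[\bT,\sigma]$. In the easy direction, given $w\in W^\sigma$ with $w[t]=[t']$, I would write $w(t)t'^{-1}=s\sigma(s^{-1})\in[\bT,\sigma]$ and combine $s$ with a lift $\dot w$ of $w$ to produce explicitly a $g\in\bG$ realizing $g^{-1}t\sigma(g)=t'$. For the converse, given $g\in\bG$ with $g^{-1}t\sigma(g)=t'$, my plan is to apply the Bruhat decomposition and write $g\in\bB\dot w\bB$; the identity $\sigma(g)=t^{-1}gt'$ forces $\bB\dot w\bB$ to be $\sigma$-stable, hence $w\in W^\sigma$. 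Following the scheme of the proof of Proposition \ref{AG(ts)}, the unipotent Bruhat factors lie in $\bU^\sigma\subset\Gso$ and can be absorbed, reducing $g$ to an element of $\bT\dot w$. Since $\bT$-twisted conjugation shifts $t$ only by elements of $[\bT,\sigma]$, its effect on $\LT$ is trivial, and the net action on $[t]$ comes from $w$ alone.

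The main technical obstacle is the Bruhat reduction in (2): the twisted-conjugation analogue of the uniqueness argument in Proposition \ref{AG(ts)}. A delicate point is that a lift $\dot w\in N_\bG(\bT)$ of a general $w\in W^\sigma$ need not be $\sigma$-fixed, so a correction $\sigma(\dot w)\dot w^{-1}\in\bT$ enters the computation and must be controlled modulo $[\bT,\sigma]$ in order to recover the natural $W^\sigma$-action on $\LT$ rather than a twisted variant. Additional care is needed to check that the $\bG$-flexibility beyond $N_\bG(\bT)$ is compatible with this reduction so that every $W^\sigma$-orbit on $\LT$ is realized.
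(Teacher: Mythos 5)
Your treatments of (1) and (3) are correct and match what the paper does. For (3) you apply Lemma \ref{finite order} to $t\sigma$, observing $(\bT^{t\sigma})^0=\Tso$, exactly as in the paper's proof; you should only add the one\nobreakdash-line remark (which the paper makes) that since $\sigma$ has finite order and commutes with $t'\in\Tso$, finiteness of the order of $t'\sigma$ is equivalent to finiteness of the order of $t'$. For (1), your computation $u\inv(t\sigma)u=t_2\sigma$ from the decomposition $\bT=[\bT,\sigma]\cdot\Tso$ of Lemma \ref{A(Ts)}(3) is correct and is precisely the argument the paper delegates to the citation of \cite[7.1]{grnccomp}; writing it out is a genuine improvement in self-containedness.

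The gap is in (2), and it sits exactly where you flag it. (The paper gives no internal proof either; it cites \cite[7.1, 7.2]{grnccomp}.) Your outline does not resolve the difficulty you name, and that difficulty is not a bookkeeping matter. For $w\in W^\sigma$ with lift $\dot w$, twisted conjugation by an element of $\bT\dot w$ sends $\bar t\in\LT$ to $w\inv(\bar t)\cdot\overline{\dot w\inv\sigma(\dot w)}$; the class of $\dot w\inv\sigma(\dot w)=w\inv([\sigma,\dot w])$ modulo $[\bT,\sigma]$ is independent of the lift and is trivial precisely when $w\in W(\sigma)$. So your ``easy direction'' produces the required $g$ only for $w\in W(\sigma)$, and your Bruhat reduction in the converse direction, even if completed along the lines of Proposition \ref{AG(ts)}, would show that the reachable classes from $\bar t$ are the $w(\bar t)\cdot\overline{[\sigma,\dot w]}^{\pm1}$ rather than the plain $W^\sigma$-orbit. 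By Proposition \ref{Wsigma=Wosigma} and the Remark following Proposition \ref{W(t sigma)}, $W(\sigma)$ can be a proper subgroup of $W^\sigma$ for a quasi-semisimple $\sigma$ that is not quasi-central, so the cocycle $w\mapsto\overline{[\sigma,\dot w]}$ cannot simply be ignored: showing that it does not disturb the stated parameterization is the entire content of (2) beyond the formal manipulations, and it is the step you explicitly leave open. To close this you must either import the content of \cite[7.1, 7.2]{grnccomp} as the paper does, or supply an argument controlling $\overline{[\sigma,\dot w]}$ --- for instance by reducing to the situation of Context \ref{fix qss}, where $\sigma$ is quasi-central and $W(\sigma)=W^\sigma$, which is the only setting in which the rest of the paper actually uses (2).
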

\begin{proof}
(1) and (2) are explained in \cite[7.1, 7.2]{grnccomp}. They use that
$\bT=[\bT,\sigma]\cdot\Tso$.

For (3) we use Lemma \ref{finite order} 
which constructs a $t'\sigma\in\sigma\Tso$ of finite
order such that $C_\bG(t'\sigma)=C_\bG(t\sigma)$. Since $\sigma$ commutes
to $t'$ and is of finite order, it is equivalent that $t'\sigma$ be of 
finite order or $t'$ be of finite order.
\end{proof}

By  Proposition \ref{omnibus}(1),    \qss\    classes   have   representatives
$t\sigma\in\Tso\cdot\sigma$  where $\Tso$ is a  maximal torus of $\Gtso$ as
well  as of  $\Gso$. 

In the following, we will describe
representatives  of the  conjugacy classes  of \qss\ elements of
finite  order and  describe their  centralizer, 
which by Proposition \ref{omnibus}(3) suffices to
classify all types of \qss\ elements.

\begin{theorem}\label{parametrization}
\index{Wtilde@$\tW$}
The $\bG$-conjugacy classes of \qss\ elements of finite order in
$\Gun$  are  parameterized  by  the  orbits in $Y_\sigma\otimes\BQ_{p'}$ of 
the  extended affine Weyl group  $\tW:=Y_\sigma\rtimes W^\sigma$, where the
action of $W^\sigma$ is as in Proposition \ref{omnibus}(2).

To describe this parameterization, we fix an isomorphism
$(\BQ/\BZ)_{p'}\xrightarrow\sim \mu(k)$, which lifts to a map
$\iota:\BQ_{p'}\to\mu(k)$,
where $\mu(k)$ is the group of roots of unity in $k$. Then 
the $\tW$-orbit of $\lambda\otimes x$, where $\lambda\in Y_\sigma$ and 
$x\in\BQ_{p'}$, defines $\lambda(\iota(x))\in\LT$ whose preimages
in $\Tso$ are $\sigma$-conjugate.
\end{theorem}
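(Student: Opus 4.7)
My plan is to combine Proposition~\ref{omnibus} (which parameterizes \qss\ classes of $\Gun$ by $W^\sigma$-orbits on $\LT$) with the standard description of the $p'$-torsion subgroup of a torus, and then bundle the translations with the $W^\sigma$-action to obtain an $\tW$-action.

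First I would reduce to working with $p'$-torsion in $\LT$. By Proposition~\ref{omnibus}(1) every \qss\ class has a representative $t\sigma$ with $t\in\Tso$, and since $t$ commutes with $\sigma$ and $\sigma$ has finite order, $t\sigma$ has finite order iff $t$ does. As $k$ is algebraically closed of characteristic $p$, the group $k^\times$ has no $p$-torsion, hence neither does any $k$-torus, so these finite-order elements are precisely the $p'$-torsion elements. Because $\bT=[\bT,\sigma]\cdot\Tso$, the map $\Tso\to\LT$ is surjective with finite kernel $[\bT,\sigma]\cap\Tso$ (Lemma~\ref{A(Ts)}(3)); thus the $p'$-torsion of $\Tso$ surjects onto that of $\LT$. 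Combined with Proposition~\ref{omnibus}(2), the classification reduces to describing the $W^\sigma$-orbits on the $p'$-torsion subgroup $(\LT)_{p'}$.

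Next I would apply the standard torus duality. For any torus $\bT'$ over $k$, evaluation $y\otimes\zeta\mapsto y(\zeta)$ gives a natural isomorphism $Y(\bT')\otimes_\BZ\mu(k)_{p'}\xrightarrow\sim(\bT'(k))_{p'}$. Applying this to $\bT'=\LT$, using $Y(\LT)\simeq Y_\sigma$ from Lemma~\ref{X(T/LT)}(2), and replacing $\mu(k)_{p'}$ by $(\BQ/\BZ)_{p'}$ via the fixed $\iota$, one obtains
$$(\LT)_{p'}\;\simeq\; Y_\sigma\otimes_\BZ(\BQ/\BZ)_{p'}\;=\;(Y_\sigma\otimes_\BZ\BQ_{p'})/Y_\sigma,$$
under which the class of $\lambda\otimes x$ corresponds to $\lambda(\iota(x))\in\LT$ when $\lambda$ is viewed as a cocharacter of $\LT$. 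The action of $W^\sigma$ on $\LT$, well-defined since $[\bT,\sigma]$ is $W^\sigma$-stable by Proposition~\ref{omnibus}(2), transports to the natural action of $W^\sigma$ on $Y_\sigma$ extended $\BQ_{p'}$-linearly, by functoriality of $Y$.

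To conclude, taking $W^\sigma$-orbits on the quotient $(Y_\sigma\otimes\BQ_{p'})/Y_\sigma$ amounts by definition to taking orbits of the semidirect product $\tW=Y_\sigma\rtimes W^\sigma$ on $Y_\sigma\otimes\BQ_{p'}$, which is the first assertion. For the final statement, given $\lambda\otimes x$, any $\Tso$-preimage $t$ of $\lambda(\iota(x))$ furnishes a representative $t\sigma$; two such preimages differ by an element of $[\bT,\sigma]\cap\Tso$ of the form $s\sigma(s^{-1})$ for some $s\in\bT$, and since $\bT$ is abelian the $\sigma$-conjugation $u\mapsto s^{-1}u\sigma(s)$ sends one preimage to the other. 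I do not anticipate any real obstacle: Proposition~\ref{omnibus} and the dualities of Section~1 do the main work, and the rest is the standard $p'$-torsion formula for a torus together with a routine verification of $W^\sigma$-equivariance.
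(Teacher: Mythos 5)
Your proposal is correct and follows essentially the same route as the paper: reduce via Proposition~\ref{omnibus} to $W^\sigma$-orbits on the finite-order (equivalently $p'$-torsion) points of $\LT$, identify these with $Y(\LT)\otimes(\BQ/\BZ)_{p'}=Y_\sigma\otimes(\BQ/\BZ)_{p'}$ by the standard evaluation isomorphism, and observe that orbits of $W^\sigma$ on this quotient are exactly $\tW$-orbits on $Y_\sigma\otimes\BQ_{p'}$. Your explicit verification that two $\Tso$-preimages are $\sigma$-conjugate (differing by $s\sigma(s^{-1})\in[\bT,\sigma]\cap\Tso$) is a correct unwinding of what the paper leaves implicit in Proposition~\ref{omnibus}(2).
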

\begin{proof}
By   Proposition \ref{omnibus}(2),   the   \qss\   classes   of  $\Gun$  are
parameterized  by the $W^\sigma $-orbits  in $\LT$. The map $\lambda\otimes
x\mapsto \lambda(\iota(x))$ defines an isomorphism from
$Y(\LT)\otimes(\BQ/\BZ)_{p'}$  to the  group of  points of  finite order of
$\LT$   (see  for instance \cite[0.20]{book}).   Since   $Y(\LT)$  is  equal  to
$Y_\sigma$  by Lemma  \ref{X(T/LT)} we  get that  the \qss\ 
conjugacy  classes  of  finite  order  are  parameterized  by  elements  of
$Y_\sigma\otimes\BQ_{p'}$  up to action of $W^\sigma$ and translation by
$Y_\sigma$. Whence the result.
\end{proof}
Note that since the exponent of $[\bT,\sigma]\cap\Tso$  divides that of
$\sigma$ (see Lemma \ref{A(Ts)}(3)), this group is formed of elements of
finite order, thus identifies with a subset of $Y^\sigma\otimes\BQ/\BZ$.

\begin{proposition}\label{Zso=1}~
\begin{enumerate}
\item  $Z(\bG)^0\subset[\bT,\sigma]$ if and only if $\Zso=\{1\}$.
\item If the conditions of (1) hold the \qss\ $\bG$-orbits
of $\Gun$ and the \qss\ $\bG/Z^0(\bG)$-orbits
of $\Gun/Z^0(\bG)$ are in bijection by the quotient map.
This bijection preserves $W(\sigma)$ and $W^0(\sigma)$.
\end{enumerate}
\end{proposition}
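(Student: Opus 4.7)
The plan for (1) is to apply Lemma~\ref{A(Ts)}(3) to the torus $Z(\bG)^0$ with its $\sigma$-action. Since $Z(\bG)/Z(\bG)^0$ is finite, one identifies $\Zso = (Z(\bG)^\sigma)^0$ with $(Z(\bG)^0)^{\sigma,0}$; the lemma then decomposes $Z(\bG)^0 = [Z(\bG)^0,\sigma]\cdot \Zso$ with finite intersection, and $[Z(\bG)^0,\sigma]\subset[\bT,\sigma]$ since $Z(\bG)^0\subset\bT$. If $\Zso=\{1\}$ this immediately gives $Z(\bG)^0\subset[\bT,\sigma]$; conversely, if $Z(\bG)^0\subset[\bT,\sigma]$ then $\Zso\subset[\bT,\sigma]\cap\Ts$, which is finite by Lemma~\ref{A(Ts)}(3), so the connected $\Zso$ must be trivial.

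For (2), the key observation is that the data parameterizing \qss\ classes is visibly preserved by the quotient by $Z^0(\bG)$ under the hypothesis of (1). Writing $\bG':=\bG/Z^0(\bG)$ and $\bT':=\bT/Z^0(\bG)$, commutators pass to quotients, so
\[
\bT'/[\bT',\sigma] = \bT/\bigl([\bT,\sigma]\cdot Z^0(\bG)\bigr),
\]
which under the hypothesis equals $\LT$. The group $W^\sigma$ is unaffected by the central quotient, so Proposition~\ref{omnibus}(2) parameterizes both the \qss\ $\bG$-orbits of $\Gun$ and the \qss\ $\bG'$-orbits of $\Gun/Z^0(\bG)$ by $W^\sigma$-orbits in the same $\LT$. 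For $t\sigma\in\Tso\cdot\sigma$ the image $\bar t\bar\sigma$ has the same parameter in this common $\LT$, so the bijection is induced by the quotient map.

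To check that $W(\sigma)$ is preserved: the defining condition $[t\sigma,\dot w]\in[\bT,\sigma]$ for $W(t\sigma)$ in $\bG$ translates after quotient to $[t\sigma,\dot w]\in[\bT,\sigma]\cdot Z^0(\bG)$ for $W(\bar t\bar\sigma)$ in $\bG'$, and the hypothesis makes the two conditions coincide. For $W^0(\sigma)$, the Weyl group depends only on the root system $\Sigma$, the $\sigma$-orbits on it, and the constants $C_{t\sigma,\CO}$ of Definition~\ref{Sigmatsigma}; all three survive the central quotient (roots vanish on $Z(\bG)$, orbits are unchanged, and central elements act trivially on root subgroups), so $W^0(t\sigma)$ is the same for $\bG$ and $\bG'$.

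The main risk is purely bookkeeping: verifying the identification $\Zso = (Z(\bG)^0)^{\sigma,0}$ for (1) and the equality $[\bT,\sigma]\cdot Z^0(\bG) = [\bT,\sigma]$ under the hypothesis for (2). Both are routine consequences of Lemma~\ref{A(Ts)}(3) and the finiteness of $Z(\bG)/Z(\bG)^0$.
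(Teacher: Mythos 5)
Your proof is correct. Part (1) and the preservation of $W(\sigma)$ and $W^0(\sigma)$ follow the paper's own argument essentially verbatim: Lemma \ref{A(Ts)}(3) applied to the torus $Z(\bG)^0$ gives $Z(\bG)^0=\Zso\cdot[Z(\bG)^0,\sigma]\subset\Zso\cdot[\bT,\sigma]$ in one direction, finiteness of $\Ts\cap[\bT,\sigma]$ kills the connected group $\Zso$ in the other, and the invariance under the central quotient of the root data, of the constants $C_{\sigma,\CO}$, and of the condition $[\sigma,\dot w]\in[\bT,\sigma]$ handles the last two claims. Where you genuinely diverge is the bijection of classes in (2): writing $\bT'=\bT/Z^0(\bG)$, you identify the two parameterizing tori via $\bT'/[\bT',\sigma]=\bT/\bigl([\bT,\sigma]\cdot Z^0(\bG)\bigr)=\LT$ under the hypothesis and then invoke Proposition \ref{omnibus}(2) on both sides. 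The paper instead argues directly on representatives: if $g\sigma g\inv=zt\sigma$ with $z\in Z^0(\bG)\subset[\bT,\sigma]$, it writes $z=t_1\inv\lexp{\sigma}{t_1}$ and conjugates further by $t_1$ to land on $t\sigma$. Both routes rest on the same mechanism --- $Z^0(\bG)\subset[\bT,\sigma]$ lets one absorb a central twist into a $\sigma$-commutator --- but yours reuses the already-established parameterization and gets injectivity and surjectivity simultaneously, at the cost of the (true, and worth stating) bookkeeping that Proposition \ref{omnibus} applies to $\bG/Z^0(\bG)$ with the same $W^\sigma$ and that $\Tso$ surjects onto $((\bT')^\sigma)^0$, which holds because $\Tso\cap Z^0(\bG)$ is finite under the hypothesis of (1). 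The paper's direct computation avoids that overhead but is less structural.
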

\begin{proof}
If  $\Zso$  is  trivial,  since  $Z(\bG)^0=\Zso.[Z(\bG)^0,\sigma]$ by
Lemma \ref{A(Ts)}(3) applied to the torus $Z(\bG)^0$, we have
$Z(\bG)^0\subset[\bT,\sigma]$.  Conversely if $Z(\bG)^0\subset|\bT,\sigma]$
then $\Zso$ is a connected subgroup of the finite group $\Tso\cap|\bT,\sigma]$,
hence is  trivial. This proves  (1). 

Any  quasi-semisimple class of  $\Gun$ has a  representative in $\Tun$. Two
elements $\sigma,\,t\sigma\in\Tun$ have conjugate images
$\overline\sigma,\,\overline{t\sigma}$  in  $\Gun/Z^0(\bG)$  if  and  only if
there   exists  $g\in\bG$  such  that  $g\sigma g\inv=zt\sigma$  with  $z\in
Z^0(\bG)$. If the conditions of (1) are satisfied we can write $z=t_1\inv\lexp\sigma t_1$
for some $t\in\bT$, so that $t_1g\sigma g\inv t_1\inv=t\sigma$. Hence 
the quotient map induces a bijection on the classes.

Now  for a representative $\dot w$ of $w\in W^\sigma$, the condition
$[\overline\sigma,\overline{\dot  w}]\in [\overline\bT,\overline\sigma]$ is
equivalent  to the  condition $[\sigma,\dot  w]\in[\bT,\sigma]$, whence the
preservation  of  $W(\sigma)$  by  the  quotient  map.  The preservation of
$W^0(\sigma)$  comes from the  fact that $\bG$  and $\bG/Z(\bG)^0$ have the
same root systems and isomorphic root subgroups, thus same
$C_{\sigma,\CO(\alpha)}$ and $\Sigma_\sigma$.
\end{proof}
%

\subsection*{Quasi-central elements}
To classify \qss\ elements it will be useful to relate them to quasi-central
elements, whose definition we recall from \cite[1.15]{grnc}:
\begin{definition}\label{quasicentral}\index{quasi-central@quasi-central}
An element $\sigma$ is quasi-central if $\Gs$ has maximal dimension among
centralizers of elements of $\bG\cdot\sigma$.
\end{definition}
By \cite[1.15(iii)]{grnc} we have:
\begin{proposition}\label{Wsigma=Wosigma}The following are equivalent:
\begin{enumerate}
\item $\sigma$ is quasi-central.
\item $\sigma$ is \qss\ and $W^\sigma=W(\sigma)=W^0(\sigma)$.
\end{enumerate}
\end{proposition}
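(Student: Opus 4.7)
The plan is to derive the equivalence from a dimension count for $\bG^{t\sigma}$ as $t$ varies, combined with the structural description of $W^\sigma$ and $W^0(\sigma)$ from the preceding results.

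First, I would argue that (1) forces $\sigma$ to be quasi-semisimple: by the Jordan-type decomposition in disconnected reductive groups, every coset $\bG\sigma$ contains quasi-semisimple elements, and the centralizer-dimension function is maximized on these (the standard classical argument that semisimple elements dominate in centralizer dimension carries over, since the unipotent part of a non-quasi-semisimple element always contributes strictly to shrinking the centralizer). Given this, by Proposition \ref{omnibus}(1), every $\bG$-conjugacy class of quasi-semisimple elements in $\bG\sigma$ has a representative of the form $t\sigma$ with $t\in\Tso$, so quasi-centrality of $\sigma$ is equivalent to the inequality $\dim\bG^{t\sigma}\leq\dim\Gs$ for all $t\in\Tso$.

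Next, I would apply Proposition \ref{root system of Gtso}(1) to $t\sigma$ to obtain
\[
\dim\bG^{t\sigma}=\dim(\bT^{t\sigma})^0+|\Sigma_{t\sigma}|.
\]
Since $t\sigma$ and $\sigma$ induce the same automorphism of $\bT$, the torus dimension $\dim(\bT^{t\sigma})^0=\dim\Tso$ is constant in $t$. So quasi-centrality reduces to $|\Sigma_{t\sigma}|\leq|\Sigma_\sigma|$ for all $t\in\Tso$. By Definition \ref{Sigmatsigma}, $\Sigma_{t\sigma}$ records those $\sigma$-orbits $\CO$ on $\Sigma$ for which $C_{t\sigma,\CO}=1$ (in characteristic $2$, with the non-special exclusion). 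The $\Tso$-action tunes the constants $C_{t\sigma,\CO}$ essentially independently across orbits, so $|\Sigma_\sigma|$ is maximal exactly when every $\sigma$-orbit $\CO$ on simple roots already contributes a generator $\pi(\alpha)$ to $\Sigma_\sigma^+$.

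Finally, I would match this maximality condition to the Weyl-group identity. By Proposition \ref{root system of Gtso}(2), $W^\sigma$ is generated by the longest elements $w_\CO$ of the parabolic subgroups attached to $\sigma$-orbits on simple roots, while $W^0(\sigma)$ is generated by exactly those $w_\CO$ that also correspond to orbits contributing to $\Sigma_\sigma^+$. Hence $|\Sigma_\sigma|$ is maximal if and only if $W^0(\sigma)=W^\sigma$. Since the containments $W^0(\sigma)\subseteq W(\sigma)\subseteq W^\sigma$ are built into Proposition \ref{exactsequence} and the definition of $W(\sigma)$, the single equality $W^0(\sigma)=W^\sigma$ propagates to the three-way equality in (2), and the converse is immediate from the same dimension formula. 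The main obstacle is the ``essentially independent'' claim about tuning the $C_{t\sigma,\CO}$ by varying $t\in\Tso$: this requires matching the coefficients against the action of $\Tso$ on the products $u_{\pi(\alpha)}=\prod_i u_{\sigma^i(\alpha)}$ used in the proof of Proposition \ref{root system of Gtso}, and verifying that the obstruction to achieving $C_{t\sigma,\CO}=1$ on a given orbit is precisely that the corresponding $w_\CO$ fails to lie in $W^0(\sigma)$.
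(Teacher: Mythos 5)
The paper does not actually prove this proposition: it is quoted verbatim from \cite[1.15(iii)]{grnc}, so any self-contained argument is necessarily a departure from the text. Your overall architecture (reduce to representatives $t\sigma$ with $t\in\Tso$ and compare $\dim\bG^{t\sigma}=\dim\Tso+|\Sigma_{t\sigma}|$) is a reasonable strategy, and the bookkeeping at the end (the containments $W^0(\sigma)\subseteq W(\sigma)\subseteq W^\sigma$, so one equality yields all three) is fine. But the argument has a genuine hole exactly where you flag ``the main obstacle''. The claim that the constants $C_{t\sigma,\CO}$ can be tuned ``essentially independently'' by varying $t\in\Tso$ is false: by Equation \ref{CtsO} the ratio $C_{t\sigma,\CO(\alpha)}/C_{\sigma,\CO(\alpha)}$ equals $\bar\alpha(t)$, and the characters $\bar\alpha$ restricted to $\Tso$ inherit all the linear relations among the roots (there are far more orbits than $\dim\Tso$); moreover the constant on a cospecial orbit is rigidly tied to the constant on the corresponding special orbit by the commutator relations in the $A_2$ subsystem. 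So the equivalence ``$|\Sigma_{t\sigma}|\le|\Sigma_\sigma|$ for all $t$ if and only if every simple orbit contributes'' is not established, and the implication (1) $\Rightarrow$ (2) rests entirely on it.

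Worse, the intermediate criterion you extract --- that $|\Sigma_\sigma|$ is maximal exactly when every $\sigma$-orbit of simple roots contributes a generator $\pi(\alpha)$ to $\Sigma_\sigma^+$ --- is contradicted by the paper's own setup. In type $A_{2r}$ with $p\ne 2$ the quasi-central element of symplectic type has $C_{\sigma,\CO}=-1$ on the special orbit (Proposition \ref{Csigma,O(alpha)}), so $\pi(\alpha)\notin\Sigma_\sigma$ for that simple orbit; yet $\sigma$ is quasi-central, $\Gso\simeq\Sp_{2r}$ has the same dimension as the orthogonal-type centralizer $\SO_{2r+1}$, and $W^0(\sigma)=W^\sigma$ because the reflection $w_\CO$ arises from the cospecial root $2\pi(\alpha)$ rather than from $\pi(\alpha)$. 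Your criterion would have to be weakened to ``some root of $\Sigma_\sigma$ proportional to $\pi(\alpha)$'', and matching that to the maximality of $|\Sigma_{t\sigma}|$ requires a case analysis (special versus non-special orbits, reducedness of $\Sigma_{t\sigma}$) that the proposal does not carry out. Finally, the reduction of quasi-centrality to a comparison over $t\in\Tso$ already uses that quasi-semisimple elements maximize centralizer dimension within the whole coset, which is itself part of the content of \cite[1.14--1.15]{grnc}; so the proposal silently leans on the very reference the paper invokes for the proposition.
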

By (2) above and Proposition \ref{AG(ts)}, Proposition \ref{exactsequence}
reduces when $\sigma$ is quasi-central to $\Gs/\Gso\simeq\Ts/\Tso$.
\begin{remark} When $\sigma$ is quasi-central,
$\Ts=\Tso.Z(\bG^\sigma)$ and
$\Ts/\Tso= Z(\bG^\sigma)/Z(\Gso)$.
\end{remark}
\begin{proof}The second paragraph of the proof of \cite[1.29]{grnc}
gives the first assertion. It remains to show that
$\Tso\cap Z(\bG^\sigma)=Z(\Gso)$.
We have $\Tso\cap Z(\bG^\sigma)\subset Z(\Gso)$. We get the reverse
inclusion using $Z(\Gso)\subset \Tso$ so that
$Z(\Gso)$ commutes with 
$\Gso.\Ts$  which is equal to $\bG^\sigma$ by the 
first paragraph of the proof of \cite[1.29]{grnc}.
\end{proof}

\begin{lemma} There is a quasi-central element of finite order
in $\bT^1$.
\end{lemma}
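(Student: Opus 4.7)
The plan is to combine the existence of quasi-central elements in $\Gun$, which follows from Proposition \ref{Wsigma=Wosigma} (i.e.\ \cite[1.15(iii)]{grnc}), with Lemma \ref{finite order}.

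First I would observe that $\Gun$ contains at least one quasi-central element. The function $x\mapsto\dim\bG^x$ on $\Gun$ is upper-semicontinuous and takes only finitely many values on the irreducible variety $\Gun$, so it attains its maximum; any element achieving this maximum is quasi-central by Definition \ref{quasicentral}. By Proposition \ref{Wsigma=Wosigma}, such an element $\tau$ is necessarily quasi-semisimple, and hence stabilizes some pair (maximal torus, Borel subgroup) of $\bG$. Since all such pairs are $\bG$-conjugate, after replacing $\tau$ by a suitable $\bG$-conjugate I may assume $\tau\in\Tun$; conjugation preserves the centralizer, so $\tau$ remains quasi-central.

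Next I apply Lemma \ref{finite order} to this $\tau\in\Tun$, producing $\tau'\in\tau\cdot\Tso$ of finite order with $\bG^{\tau'}=\bG^\tau$. Since $\Tso\subset\bT$, we have $\tau'\in\bT\cdot\tau=\Tun$; and since $\dim\bG^{\tau'}=\dim\bG^\tau$ is still maximal, $\tau'$ is again quasi-central. This $\tau'$ is the desired element.

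The only substantive point is the initial existence of a quasi-central element, which relies on the semi-continuity of centralizer dimension together with the previously cited characterization from \cite{grnc} showing that maximal-centralizer elements are quasi-semisimple; everything else is a direct application of Lemma \ref{finite order}.
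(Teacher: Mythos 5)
Your proof is correct and takes essentially the same route as the paper: both establish the existence of a quasi-central element in $\Tun$ and then apply Lemma \ref{finite order}, observing that an element with the same (hence maximal-dimensional) centralizer is again quasi-central. The only difference is that you re-derive the existence step --- which the paper simply cites from \cite[1.16]{grnc} --- via the attainment of the maximum of $x\mapsto\dim\bG^x$ together with the implication quasi-central $\Rightarrow$ quasi-semisimple from Proposition \ref{Wsigma=Wosigma} and a conjugation into $\Tun$; this is a valid substitute.
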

\begin{proof}
By \cite[1.16]{grnc} there exists a quasi-central element $\sigma_0$
in $\Tun$. By Lemma \ref{finite order} there exists $\sigma\in\Tun$ of finite order
such that $C_\bG(\sigma)=C_\bG(\sigma_0)$, which implies by
definition that $\sigma$ is quasi-central.
\end{proof}
\begin{context}\label{fix qss}
In the rest of the text, we fix a quasi-central element $\sigma\in\Tun$
of finite order and $t$ will denote a finite order element of $\Tso$.
\end{context}
Thus   arbitrary  \qss\   elements  (which   may  be  chosen  in
$\Tso\cdot\sigma$  by Proposition \ref{omnibus}(1))  will now  be denoted  by 
$t\sigma$ --- instead of $\sigma$ as they were before.

We have
\begin{equation}\label{CtsO}
C_{t\sigma,\CO(\alpha)}/C_{\sigma,\CO(\alpha)}=
\prod_{\beta\in\CO(\alpha)}\beta(t)=\bar\alpha(t)
\end{equation}
where $\bar\alpha:=\sum_{\beta\in\CO(\alpha)}\alpha$
and since $t\in\Tso$, we have $\bar\alpha(t)=(\card{\CO(\alpha)}\alpha)(t)$.
\index{alphabar@$\bar\alpha$}

\begin{notation}\label{symplectic type}
\index{orthogonal@orthogonal type}\index{symplectic@symplectic type}
By  \cite [remarks following 1.22]{grnc},  when $\bG$ is quasi-simple there
is  precisely one conjugacy class of quasi-central automorphisms inducing a
given  diagram  automorphism  of  $W$,  except for
groups  of type $A_{2r}$ when $p\neq 2$, in which case  there are two classes
inducing the
non-trivial diagram automorphism of $W$. For one of them $\Gso$ is of type
$B_r$  --- this class disappears if $p=2$ --- 
and  for  the  other of type $C_r$.
In the first case we say that $\sigma$ is {\em of orthogonal type} and
in the second case {\em of symplectic type}
(of course there can be several classes of elements inducing the same class 
of automorphisms, see Corollary \ref{centerR(sigma)}).
\end{notation}

\begin{proposition}
The simple roots of $\Sigma_\sigma$ corresponding to $\Sigma_\sigma^+$ 
are the  $\{\pi(\alpha)\mid  \alpha\in\Pi\}$,  except that  if
$\CO(\alpha)$  is special and  $\pi(\alpha)\notin\Sigma_\sigma$, then one has
to   replace  $\pi(\alpha)$  with  $2\pi(\alpha)$.
\end{proposition}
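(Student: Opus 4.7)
The plan is to exploit the quasi-central hypothesis to pin down the Coxeter structure of $W^\sigma$, compute how each Coxeter generator acts on the relevant $\pi(\alpha)$, and then identify the corresponding simple root inside $\Sigma_\sigma^+$.

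Since $\sigma$ is quasi-central, Proposition \ref{Wsigma=Wosigma} gives $W^\sigma=W^0(\sigma)$, so $W^\sigma$ is the Weyl group of $\Sigma_\sigma$ by Proposition \ref{root system of Gtso}(1). By Proposition \ref{root system of Gtso}(2), a Coxeter set of simple reflections is $S_\sigma=\{w_\CO:\CO\in\Pi/\sigma\}$, where $w_\CO$ is the longest element of $W_\CO$. This yields a bijection $\CO\leftrightarrow\beta_\CO$ between $\sigma$-orbits in $\Pi$ and simple roots of $\Sigma_\sigma^+$, in which $\beta_\CO$ is characterised as the unique simple root sent by $w_\CO$ to its negative.

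Next I would verify that $w_\CO(\pi(\alpha))=-\pi(\alpha)$ for any $\alpha\in\CO$, so that $\beta_\CO$ lies on the ray $\BQ_{>0}\,\pi(\alpha)$. If $\CO$ is not special, Remark \ref{roots orthogonal} shows the reflections $s_\beta$ for $\beta\in\CO$ commute pairwise, each negating $\beta$ and fixing the other elements of $\CO$; hence $w_\CO=\prod_{\beta\in\CO}s_\beta$ negates every element of $\CO$ and therefore negates their average $\pi(\alpha)$. If $\CO=\{\alpha_1,\alpha_2\}$ is special, then $W_\CO$ is of type $A_2$ and its longest element acts as $-1$ on the span of $\CO$, giving the same conclusion.

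To pin down $\beta_\CO$ on the ray, I would split cases. If $\pi(\alpha)\in\Sigma_\sigma$ then $\pi(\alpha)\in\Sigma_\sigma^+$ (since $\alpha\in\Sigma^+$), and it is simple because any strictly larger positive multiple of it in $\Sigma_\sigma^+$, such as $2\pi(\alpha)$, would decompose as $\pi(\alpha)+\pi(\alpha)$. If instead $\pi(\alpha)\notin\Sigma_\sigma$, then $\CO$ must be special---otherwise no positive multiple of $\pi(\alpha)$ lies in $\Sigma_\sigma$, contradicting the existence of $\beta_\CO$; in that case $\CO=\{\alpha_1,\alpha_2\}$ and $2\pi(\alpha)=\pi(\alpha_1+\alpha_2)$ is the only positive multiple available, giving $\beta_\CO=2\pi(\alpha)$. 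The main obstacle is the last step: one has to know that $2\pi(\alpha)$ is actually in $\Sigma_\sigma$, i.e.\ that $C_{\sigma,\{\alpha_1+\alpha_2\}}=1$ in this situation. The cleanest justification is a rank count: the Coxeter system $(W^\sigma,S_\sigma)$ has $|\Pi/\sigma|$ generators, so $\Sigma_\sigma$ has that rank, which forces every orbit to contribute a simple root and hence forces $2\pi(\alpha)$ into $\Sigma_\sigma$.
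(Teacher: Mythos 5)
Your argument is correct in substance, and it essentially reconstructs from scratch what the paper outsources to Steinberg: the paper's entire proof is a citation of \cite[1.32]{St}, which already says that the basis of $\Sigma_\sigma$ for $\Sigma_\sigma^+$ consists of the elements of $\Sigma_\sigma^+$ colinear to the vectors of $\pi(\Pi)$, from which the statement is immediate. Your route --- identifying each Coxeter generator $w_\CO$ of $W^\sigma=W^0(\sigma)$ as the simple reflection attached to the unique simple root on the ray $\BQ_{>0}\,\pi(\alpha)$, and then deciding between $\pi(\alpha)$ and $2\pi(\alpha)$ --- is exactly the mechanism behind Steinberg's statement, and your closing rank count is the right way to force every orbit to contribute a root; it is also precisely where quasi-centrality ($W^\sigma=W^0(\sigma)$, valid here by Context \ref{fix qss}) enters, since for a general \qss\ element $t\sigma$ the analogous claim fails because orbits can fail to contribute. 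What your proof buys is self-containedness; what the citation buys is that Steinberg has already checked the one point you leave implicit, namely that $S_\sigma$ is the set of simple reflections for the \emph{specific} positive system $\Sigma_\sigma^+$ (this follows because $w_\CO$ permutes the positive roots of $\Sigma$ not supported on $\CO$, hence permutes the elements of $\Sigma_\sigma^+$ off the ray $\BQ_{>0}\,\pi(\alpha)$, so it is indeed simple for that order).

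Two local slips should be repaired, neither of which affects the conclusion. First, the longest element of a parabolic of type $A_2$ does \emph{not} act as $-1$ on the span of $\CO=\{\alpha_1,\alpha_2\}$: it sends $\alpha_1\mapsto-\alpha_2$ and $\alpha_2\mapsto-\alpha_1$, so it is $-1$ only on the line through $\alpha_1+\alpha_2$; fortunately $\pi(\alpha_1)$ lies on that line, so $w_\CO(\pi(\alpha_1))=-\pi(\alpha_1)$ still holds. Second, a special orbit need not have two elements: when $\sigma$ permutes $m$ components of type $A_{2r}$ an orbit can have $2m$ elements, and $W_\CO$ is then of type $A_2^m$ rather than $A_2$; its longest element still negates $\bar\alpha=\sum_{\beta\in\CO}\beta$ and hence $\pi(\alpha)$, so the argument goes through unchanged.
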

\begin{proof}
Using \cite[1.32]{St} we get that the basis of $\Sigma_\sigma$ corresponding
to $\Sigma_\sigma^+$  consists 
of those vectors colinear to the vectors of $\pi(\Pi)$ which are in 
$\Sigma_\sigma^+$. This gives the statement.
\end{proof}

\begin{remark}
Note that, if there are no special orbits (in particular if $\Sigma$
has no component of type $A_{2r}$), Definition \ref{Sigmatsigma} and 
Proposition \ref{root system of Gtso} simplify substantially. We have just
$\Sigma_\sigma=\{\pi(\alpha)\mid  \alpha\in \Sigma\}$, and $\Sigma_{t\sigma}$
is  a  subsystem  of  $\Sigma_\sigma$,  defined by the additional condition
$\bar\alpha(t)=1$.

In  type $A_{2r}$, it may happen that $\Sigma_{t\sigma}$ is not a subsystem
of   $\Sigma_\sigma$  for  any   choice  of  $\sigma$.   For  instance,  in
$\bG=\GL_5$,    choose   $\sigma$ as in \cite[page 357]{grnc},
such   that   $\Gso\simeq\Sp_4$  and preserving the diagonal torus.
Number the simple roots
$\alpha_1,\ldots,\alpha_4$ such that
$\alpha_i(s)=\lambda_i/\lambda_{i+1}$ for
$s=\diag(\lambda_1,\ldots,\lambda_5)$.
For $t=\diag(i,1,1,1,-i)\in\Gso$, we get that
$\Sigma_{t\sigma}^+=\{\pi(\alpha_2+\alpha_3),\pi(\alpha_1+\alpha_2)\}$
while $\Sigma_\sigma^+=\{\pi(\alpha_1),\pi(\alpha_2+\alpha_3),
\pi(\alpha_1+\alpha_2+\alpha_3+\alpha_4),
\pi(\alpha_1+\alpha_2+\alpha_3)\}$,   and   for   the   representative
$\sigma'$ of the other  class of quasi-central
automorphisms  whose  fixed  points are $\bG^{\sigma'}=\Orth_5$, we have
$\Sigma_{\sigma'}^+= \{\pi(\alpha_1),\pi(\alpha_2),
\pi(\alpha_1+\alpha_2),\pi(\alpha_1+\alpha_2+\alpha_3)\}$. Thus
$\Sigma_{t\sigma}$ is not a subset of $\Sigma_\sigma$ or $\Sigma_{\sigma'}$.
\end{remark}
\begin{proposition}\label{Gs ss}
We have $Z^0(\Gso)=\Zso$, and
the following are equivalent:
\begin{enumerate}
\item $\Gs$ is semisimple.
\item $Y^\sigma\otimes\BQ=\BQ\Sigma^\vee_\sigma$.
\item $\Zso=1$.
\end{enumerate}
\end{proposition}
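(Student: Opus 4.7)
The plan is to first establish the identity $Z^0(\Gso)=\Zso$; once it is in hand, the three equivalences fall out of the reductive structure of $\Gso$ recorded in Proposition~\ref{root system of Gtso}. The inclusion $\Zso\subset Z^0(\Gso)$ is immediate: as a connected subgroup of $Z(\bG)\cap\bG^\sigma$, $\Zso$ lies in $Z(\Gs)$ and hence in $Z^0(\Gso)$.

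For the reverse inclusion $Z^0(\Gso)\subset\Zso$, both sides are connected subtori of $\Tso$, so I compare cocharacter lattices in $Y^\sigma=Y(\Tso)$ (Lemma~\ref{X(Tso)}). Using $\sigma$-invariance of the pairing, one computes
\begin{equation*}
Y(\Zso)=\Sigma^\perp\cap Y^\sigma=\pi(\Sigma)^\perp\cap Y^\sigma,\qquad Y(Z^0(\Gso))=\Sigma_\sigma^\perp\cap Y^\sigma,
\end{equation*}
so the problem reduces to the spanning claim $\BQ\Sigma_\sigma=\BQ\pi(\Sigma)$ in $X_\sigma\otimes\BQ$. The nontrivial containment amounts to producing, for each $\alpha\in\Sigma$ with $\pi(\alpha)\notin\Sigma_\sigma$, a root $\beta\in\Sigma$ with $\pi(\beta)\in\Sigma_\sigma$ collinear to $\pi(\alpha)$. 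By Definition~\ref{Sigmatsigma}, either (i) $p=2$ and $\CO(\alpha)$ is special, or (ii) $C_{\sigma,\CO(\alpha)}\ne 1$. In case~(i) I take $\beta=\alpha+\sigma^j\alpha$ for the $j$ witnessing specialness: its orbit is a singleton, hence non-special, with $C=1$, and $\pi(\beta)=2\pi(\alpha)$. In case~(ii), combining the quasi-centrality of $\sigma$ with Notation~\ref{symplectic type} and Remark~\ref{roots orthogonal} forces us to be in a type-$A_{2r}$ component in characteristic $\ne 2$ with $\sigma$ of symplectic type, and $\CO(\alpha)$ a cospecial singleton $\{\beta+\sigma\beta\}$; then the parallel special pair supplies $\pi(\beta)\in\Sigma_\sigma$ with $\pi(\alpha)=2\pi(\beta)$.

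Granted $Z^0(\Gso)=\Zso$, the equivalences are routine: by Proposition~\ref{root system of Gtso}(1) the group $\Gso$ is connected reductive with coroot lattice $\Sigma_\sigma^\vee$ in $Y^\sigma$, so $\Gs$ is semisimple iff $\BQ\Sigma_\sigma^\vee=Y^\sigma\otimes\BQ$, which is~(2), and iff the connected center $Z^0(\Gso)=\Zso$ is trivial, which is~(3). The main obstacle I anticipate is case~(ii); it does not admit a uniform root-theoretic argument and requires invoking the classification of quasi-central classes, specifically that $C_{\sigma,\CO}\ne 1$ occurs only for symplectic-type involutions in $A_{2r}$, exactly along cospecial singletons that are collinear to special pairs already lying in $\Sigma_\sigma$.
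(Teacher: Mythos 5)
Your strategy is genuinely different from the paper's: the paper disposes of $Z^0(\Gso)=\Zso$ in one line by citing \cite[Corollaire 1.25(ii)]{grnc} (which gives $C_\bG(\Rad\Gso)=\bG$), whereas you reprove it by comparing cocharacter lattices inside $Y^\sigma$ and reducing to the spanning claim $\BQ\Sigma_\sigma=\BQ\pi(\Sigma)$. That reduction is correct, and your derivation of the three equivalences from the first statement matches the paper's. But the execution of your case (ii) contains a genuine error. You assert that $C_{\sigma,\CO(\alpha)}\neq1$ forces $\sigma$ to be of symplectic type with $\CO(\alpha)$ cospecial. This is backwards: by Proposition \ref{Csigma,O(alpha)} (and visibly in the paper's $\GL_5$ example, where $\Sigma_\sigma^+$ for the symplectic $\sigma$ contains $\pi(\alpha_2+\alpha_3)$ but not $\pi(\alpha_2)$), for $\sigma$ of symplectic type the orbits with $C_{\sigma,\CO}\neq1$ are exactly the \emph{special} ones, while it is for $\sigma$ of \emph{orthogonal} type that the \emph{cospecial} orbits have $C_{\sigma,\CO}\neq1$. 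As written, your case (ii) therefore omits the symplectic/special configuration (where the fix is the same as in your case (i): pass to $\beta=\alpha+\sigma^j\alpha$, whose orbit is cospecial, hence not special and with $C=1$, giving $\pi(\beta)=2\pi(\alpha)\in\Sigma_\sigma$) and misattributes the orthogonal/cospecial configuration, which is the one your ``parallel special pair'' recipe actually handles. Both configurations are in play here, since Context \ref{sigma symplectic}, which outlaws the orthogonal type, is only imposed in Section 2, after this proposition.

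A secondary weakness is that your case analysis rests on knowing exactly which orbits of a quasi-central $\sigma$ have $C_{\sigma,\CO}\neq1$, a classification the paper itself only records later and by citation. You can bypass the case analysis entirely: by Proposition \ref{Wsigma=Wosigma}, quasi-centrality gives $W^\sigma=W^0(\sigma)$, and since the fixed space of each of these reflection groups on $Y^\sigma\otimes\BQ$ is the orthogonal of the span of its roots, one gets $\Sigma_\sigma^\perp\cap(Y^\sigma\otimes\BQ)=(Y^\sigma\otimes\BQ)^{W^0(\sigma)}=(Y^\sigma\otimes\BQ)^{W^\sigma}=\Sigma^\perp\cap(Y^\sigma\otimes\BQ)$, which is precisely the equality of orthogonals that your lattice computation needs.
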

\begin{proof}
The first statement follows from \cite[Corollaire 1.25(ii)]{grnc}
which applied for $\bL=\bG$ states that $C_\bG(\Rad\Gso)=\bG$, thus
$Z^0(\Gso)\subset \Zso$. The converse inclusion is obvious.

The equivalence of items (1) to (3) is an immediate consequence of the
first statement and the definitions.
\end{proof}
\begin{remark}
If we do note take the identity component,
it  is not always true that $Z(\Gso)\subset Z(\bG)$. A counterexample is in
$\GL_3$  with  $\sigma(x)=J\lexp  t  x\inv  J\inv$ where $J=\begin{pmatrix}
0&0&-1\\0&-1&0\\1&0&0\end{pmatrix}$.    In   that   case   $\begin{pmatrix}
-1&0&0\\0&1&0\\0&0&-1\end{pmatrix}\in Z(\Gso)$.

Note however that when $\sigma'$ is quasi-central and there is no component
of   type   $A_{2r}$   where   $\sigma'$   is   of  symplectic  type,  then
$Z(\bG^{\sigma'0})\subset   Z(\bG)$.   Indeed,   then   the  
root subgroups  of  $\bG^{\sigma'0}$  involve  all the
root subgroups of $\bG$, thus all roots of
$\bG$ must vanish on a central element in $\bG^{\sigma'0}$.
\end{remark}
We now describe the centralizers of quasi-central elements
for $\bG$ quasi-simple.
\begin{proposition}\label{type of Gso}
Assume that $\bG$ is quasi-simple. Then $\Gso$ is semisimple, and
\begin{enumerate}
\item  If $\bG$ is  adjoint not of type $A_{2r}$ then $\Gs=\Gso$ is adjoint.
\item  If $\bG$ is  simply connected not of type $A_{2r}$ then $\Gs=\Gso$ is
simply connected.
\item When $\bG$ is of type $D_4$ and $n_\sigma =3$, then $\bG$ is adjoint or 
simply connected and $\Gso$ is of type $G_2$.
\item When $\bG$ is of type $D_r$ and $n_\sigma =2$, then $\bG$ is adjoint or 
simply connected or equal to $\SO_{2r}$. In this last case $\Gso$ is adjoint
of type $B_{r-1}$ and $\Gs=\pm 1\cdot\Gso$.
\item When $\bG$ is of type $A_{2r}$ then if $\sigma$ is of symplectic type
then $\Gs=\Gso$ is simply connected of type $C_r$, 
and if $\sigma$ is of orthogonal type then $\Gs=\Gso$ is adjoint of type
$B_r$.
\item When $\bG$ is of type $A_{2r-1}$ and $n_\sigma=2$ then $\Gso$ is of type
$C_r$ and
\begin{itemize}
\item if $d=|P/X|$ is even, where $P$ is the weight lattice of $\Sigma$, then
$\Gso$ is adjoint and $\Gs=\Gso$ except if $d$ divides $r$ and $\car k\ne
2$ in which case $\Gs=\pm 1\cdot\Gso$, where ``$-1$'' is the image in
$\bG$ of a primitive $2d$-th root of unity in the center of $\SL_{2r}$.
\item if $d=|P/X|$ is odd, $\Gs=\Gso$ is simply connected.
\end{itemize}
\end{enumerate}
\end{proposition}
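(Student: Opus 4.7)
The plan is to combine the structural results of the section with a case-by-case analysis of the root system of $\Gso$ and the structure of the center of $\bG$. First, since $\bG$ is quasi-simple, $Z(\bG)^0 = 1$, hence $\Zso = 1$, so Proposition \ref{Gs ss} gives immediately that $\Gso$ is semisimple. Since $\sigma$ is quasi-central, Proposition \ref{Wsigma=Wosigma} collapses the exact sequence of Proposition \ref{exactsequence} (combined with Proposition \ref{AG(ts)}) to $\Gs/\Gso \simeq \Ts/\Tso$, which Lemma \ref{A(Ts)}(1) expresses as the $p'$-part of $\ker(1+\sigma+\ldots+\sigma^{n_\sigma-1}\mid X)/(\sigma-1)X$. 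Whether $\Gso$ is adjoint, simply connected, or intermediate is then read from comparing $X(\Tso) = X_\sigma$ (Lemma \ref{X(Tso)}) and $Y(\Tso) = Y^\sigma$ with the root lattice $\BZ\Sigma_\sigma$ and the coroot lattice $\BZ\Sigma_\sigma^\vee$ given by Proposition \ref{root system of Gtso}.

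For (1) and (2) (no type $A_{2r}$ component), there are no special orbits, so $\Sigma_\sigma = \pi(\Sigma)$ and $\Sigma_\sigma^\vee$ consists of orbit sums $\bar\alpha^\vee$. When $\bG$ is adjoint, $\sigma$ permutes the basis of simple roots of $X$, and Lemma \ref{A(Ts)}(2) gives $\Ts = \Tso$, whence $\Gs = \Gso$; the identification $X_\sigma = \BZ\Sigma_\sigma$ then shows $\Gso$ is adjoint. The simply connected case is dual, using that $Y = \BZ\Sigma^\vee$ and $Y^\sigma$ is spanned by orbit sums of coroots. For (3), the triality automorphism on $D_4$ permutes cyclically the three non-trivial elements of $Z(\Spin_8) \simeq (\BZ/2)^2$, so the only $\sigma$-stable subgroups are trivial or full, forcing $\bG \in \{\Spin_8, \PSO_8\}$; folding the $D_4$ diagram yields $G_2$.

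For (4), inspection of $Z(\Spin_{2r})$ (the swap of the two $\BZ/2$ factors when $r$ is even, inversion of $\BZ/4$ when $r$ is odd) shows the $\sigma$-stable subgroups correspond exactly to $\Spin_{2r}$, $\SO_{2r}$, $\PSO_{2r}$; the adjoint and simply connected cases follow from (1)--(2), while for $\bG = \SO_{2r}$ one directly verifies that $X_\sigma$ coincides with the $B_{r-1}$-root lattice (so $\Gso$ is adjoint) and Lemma \ref{A(Ts)}(1) produces the extra $\BZ/2$ realised by $-I \in \SO_{2r}$. For (5), $Z(\SL_{2r+1}) = \BZ/(2r+1)$ has odd order, so only $\bG \in \{\SL_{2r+1}, \PGL_{2r+1}\}$ arise; here special orbits appear, and the appearance of $2\bar\alpha^\vee$ vs $\bar\alpha^\vee$ in Definition \ref{Sigmatsigma} distinguishes $B_r$ from $C_r$, matching the two quasi-central classes of Notation \ref{symplectic type}. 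In both sub-cases Lemma \ref{A(Ts)}(1) yields $\Ts = \Tso$ since $H$ has odd exponent divisible by $n_\sigma = 2$.

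The main obstacle is case (6). Here $Z(\SL_{2r}) = \BZ/2r$ is cyclic of even order and, since $\sigma$ acts by inversion, every subgroup is $\sigma$-stable; hence each divisor $d$ of $2r$ gives a legitimate $\bG$ with $|P/X| = d$. The hard part is an explicit lattice computation of $H = \ker(1+\sigma\mid X)/(\sigma-1)X$ for each such intermediate $X$: writing $X$ in terms of the standard basis of the $A_{2r-1}$ weight lattice and tracking the $\sigma$-action produces a group whose $p'$-part is trivial when $d$ is odd, and whose size depends on whether $d \mid r$ when $d$ is even. The subtle point is to identify a generator in the exceptional case where $d$ is even, $d \mid r$, and $\car k \neq 2$; this generator is realised at the level of $\SL_{2r}$ by a primitive $2d$-th root of unity, and one must check its image in $\bG$ is $\sigma$-fixed and non-trivial modulo $\Tso$. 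Once the computation of $\Ts/\Tso$ is settled, the conclusion $\Gs = \Gso$ or $\Gs = \pm 1 \cdot \Gso$ follows by the general mechanism of the first paragraph.
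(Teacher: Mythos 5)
Your overall strategy coincides with the paper's: semisimplicity of $\Gso$ from Proposition \ref{Gs ss}, the reduction $\Gs/\Gso\simeq\Ts/\Tso$ from quasi-centrality via Propositions \ref{AG(ts)}, \ref{exactsequence} and \ref{Wsigma=Wosigma}, Lemma \ref{A(Ts)} to compute $\Ts/\Tso$, and comparison of $X_\sigma$ with the root and weight lattices of $\Sigma_\sigma$ for the isogeny type; cases (1)--(4) are handled essentially as in the paper. But there are genuine gaps in (5) and (6).

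In case (6) you correctly identify the decisive computation --- $H=\Ker(1+\sigma\mid X)/(\sigma-1)X$ for each intermediate lattice $X$ with $|P/X|=d$ --- but you do not perform it; you simply assert that the $p'$-part is trivial for $d$ odd and governed by $d\mid r$ for $d$ even. That condition \emph{is} the content of the statement, and it is not at all obvious: the paper needs a sustained explicit lattice manipulation (a tailored basis of $X$ built from an element $\mu\equiv d\varpi_{2r-1}\pmod Q$, explicit generating sets for $(\sigma-1)X$ and for $\Ker(1+\sigma)$) to extract ``nontrivial iff $d$ is even and $2d$ divides $2r$''. You also leave out the determination of whether $\Gso$ is adjoint or simply connected in case (6), which rests on showing $X_\sigma=\pi(Q)$ if and only if $\frac d2\alpha_r\in\pi(Q)$, i.e.\ if and only if $d$ is even.

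In case (5), the claim that only $\SL_{2r+1}$ and $\PGL_{2r+1}$ occur is false: $Z(\SL_{2r+1})$ is cyclic of odd order $2r+1$, possibly composite, and since $\sigma$ inverts it every subgroup is $\sigma$-stable, so intermediate isogeny types do carry the automorphism. Your justification of $\Ts=\Tso$ (``$H$ has odd exponent divisible by $n_\sigma=2$'', which as written is self-contradictory) really only covers the adjoint and simply connected lattices, where $\sigma$ permutes a basis and Lemma \ref{A(Ts)}(2) applies; for intermediate lattices one needs either the paper's general computation (which with $l=2r$ forces $d$ odd, hence $H$ trivial) or an argument that $H$ has exponent dividing $2$ while embedding in a group of odd order. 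Finally, ``the appearance of $2\bar\alpha^\vee$ versus $\bar\alpha^\vee$'' distinguishes the \emph{type} $B_r$ versus $C_r$ but not the isogeny class; the paper's actual argument is that for symplectic $\sigma$ one has $\pi(\alpha)\in\pi(Q)\subset X(\Tso)$ while $\pi(\alpha)\notin Q(\Sigma_\sigma)$ (the corresponding simple root being $2\pi(\alpha)$), so $X(\Tso)\supsetneq Q(\Sigma_\sigma)$ and $\Gso$ is simply connected, with a dual argument on the coroot side in the orthogonal case. These steps need to be supplied before the proof is complete.
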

\begin{proof}  
That $\bG$ semisimple implies $\Gso$ semisimple is a consequence of 
Proposition \ref{Gs ss}(3).

Observe  that  since  $\sigma$  is  quasi-central,  proving that
$\Gs=\Gso$  is  equivalent  to  proving  that  $\Ts=\Tso$; this results from
Proposition
\ref{AG(ts)}, the exact sequence of Proposition \ref{exactsequence} and
the fact that $W(\sigma)=W^0(\sigma)$ by Proposition \ref{Wsigma=Wosigma}(2). In
particular,  if  $\bG$  is  adjoint  or  simply connected, then $\Gs=\Gso$
since in these cases we have $\Ts=\Tso$ by Lemma \ref{A(Ts)}(2).

By  Lemma  \ref{X(Tso)}  we  have $X(\Tso)=X_\sigma$. The
weight lattice $P(\Sigma_\sigma)$
\index{P(Sigmas@$P(\Sigma_\sigma)$}
of $\Sigma_\sigma$ is the lattice dual to
$\Sigma_\sigma^\vee$.  If there is no special orbit $\Sigma_\sigma^\vee$
has basis $\{\bar\alpha^\vee\mid\alpha\in\Pi\}$, hence
$P(\Sigma_\sigma)=\pi(P)$ where $P$ is the weight lattice of $\Sigma$. Thus
if $P=X$ then $P(\Sigma_\sigma)=X(\Tso)$ which shows (2).

If there is no special orbit, since $\Sigma_\sigma=\pi(\Sigma)$, we get
that the  root  lattice  $Q(\Sigma_\sigma)$
\index{Q(Sigmas)@$Q(\Sigma_\sigma)$}
of  $\Sigma_\sigma$  is
$\pi(Q)$  where $Q$ is the  root lattice of $\Sigma$. Thus
if $Q=X$ then $Q(\Sigma_\sigma)=\pi(X)=X(\Tso)$ which shows (1).

If  $\bG$  is  of  type  $D_4$,  then  with  the notations of \cite[Planche
IV]{Bou}  the fundamental weights are given in  term of the simple roots by
$\varpi_1=\alpha_1+\alpha_2+\alpha_3/2+\alpha_4/2,
\varpi_2=\alpha_1+2\alpha_2+\alpha_3+\alpha_4,
\varpi_3=\alpha_1/2+\alpha_2+\alpha_3+\alpha_4/2,
\varpi_4=\alpha_1/2+\alpha_2+\alpha_3/2+\alpha_4$.  Thus modulo $Q$ we have
$2\varpi_1=2\varpi_3=2\varpi_4=\varpi_2=0$ and
$\varpi_1=\varpi_3+\varpi_4$.  Hence we  may take  $\varpi_3,\varpi_4$ as a
basis  of $P/Q\simeq(\BZ/2\BZ)^2$. The automorphism $\sigma$ of order
$n_\sigma =3$ of
$\Sigma$  is given by  the permutation $\varpi_1\mapsto  \varpi_3 \mapsto
\varpi_4\mapsto \varpi_1$.  Its matrix in the
basis $\varpi_3,\varpi_4$ of $P/Q$ is
$\begin{pmatrix}0&1\\1&1\end{pmatrix}$, which permutes circularly the three
subgroups of order 2 of $P/Q$. Thus no lattice intermediate between $P$ and
$Q$  is $\sigma$-stable, which  implies that only  the simply connected and
adjoint  group have an algebraic automorphism ``triality'' corresponding to
$\sigma$.

When   $\bG$  is  of  type   $D_{r}$  and  $n_\sigma =2$,  then $\Gso$ is of
type $B_{r-1}$ so that $|P(\Sigma_\sigma)/Q(\Sigma_\sigma)|=|\pi(P)/\pi(Q)|=2$.
If  $r$  is  odd
$P/Q\simeq\BZ/4\BZ$ and there is only one intermediate lattice $Q\subsetneq
X\subsetneq  P$ which  corresponds to  $\SO_{2r}$. If  $r$ is even then
$P/Q\simeq(\BZ/2\BZ)^2$  and there are 3 possible intermediate lattices. In
this  case  the  action  of  $\sigma$  fixes  the  lattice corresponding to
$\SO_{2r}$  and permutes  the two  others (see \cite[page 31]{deriziotis}).
This  gives the first assertion of  (4). If $\bG=\SO_{2r}$, let $\varpi_1$,
\dots,  $\varpi_r$  be  the  fundamental  weights,  with $\sigma$ exchanging
$\varpi_{r-1}$  and $\varpi_r$  (see \cite[Planche  IV, p.\ 256--257]{Bou});
then    $X$    is    spanned    by    $\{\varpi_1,\ldots,\varpi_{r-2},
\varpi_{r-1}-\varpi_r,\varpi_{r-1}+\varpi_r\}$. Thus
$\frac1 2(\varpi_{r-1}+\varpi_r)\in\pi(P)-X_\sigma$ so that
$X_\sigma\neq\pi(P)$.  Since $|\pi(P)/\pi(Q)|=2$  we get $X_\sigma=\pi(Q)$
and $\Gso$ is adjoint. The image $(\sigma-1)X$ is spanned by
$2(\varpi_{r-1}-\varpi_{r})$ while the kernel of $1+\sigma$ is spanned
by $\varpi_{r-1}-\varpi_{r}$, thus according to Lemma \ref{A(Ts)}(1) we have
$|\Ts/\Tso|=2$. A representative is $-1\notin\Tso$, whence (4).

Assume  now  that  $\bG$  is  of  type $A_r$. Then the fundamental group
$P(\Sigma_\sigma)/Q(\Sigma_\sigma)$  has order 2. Hence to show that $\Gso$
is  simply  connected  (resp.~adjoint)  it  is  sufficient  to  show that
$X(\Tso)$ is different from $Q(\Sigma_\sigma)$ (resp.~from
$P(\Sigma_\sigma)$).

Assume first that $\bG$ is of type $A_{2r}$ and that the orbit of $\alpha$ is
special.  If $\sigma$  is of  symplectic type  then $\pi(\alpha)$ is not in
$Q(\Sigma_\sigma)$  but is in $\pi(Q)\subset X(\Tso)$.
Hence $\Gso$ is simply connected.
If  $\sigma$ is  of orthogonal  type, then  $\bar\alpha^\vee$ is not in the
coroot lattice $Q(\Sigma_\sigma^\vee)$
\index{Q(Sigmasv)@$Q(\Sigma_\sigma^\vee)$}
hence $P(\Sigma_\sigma)$ is strictly
larger  than the dual of the lattice spanned by the $\bar\alpha^\vee$ which
contains $X_\sigma=X(\Tso)$, hence $\Gso$ is adjoint.

Assume now that $\bG$  is of type  $A_{2r-1}$. If $|P/X|=d$, then
with  the  notations  of  \cite[Planche  I]{Bou},  $X$ is generated by
$\alpha_1,\ldots,\alpha_{2r-1},d\varpi_{2r-1}$,   thus   $X_\sigma$   is
generated by $\pi(Q)$ and $\pi(d\varpi_{2r-1})$. Thus $X_\sigma=\pi(Q)$ if and
only if $\pi(d\varpi_{2r-1})=\frac
d2(\alpha_1+\ldots+\alpha_{2r-1})\in\pi(Q)$, whence the result since $\frac
d2(\alpha_1+\ldots+\alpha_{2r-1})\equiv \frac d2\alpha_r\pmod{\pi(Q)}$.

We prove finally the assertions on $\Gs$ in cases (5) and (6).
Let $\bG$ be of type $A_l$ and  let $d=|P/X|$, a divisor of $l+1$.
We keep the notation from \cite[Planche  I]{Bou}:
the lattice $X$ is spanned by $Q$ and $d\varpi_l$.
We  want to  use Lemma  \ref{A(Ts)}(1). For  computing $\Ker(\sigma+1)$ and
$(\sigma-1)X$ we introduce a suitable basis of $X$. Let
$i_1:=2(\alpha_{l-1}-\alpha_2)+3(\alpha_{l-2}-\alpha_3)+\cdots+
\lfloor\frac{l}{2}\rfloor(\alpha_{\lfloor
\frac{l+3}{2}\rfloor}-\alpha_{\lfloor\frac{l}{2}\rfloor})$
and $i_0:=(\alpha_l-\alpha_1)+i_1$.  We use the notation $\lambda:=\begin{cases}
0&\text{if $l$ is even}\\\frac{d}{2}\alpha_{(l+1)/2}&\text
{if $l$ is odd}\end{cases}$  and  $\mu:=-\frac{d}{l+1}  i_0+\lambda$;  then  we
claim  that $\{\alpha_2,\alpha_3,\ldots,\alpha_l,\mu\}$ is a basis of $X$.
Indeed  $X$ has rank $l$, spanned by $\{\alpha_1,\ldots,\alpha_l,d\varpi_l\}$
and  we  have $\alpha_1=\frac{l+1}{d}(\mu-\lambda)+i_1+\alpha_l$ and
$d\varpi_l=\frac{d}{l+1}(\alpha_1+2\alpha_2+3\alpha_3+\cdots+l\alpha_l)=
\mu+d(\alpha_{\lfloor\frac{l+3}2\rfloor}+\ldots+\alpha_l)$  so
that $\mu\equiv d\varpi_l\pmod Q$. We have $\sigma(i_0)=-i_0$ and
$\sigma(\lambda)=\lambda$ so that $\sigma(\mu)=\mu+\frac{2d}{l+1}i_0$.
We deduce that $(\sigma-1)X$ is spanned by
$\{\alpha_l-\alpha_1,\alpha_{l-1}-\alpha_2,\ldots,\alpha_{\lfloor
\frac{l+3}{2}\rfloor}-\alpha_{\lfloor\frac{l}{2}\rfloor},
\frac{2d}{l+1}i_0\}$ that is by
$\{\alpha_{l-1}-\alpha_2,\ldots,i_0,\frac{2d}{l+1}i_0\}$, since 
$\alpha_l-\alpha_1=i_0-i_1$ and $i_1$ is an integral combination
of $\{\alpha_{l-1}-\alpha_2,\ldots\}$.

We compute now the kernel of $1+\sigma$. We have
$(1+\sigma)(a_2\alpha_2+\cdots+a_l\alpha_l+b\mu)=
(a_2+a_{l-1})(\alpha_2+\alpha_{l-1})+(a_3+a_{l-2})(\alpha_3+\alpha_{l-2})+\cdots
+a_l(\alpha_l+\alpha_1)+2b\lambda\}$.
Since in this sum $\alpha_l+\alpha_1$ is the only term with a non-zero coefficient 
on $\mu$ when written in the above basis of $X$,
the elements of the kernel of $1+\sigma$ have $a_l=0$, $a_i=-a_{l+1-i}$  for
$i=2,\ldots,\lfloor\frac l2\rfloor$ and, if $l$ is odd $db=2a_{\frac{l+1}2}$ (otherwise $b$ is
arbitrary). We get that $\ker(\sigma+1)$ is spanned by
the $\alpha_i-\alpha_{l+1-i}$ for $i=2,\ldots,\lfloor\frac l2\rfloor$ and $\mu-\lambda$ if
$\mu-\lambda\in X$ otherwise by the $\alpha_i-\alpha_{l+1-i}$ and $2(\mu-\lambda)$.
Since $\mu-\lambda$ is in $X$ if and only if $\lambda=0$ or $l$ is odd, we get 
$\ker(1+\sigma)=\genby{(\alpha_i-\alpha_{l+1-i})_{i=2,\ldots,\lfloor\frac l2\rfloor},
\frac d{l+1}i_0}$ if $l$ is even or $d$ is
even, and
$\ker(1+\sigma)=\genby{(\alpha_i-\alpha_{l+1-i})_{i=2,\ldots,\lfloor\frac
l2\rfloor},\frac {2d}{l+1}i_0}$
if $l$ and $d$ are odd. Hence $\ker(1+\sigma)/(\sigma-1)X$ is non-trivial 
(of order $2$) only
if $l$ is even or $d$ is even and $\frac d{l+1} i_0$ is not in the span of
$\genby{i_0,\frac{2d}{l+1}i_0}$ thus if and only if $d$ is even and $2d$ divides $l+1$.
Writing $l=2r-1$ in that case, we get $2d=2|P/X|$ divides $2r$ that is $|P/X|$ divides $r$.
\end{proof}
\section{The root datum $R(\sigma)$ and its affine Weyl group}
We recall that by Context \ref{fix qss}, $\sigma$ denotes a quasi-central
element (a more general \qss\ element is denoted by $t\sigma$).
\begin{context}\label{sigma symplectic}
From now on, when there is a component $\bH$ of $\bG$ of type $A_{2r}$ and
$p\ne 2$, 
we assume that the smallest power $\sigma^i$ which stabilizes that component
is such that $\bH^{\sigma^i}$ is of type $A_{2r}$ or $C_r$
(and {\em not} of type $B_r$), that is on that component $\sigma^i$ acts
trivially or is of symplectic type. 
\end{context}
Note that when $p=2$ there is no choice to make:
$\bH^{\sigma^i}$ is of type $A_{2r}$ if $\sigma^i$ acts trivially on $\bH$
and of type $C_r$ otherwise.
Context \ref{sigma symplectic}
chooses in every case a well-defined conjugacy class of
quasi-central automorphisms $\sigma$.
\begin{proposition}\label{Csigma,O(alpha)} It follows from the choice
made in Context \ref{sigma symplectic} that for $\alpha\in\Sigma$ we
have $C_{\sigma,\CO(\alpha)}=1$, unless $\CO(\alpha)$ is special and $p\neq 2$,
in which case $C_{\sigma,\CO(\alpha)}=-1$.
\end{proposition}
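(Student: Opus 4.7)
The plan is to reduce to $\bG$ quasi-simple and then dispatch the absence or presence of special orbits separately. First I would observe: let $\bH$ denote the quasi-simple component of $\bG^0$ containing $\bU_\alpha$ and let $i$ be the smallest positive integer with $\sigma^i(\bH)=\bH$. Then $|\CO_\sigma(\alpha)|=i\cdot|\CO_{\sigma^i|_\bH}(\alpha)|$, and $\sigma^{|\CO_\sigma(\alpha)|}$ acts on $\bU_\alpha$ as $(\sigma^i|_\bH)^{|\CO_{\sigma^i|_\bH}(\alpha)|}$ does, so $C_{\sigma,\CO(\alpha)}=C_{\sigma^i|_\bH,\CO_{\sigma^i|_\bH}(\alpha)}$. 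Since $\bG^\sigma$ decomposes as a product indexed by $\sigma$-orbits of components, the restriction $\sigma^i|_\bH$ is quasi-central on $\bH$, and by Context \ref{sigma symplectic} it is trivial or of symplectic type on any $A_{2r}$ component. Hence we may assume $\bG$ is quasi-simple.

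When $\bG$ is not of type $A_{2r}$, no orbit is special by Remark \ref{roots orthogonal}. By the structure theory of automorphisms of reductive groups (cf.\ \cite{St}), a representative $\sigma'$ of the quasi-central class may be chosen which preserves a Chevalley pinning strictly: there exist isomorphisms $u_\beta:\Ga\xrightarrow\sim\bU_\beta$ with $\lexp{\sigma'}u_\beta=u_{\sigma'(\beta)}$ for all $\beta\in\Pi$. The extension via Chevalley's commutator relations to all $\beta\in\Sigma$ presents no sign obstruction, since no two distinct roots in a $\sigma$-orbit sum to a root. Hence $\lexp{(\sigma')^{|\CO|}}u_\alpha=u_\alpha$, giving $C_{\sigma',\CO}=1$, and by conjugacy-invariance of $C$ (the quasi-central class being unique here) we conclude $C_{\sigma,\CO}=1$.

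For $\bG$ of type $A_{2r}$ with $p\neq 2$, I would fix a representative $\sigma_\mathrm{orth}$ of the orthogonal-type class (constructed as in the previous paragraph on simple roots) and write $\sigma=t_0\sigma_\mathrm{orth}$ by Notation \ref{symplectic type}. Formula \eqref{CtsO} gives $C_{\sigma,\CO(\alpha)}=\bar\alpha(t_0)\cdot C_{\sigma_\mathrm{orth},\CO(\alpha)}$. The pinning argument applies to non-cospecial orbits for $\sigma_\mathrm{orth}$, yielding $C_{\sigma_\mathrm{orth},\CO}=1$ there. For each special orbit $\CO=\{\alpha,\sigma\alpha\}$ with associated cospecial $\CO'=\{\alpha+\sigma\alpha\}$, the Chevalley anti-commutativity $\sigma_\mathrm{orth}[x_\alpha,x_{\sigma\alpha}]=[x_{\sigma\alpha},x_\alpha]=-[x_\alpha,x_{\sigma\alpha}]$ obstructs strict pinning preservation on $\CO'$, and an explicit rank-two computation inside $\genby{\bU_{\pm\alpha},\bU_{\pm\sigma\alpha}}$ (analogous to the matrix model in the proof of Proposition \ref{root system of Gtso}) gives $C_{\sigma_\mathrm{orth},\CO}=1$ and $C_{\sigma_\mathrm{orth},\CO'}=-1$. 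The twist $t_0$ satisfies $\bar\alpha(t_0)=-1$ on the common character of each special-cospecial pair and $\bar\beta(t_0)=1$ on remaining orbit characters, yielding $C_{\sigma,\CO_{\mathrm{spec}}}=-1$, $C_{\sigma,\CO_{\mathrm{cospec}}}=1$, and $C_{\sigma,\CO}=1$ elsewhere. The case $p=2$ is automatic since $-1=1$. The main obstacle is the sign tracking for $\sigma_\mathrm{orth}$ on cospecial orbits; this reduces to the Chevalley identity within the rank-two subgroup attached to a special orbit.
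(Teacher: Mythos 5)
Your conclusion and the overall strategy are sound, but your route is genuinely different from the paper's: the paper's proof of this proposition is a bare citation of \cite[1.8(v) and the second example page 357]{grnc} and \cite[2.5]{qss}, i.e.\ it outsources exactly the two facts you reconstruct by hand. What you do instead is (i) reduce to a quasi-simple component via $C_{\sigma,\CO(\alpha)}=C_{\sigma^i|_\bH,\CO_{\sigma^i|_\bH}(\alpha)}$, (ii) evaluate $C$ on a pinning-preserving representative, and (iii) transport the answer to the symplectic-type class by the twisting formula \eqref{CtsO}. This buys a self-contained argument; note that step (iii) is legitimate because \eqref{CtsO} in fact holds for every $t\in\bT$ and because $\bar\alpha$ kills $Z(\bG)$, so $C$ is constant on classes of quasi-central \emph{automorphisms} — which is the uniqueness statement of Notation \ref{symplectic type} — even though the class of quasi-central \emph{elements} is usually not unique (Corollary \ref{centerR(sigma)}). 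You should say this explicitly, since "the quasi-central class being unique" is false as stated for elements.

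Two steps carry the real weight and are asserted rather than proved. First, that a pinning-preserving $\sigma'$ has $C_{\sigma',\CO}=1$ for every non-cospecial orbit: this is Steinberg's sign lemma, proved by induction on height using the commutator formula, and the per-orbit obstruction is governed by $\CO$ being \emph{cospecial} (a root of $\CO$ is a sum of two roots in a common orbit), not by $\CO$ being special; the two conditions coincide globally but not orbit by orbit, and your own use of the lemma in the $A_{2r}$ case (applying it to special orbits) needs the cospecial formulation. Relatedly, the rank-two $A_2$ computation only gives the relative sign $C_{\sigma_\mathrm{orth},\CO'}=-C_{\sigma_\mathrm{orth},\CO}$ for a special/cospecial pair; pinning down $C_{\sigma_\mathrm{orth},\CO}=+1$ for the non-simple special orbits still requires the height induction from the simple roots. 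Second, the claim that $\bar\alpha(t_0)=-1$ on special/cospecial characters and $\bar\beta(t_0)=1$ elsewhere needs an argument: quasi-centrality of $t_0\sigma_\mathrm{orth}$ forces $\bar\beta(t_0)=1$ on every non-special, non-cospecial orbit (otherwise $W^0(t_0\sigma_\mathrm{orth})$ would lose a reflection of $W^\sigma$), and then the symplectic-type condition ($\Sigma_{t_0\sigma_\mathrm{orth}}$ of type $C_r$ rather than $B_r$) forces $\bar\alpha(t_0)=-1$ on each special orbit. Neither gap is fatal — both are known lemmas, and verifiable on the explicit $t_0=\diag(i,\ldots,i,1,-i,\ldots,-i)$ — but as written they are precisely the content of the references the paper cites.
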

\begin{proof}
This follows from \cite[1.8(v) and second example page 357]{grnc} and from
\cite[2.5]{qss}.
\end{proof}
We   now  introduce  a  root datum 
$R(\sigma)=(\Phi_\sigma,\Phi_\sigma^\vee, X^\sigma,Y_\sigma)$, 
(recall from Lemma \ref{X(T/LT)} that $X^\sigma=X(\LT)$ and $Y_\sigma=Y(\LT)$) 
which  will have  the advantage over $\Sigma_\sigma$
that the system $\Phi_{t\sigma}$ associated to
$t\sigma\in\Tso\cdot\sigma$ will be a
closed   subsystem  of   $\Phi_\sigma$, closely related to $\Sigma_{t\sigma}$.  The   root  system  introduced  in
\cite[Proposition  2]{Sp} is equal to $\Phi_\sigma$ if there are no special
orbits, but is not as well behaved when they exist.
\begin{proposition}\label{Phi_sigma}\hfill\break
$$\Phi_\sigma:=\{\bar\alpha\mid\alpha\in\Sigma,\CO(\alpha)\text{ not special
or cospecial}\}
\cup\{2\bar\alpha\mid\alpha\in\Sigma,\CO(\alpha)\text{ special}\}
\subset X^\sigma$$
and 
\index{Phis@$\Phi_\sigma$, $\Phi^\vee_\sigma$}
$\Phi^\vee_\sigma:=\{\pi(\alpha^\vee)\mid\alpha\in\Sigma,\CO(\alpha)
\text{ is not cospecial}\}\subset  Y_\sigma$, 
are the roots and the coroots of a reduced root
system for $W^\sigma$ acting as in Proposition \ref{omnibus}(2); the root
corresponding to the coroot $\pi(\alpha^\vee)$ is $\bar\alpha$ or
$2\bar\alpha$.

The set $\Phi_\sigma^+:=\Phi_\sigma\cap\BQ^+\Pi$ is a set of
positive roots for $\Phi_\sigma$ and
$$\Delta_\sigma:=\{\bar\alpha\mid\alpha\in\Pi,\CO(\alpha)\text{ not special}\}
\cup\{2\bar\alpha\mid\alpha\in\Pi,\CO(\alpha)\text{ special}\}$$
is the corresponding set of simple roots.
\index{Ds@$\Delta_\sigma$}
The weight lattice $P(\Phi_\sigma)$ is equal to $P^\sigma$ 
\index{p(Phis)@$P(\Phi_\sigma)$}
\index{P@$P$}
where $P$ is the weight lattice of $\Sigma$.
The fundamental coweights of the root system  $(\Phi_\sigma,\Phi_\sigma^\vee)$
are $\{\pi(\varpi_\alpha^\vee)\mid \alpha\in\Pi,\CO(\alpha)\text{ not
special}\}\cup\{\frac12\pi(\varpi_\alpha^\vee)\mid\alpha\in\Pi,
\CO(\alpha)\text{ special}\}$, where $\{\varpi_\alpha^\vee\mid\alpha\in\Pi\}$
are the fundamental coweights of the root system $\Sigma$.
\end{proposition}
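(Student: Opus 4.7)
My plan is to deduce Proposition \ref{Phi_sigma} from Proposition \ref{root system of Gtso} by showing that $\Phi_\sigma$ and $\Phi_\sigma^\vee$ are obtained from $\Sigma_\sigma$ and $\Sigma_\sigma^\vee$ by a careful rescaling that (a) moves the roots into the $\sigma$-invariant lattice $X^\sigma$ instead of the coinvariant image $X_\sigma$, and (b) enforces reducedness by deleting one of two collinear pairs that arise in the $A_{2r}$ case. The key identities are $\bar\alpha=|\CO(\alpha)|\cdot\pi(\alpha)$ and $\pi(\alpha^\vee)=|\CO(\alpha)|^{-1}\cdot\bar\alpha^\vee$, so each root (resp.\ coroot) of $\Phi_\sigma$ is a positive rational multiple of the corresponding root (resp.\ coroot) of $\Sigma_\sigma$, and the reflection determined by the pair $(\bar\alpha,\pi(\alpha^\vee))$ on the ambient $\BQ$-space coincides with the one already determined by $(\pi(\alpha),\bar\alpha^\vee)$.

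Concretely, I would first record the containments ($\bar\alpha\in X^\sigma$ as a sum over a $\sigma$-orbit; $\pi(\alpha^\vee)\in Y_\sigma$ by definition) and then check the root system axiom at one root by computing $\langle\pi(\alpha^\vee),\bar\alpha\rangle=\tfrac1{|\CO(\alpha)|}\langle\bar\alpha^\vee,\bar\alpha\rangle$, which expands via Remark \ref{roots orthogonal} to $2$ in the non-special case and $1$ in the special case, so that the doubling of $\bar\alpha$ for special orbits precisely restores the value $2$. The reflection $x\mapsto x-\langle\pi(\alpha^\vee),x\rangle\bar\alpha$ then agrees on $X^\sigma\otimes\BQ$, by $\sigma$-invariance of the pairing, with the element of $W^\sigma$ associated to $\pi(\alpha)$ in Proposition \ref{root system of Gtso}, so $W^\sigma$ permutes $\Phi_\sigma$ through its usual action and $(\Phi_\sigma,\Phi_\sigma^\vee)$ is the expected root datum up to reducedness.

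The most delicate step is reducedness, and this is precisely why cospecial orbits are excluded from $\Phi_\sigma$ (and cospecial coroots from $\Phi_\sigma^\vee$): when $\alpha$ is special and $\beta:=\alpha+\sigma^j\alpha\in\Sigma$, a direct summation shows $\bar\beta=\bar\alpha$, whereas the special contribution to $\Phi_\sigma$ is $2\bar\alpha$, so keeping both would destroy reducedness; deleting the cospecial element eliminates this collinearity, and the symmetric argument handles the coroots. Once this is in place, $\Phi_\sigma^+=\Phi_\sigma\cap\BQ^+\Pi$ and the explicit simple system $\Delta_\sigma$ are read off by transporting the analogous descriptions for $\Sigma_\sigma$ from Proposition \ref{root system of Gtso}(4) and from the preceding description of its simple roots, the rescalings preserving the positivity cone $\BQ^+\Pi$.

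Finally, for the weight lattice I would exploit the $\sigma$-invariance of the pairing to rewrite, for $x\in X^\sigma\otimes\BQ$, $\langle\pi(\alpha^\vee),x\rangle=\langle\alpha^\vee,x\rangle$; together with the observation that in type $A_{2r}$ a cospecial coroot $\alpha^\vee=\beta^\vee+\sigma^j\beta^\vee$ is a $\BZ$-combination of special coroots (the ambient system being simply laced), the integrality condition on cospecial coroots is automatic, and hence $P(\Phi_\sigma)=\{x\in X^\sigma\otimes\BQ\mid\langle\alpha^\vee,x\rangle\in\BZ\ \forall\alpha\in\Sigma\}=P^\sigma$. The fundamental coweights are then verified by the direct pairing $\langle\pi(\varpi_\alpha^\vee),\bar\beta\rangle=\delta_{\CO(\alpha),\CO(\beta)}$, with the factor $\tfrac12$ inserted wherever the simple root has been doubled. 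I expect the main obstacle to be the careful bookkeeping around special and cospecial orbits — tracking reducedness and the various scalings — while each individual verification is either routine or a direct transport from Proposition \ref{root system of Gtso}.
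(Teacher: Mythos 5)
Your proposal is correct in substance but reaches the result by a genuinely different route from the paper. The paper does \emph{not} derive $\Phi_\sigma$ from $\Sigma_\sigma$: it quotes \cite[2.3]{qss} directly for the fact that the set $\Phi'_\sigma$ (your $\Phi_\sigma$ with $2\bar\alpha$ replaced by $\bar\alpha$ on special orbits) is already a reduced root system in $X^\sigma\otimes\BQ$ for $W^\sigma$, observes that $\Phi_\sigma$ consists of vectors colinear to those of $\Phi'_\sigma$ and is $W^\sigma$-stable, and then reduces everything to checking that the pairing is integral on $\Phi_\sigma\times\Phi_\sigma^\vee$ with diagonal values $2$ --- exactly the computation $\pairing{\pi(\beta^\vee)}{\bar\alpha}=\pairing{\beta^\vee}{\bar\alpha}$ that you perform. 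Your transport-of-structure argument from Proposition \ref{root system of Gtso} instead starts from the root system of $\Gso$ in the ``other'' lattice $X_\sigma$ and rescales; this buys an explicit dictionary between $\Phi_\sigma$ and $\Sigma_\sigma$ (which the paper needs again later, in Proposition \ref{Phi_tsigma}) and correctly explains \emph{why} the cospecial orbits must be deleted (your observation $\bar\beta=\bar\alpha$ for $\beta=\alpha+\sigma^j\alpha$ is the right one), at the cost of three points you should make explicit: (i) $\Sigma_\sigma$ is a root system for $W^0(\sigma)$, so you must invoke quasi-centrality of $\sigma$ (Proposition \ref{Wsigma=Wosigma}, in force by Context \ref{fix qss}) to get all of $W^\sigma$; (ii) identifying which $\pi(\alpha)$ actually lie in $\Sigma_\sigma$ for this $\sigma$ (special orbits out, cospecial ones in) rests on Proposition \ref{Csigma,O(alpha)}, i.e.\ on the normalization of Context \ref{sigma symplectic}; and (iii) the rescaling changes the lattices from $(X_\sigma,Y^\sigma)$ to $(X^\sigma,Y_\sigma)$, so integrality of the \emph{off-diagonal} pairings $\pairing{\pi(\beta^\vee)}{\bar\alpha}$ is not inherited and must be checked --- your own identity $\pairing{\pi(\beta^\vee)}{x}=\pairing{\beta^\vee}{x}$ for $\sigma$-invariant $x$ delivers it in one line, but it belongs in the root-system verification, not only in the weight-lattice paragraph. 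Your treatment of $P(\Phi_\sigma)=P^\sigma$ (cospecial coroots are redundant since $\Sigma$ is simply laced there) is a clean variant of the paper's dual-basis computation, and the coweight check is the same in both.
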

\begin{proof}
By  \cite[2.3]{qss} and its proof
the  set $\Phi'_\sigma$  obtained from $\Phi_\sigma$ by
replacing  $2\bar\alpha$ with $\bar\alpha$  for special orbits,  is a reduced
root system in $X^\sigma\otimes\BQ$ for the group $W^\sigma$.
Since $W^\sigma$  maps a special  orbit to a special
orbit,  it  stabilizes  $\Phi_\sigma$. Since the vectors in $\Phi_\sigma$ are 
colinear to the vectors in $\Phi'_\sigma$, the reflections of $W^\sigma$ are
reflections with respect to the elements of $\Phi_\sigma$.
To  prove  that $\Phi_\sigma$ is a root
system  and  $\Phi_\sigma^\vee$  is  the  corresponding  set  of coroots it
suffices to prove that the
pairing between $X^\sigma\otimes\BQ$ and $Y_\sigma\otimes\BQ$ takes
integral values on $\Phi_\sigma\times\Phi_\sigma^\vee$ with diagonal values
equal to 2.

For  all  roots  $\alpha$  and  $\beta$ we have $\pairing{\pi(\beta^\vee)}
{\bar\alpha}
=\frac1{|\CO(\beta)|}\sum_{\gamma\in \CO(\beta)}\pairing{\gamma^\vee}
{\bar\alpha}$.
Since the  pairing is $\sigma$-invariant, this is equal
to  $\pairing{\beta^\vee}{\bar\alpha}$ which is an integer. In particular
$\pairing{\pi(\alpha^\vee)}{\bar\alpha}=\pairing{\alpha^\vee}{\bar\alpha}$;
by Remark \ref{roots orthogonal} this is equal to
2 if $\alpha$ is not special and to 1 if $\alpha$ is special, so that in the
latter case $\pairing{\pi(\alpha^\vee)}{2\bar\alpha}=2$.
We have proved the assertions
on $\Phi_\sigma$ and $\Phi_\sigma^\vee$.

The assertions on the positive and simple roots result from the fact that the
root $\bar\alpha$ is a positive multiple of $\pi(\alpha)$.

To  prove the assertion  on the weights,  first notice that the fundamental
weights  $\{\varpi_\alpha\mid \alpha\in\Pi\}$  are permuted  by $\sigma$
since they are the dual basis to the simple coroots. Thus
$\{\sum_{\beta\in\CO}\varpi_\beta\}_{\CO\in\Pi/\sigma}$   is  a   basis  of
$P^\sigma$. Now
for all simple roots $\alpha$ and $\gamma$
$$\sum_{\beta\in\CO(\alpha)}\varpi_\beta(\pi(\gamma^\vee))=     \begin{cases}
1&\mbox{  if $\gamma\in\CO(\alpha)$ and  $\CO(\alpha)$ is not cospecial,}\\
0&  \mbox{ otherwise.} \end{cases} $$ This shows that $P^\sigma$ is
indeed dual to $\BZ\Phi_\sigma^\vee$.

The assertion on coweights comes from a similar computation: 
for $\alpha,\beta\in\Pi$ we have
$\pairing{\pi(\varpi_\alpha^\vee)}{\bar\beta}=\pairing{\varpi_\alpha^\vee}
{\bar\beta}=
\delta_{\CO(\alpha),\CO(\beta)}$, by invariance of the pairing. If $\CO(\alpha)$ is not
special then $\bar\alpha\in\Delta_\sigma$ and the corresponding fundamental coweight
is $\pi(\varpi^\vee_\alpha)$; if $\CO(\alpha)$ is special then $2\bar\alpha\in\Delta_\sigma$
and the corresponding fundamental coweight is $\frac12\pi(\varpi^\vee_\alpha)$.
\end{proof}
The following proposition is, for the root datum 
$R(\sigma)$, the analogue  of Proposition \ref{type of Gso} for
the root datum $(\Sigma_\sigma,\Sigma^\vee_\sigma,X_\sigma,Y^\sigma)$
\begin{proposition}\label{R(sigma)}
Let $R(\sigma)$ be the root datum
\index{Rs@$R(\sigma)$}
$(\Phi_\sigma,\Phi^\vee_\sigma,X^\sigma,Y_\sigma)$. 
Assume that $\bG$ is quasi-simple; then
$Y_\sigma\otimes\BQ=\BQ\Phi^\vee_\sigma$,  that is $R(\sigma)$ is
semisimple, and
\begin{enumerate}
\item  If $\bG$ is  adjoint not of type $A_{2r}$ then $R(\sigma)$ is of adjoint type.
\item  If $\bG$ is  simply connected then $R(\sigma)$ is
of simply connected type.
\item If $\bG=\SO_{2r}$ and $n_\sigma=2$, then $R(\sigma)$ is of type $C_{r-1}$
simply connected.
\item If $\bG$ is of type $A_{2r}$ and $n_\sigma=2$
then $R(\sigma)$ is of type  $C_r$ simply connected.
\item When $\bG$ is of type $A_{2r-1}$ and $n_\sigma=2$ then if $|P^\vee/Y|$ is even,
where $P^\vee$ is the coweight lattice of $\Sigma$, then
$R(\sigma)$ is of type $B_r$ simply connected; otherwise $R(\sigma)$ is of type $B_r$ adjoint.
\end{enumerate}
\end{proposition}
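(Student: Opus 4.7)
The plan is to parallel the proof of Proposition~\ref{type of Gso}, substituting the duality $(X^\sigma,Y_\sigma)\simeq(X(\LT),Y(\LT))$ of Lemma~\ref{X(T/LT)} and the identity $P(\Phi_\sigma)=P^\sigma$ of Proposition~\ref{Phi_sigma}. Simply connected type for $R(\sigma)$ then amounts to $X^\sigma=P^\sigma$, or dually $Y_\sigma=Q(\Phi_\sigma^\vee)$; adjoint type amounts to $X^\sigma=Q(\Phi_\sigma)$, or dually $Y_\sigma=P(\Phi_\sigma^\vee)$. For semisimplicity, $\bG$ semisimple gives $Y\otimes\BQ=\BQ\Sigma^\vee$, and applying $\pi$ yields $Y_\sigma\otimes\BQ=\BQ\Phi_\sigma^\vee$.

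Cases (1) and (2) follow generally. If $\bG$ is adjoint not of type $A_{2r}$, there are no special orbits, $\Phi_\sigma=\{\bar\alpha\mid\alpha\in\Sigma\}$, and both $Q(\Phi_\sigma)$ and $X^\sigma=Q^\sigma$ equal $\bigoplus_{\CO\in\Pi/\sigma}\BZ\bar\alpha$ since $\sigma$ permutes $\Pi$. If $\bG$ is simply connected, $Y=Q^\vee$ and Proposition~\ref{Phi_sigma} identifies the simple coroots of $R(\sigma)$ as $\{\pi(\alpha^\vee)\mid\alpha\in\Pi\}$ (with $2\bar\alpha$ having coroot $\pi(\alpha^\vee)$ in the special case), so $Q(\Phi_\sigma^\vee)=\pi(Q^\vee)=Y_\sigma$.

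For (3)--(5) the Dynkin types $C_{r-1},C_r,B_r$ come from computing the Cartan matrix of $\Delta_\sigma$ using $\pairing{\pi(\alpha^\vee)}{\bar\beta}=\pairing{\alpha^\vee}{\bar\beta}$. For (3), the basis of $X(\SO_{2r})$ from the proof of Proposition~\ref{type of Gso}(4) gives $X^\sigma=\BZ\langle\varpi_1,\ldots,\varpi_{r-2},\varpi_{r-1}+\varpi_r\rangle=P^\sigma$, so $R(\sigma)$ is simply connected.

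For (4) and (5) I work dually. In both types $\sigma$ acts as $-1$ on $P^\vee/Q^\vee$, so for any $y\in P^\vee$ one has $y+\sigma y\in(Q^\vee)^\sigma$ and $\pi(y)=(y+\sigma y)/2$, reducing the question to whether $(Q^\vee)^\sigma\subseteq 2\pi(Q^\vee)$. In $A_{2r}$ there are no $\sigma$-fixed simple roots and a direct check gives $(Q^\vee)^\sigma=\bigoplus_{i=1}^r\BZ(\alpha_i^\vee+\alpha_{2r+1-i}^\vee)=2\pi(Q^\vee)$; hence $\pi(y)\in\pi(Q^\vee)$ for every $y\in Y$, proving (4) for any intermediate $Y$. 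In $A_{2r-1}$ the fixed simple root $\alpha_r$ creates $(Q^\vee)^\sigma/2\pi(Q^\vee)\cong\BZ/2$, detected by the parity of the coefficient on $\alpha_r^\vee$. A generator $y_0$ of $Y/Q^\vee\subseteq P^\vee/Q^\vee\cong\BZ/2r$ can be taken as $y_0=|P^\vee/Y|\varpi_1^\vee$, and a direct computation gives $y_0+\sigma y_0=|P^\vee/Y|\sum_j\alpha_j^\vee$, with coefficient $|P^\vee/Y|$ on $\alpha_r^\vee$. Thus $\pi(Y)=\pi(Q^\vee)$ iff $|P^\vee/Y|$ is even (simply connected case); otherwise $\pi(Y)$ strictly contains $Q(\Phi_\sigma^\vee)$ and hence equals $P(\Phi_\sigma^\vee)$, since the quotient has order $2$ in type $B_r$ (adjoint case). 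The main obstacle is this parity bookkeeping for (5).
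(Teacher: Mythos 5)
Your proof is correct, and its overall strategy --- reduce everything to comparisons of lattices inside $X\otimes\BQ$ and $Y\otimes\BQ$ via the dualities $(X^\sigma,Y_\sigma)$ and the identity $P(\Phi_\sigma)=P^\sigma$ --- is the same as the paper's, which explicitly dualizes the computations of Proposition \ref{type of Gso}. But several cases are handled by genuinely different local arguments. For (2) you prove $Q(\Phi_\sigma^\vee)=\pi(Q^\vee)$ once and for all (the simple coroots of $R(\sigma)$ are the $\pi(\alpha^\vee)$, $\alpha\in\Pi$, even for special orbits), which covers type $A_{2r}$ uniformly; the paper instead proves (2) outside type $A_{2r}$ via $X^\sigma=P^\sigma=P(\Phi_\sigma)$ and defers $A_{2r}$ to the argument for (4). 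For (4) you show directly that $\sigma=-1$ on $P^\vee/Q^\vee$ forces $\pi(P^\vee)=\pi(Q^\vee)$ (since $(Q^\vee)^\sigma=2\pi(Q^\vee)$ when no simple root is fixed), so $Y_\sigma=Q(\Phi_\sigma^\vee)$ for every intermediate $Y$; the paper instead exhibits the fundamental coweight $\frac12\pi(\varpi_\alpha^\vee)$ of $\Phi_\sigma$ lying outside $Y_\sigma$ and invokes $|P(\Phi_\sigma^\vee)/Q(\Phi_\sigma^\vee)|=2$. Your (3) works on the character side ($X^\sigma=P^\sigma$ from the explicit basis of $X(\SO_{2r})$) where the paper works with cocharacters, and your (5) is, after unwinding, the same parity computation as the paper's (the coefficient of $\alpha_r^\vee$ in $\pi(|P^\vee/Y|\varpi^\vee)$), just organized through the $-1$ action on $P^\vee/Q^\vee$. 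The net effect is a somewhat more self-contained and uniform treatment of (2) and (4); what it gives up is the paper's economy of simply citing the computations already done for Proposition \ref{type of Gso}. The only place where you are terser than the paper is the identification of the Dynkin types $C_{r-1}$, $C_r$, $B_r$, which you delegate to ``computing the Cartan matrix''; the paper at least records which orbit produces the long (resp.\ short) simple root, and it would be worth adding that one line.
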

\begin{proof}
That $\bG$ semisimple implies $R(\sigma)$ semisimple
is an immediate consequence of Proposition \ref{Gs ss}(2) since
$Y_\sigma\otimes\BQ=\BQ\Phi^\vee_\sigma$ is clearly equivalent to
$Y^\sigma\otimes\BQ=\BQ\Sigma_\sigma^\vee$.

Recall that $P$ and $Q$ denote the weight and root lattice of
$\Sigma$. We denote by $P^\vee$ and  $Q^\vee$ \index{P @$P^\vee$}\index{Q @$Q^\vee$}
the coweight and  coroot lattice of $\Sigma$.

If there is no special orbit the root lattice $Q(\Phi_\sigma)$ of
$\Phi_\sigma$ is equal to $Q^\sigma$.
\index{Q(Phis)@ $Q(\Phi_\sigma)$}
Hence, if $X=Q$ then $X^\sigma=Q(\Phi_\sigma)$, whence (1).

Similarly, if $X=P$ then $X^\sigma=P^\sigma=P(\Phi_\sigma)$,
the last equality by Proposition \ref{Phi_sigma}, whence (2) for type
not $A_{2r}$. For this last type, see the proof of (4).

If $\bG=\SO_{2r}$ we use the same computation as in the proof of
Proposition \ref{type of Gso}(4), replacing weights with coweights
and $X$ with $Y$. This gives
$Y_\sigma\neq\pi(P^\vee)$ so that
$Y_\sigma=\pi(Q^\vee)$ since $|\pi(P^\vee)/\pi(Q^\vee)|=2$,
whence (3) since $\pi(Q^\vee)$ is the coroot lattice of
$\Phi_\sigma$. There is one orbit of simple
roots with 2 elements, which gives a long simple root in $\Phi_\sigma$
so that the type of $R(\sigma)$ is $C_{r-1}$.

If $\bG$ is of type $A_r$ we have $|P(\Phi_\sigma^\vee)/Q(\Phi_\sigma^\vee)|=2$.
Hence to show that $R(\sigma)$ is of adjoint type (resp.~of simply connected type) it is
sufficient to show that $Y_\sigma$ is different from $Q(\Phi_\sigma^\vee)=
\pi(Q^\vee)$ (resp.~from $P(\Phi_\sigma^\vee)=\pi(P^\vee)$).

If $\bG$ is of type $A_{2r}$ and $n_\sigma=2$, then
for $\alpha\in\Pi$ such that $\CO(\alpha)$ is special,  
$\frac12\pi(\varpi_\alpha^\vee)=\frac14(\varpi_\alpha+\varpi_{\sigma(\alpha)})$
is a coweight of $\Phi_\sigma$ but is not in $\pi(P^\vee)$,
hence is not in $Y_\sigma=\pi(Y)$. Thus $Y_\sigma\neq \pi(P^\vee)$, and
$R(\sigma)$ is of simply connected type. There is one special orbit of simple roots
which gives a long simple root in $\Phi_\sigma$ so that $R(\sigma)$ is of type
$C_r$.

Assume $\bG$ of type $A_{2r-1}$ and $n_\sigma=2$.
Since exchanging the roots and the coroots in type $A$ gives an isomorphic
root system, we can make the same computation as in the proof of Proposition
\ref{type of Gso}(6), exchanging $Q$ with $Q^\vee$,  $P$ with $P^\vee$,
and $X$ with $Y$. This gives that $Y_\sigma=\pi(Q^\vee)$ if and only if 
$|P^\vee/Y|$ is even, whence the result. There is no special orbit of simple roots 
and one orbit is a singleton, so that this orbit gives a short root in $\Phi_\sigma$
and the type is $B_r$.
\end{proof}

We recall that
$(W^\sigma,S_\sigma)$ is a Coxeter system (see Proposition \ref{root system of Gtso}(2)).

\begin{proposition}\label{aff}\index{I@I}
Let 
$\Wa:=Q(\Phi_\sigma^\vee)\rtimes W^\sigma$ be the affine Weyl group where
\index{Wa@$\Wa$}
\index{affine Weyl group}
\index{Q(phisv)@$Q(\Phi_\sigma^\vee)$}
$Q(\Phi_\sigma^\vee)=\BZ\Phi_\sigma^\vee$ is the coroot lattice.
Let  us denote by  $(W^\sigma_i)_{i\in I}$ the  irreducible components of
\index{Wsi@$W^\sigma_i$}
$W^\sigma$  and  by  $\Phi_{\sigma,i}$  (resp.~$S_{\sigma,i}$)  the 
corresponding subsets of
\index{Phisi@$\Phi_{\sigma,i}$}
\index{Ssi@$S_{\sigma,i}$}
$\Phi_\sigma$ (resp.~$S_\sigma$).
For $i\in I$, let $s_{0,i}$ be the reflection with respect to the 
\index{S0i@$s_{0,i}$}
hyperplane $\alpha_{0,i}(x)=1$, 
\index{a0i@$\alpha_{0,i}$}
where $\alpha_{0,i}$ is the highest root of $\Phi_{\sigma,i}$ for
the order $\Phi_\sigma^+$. Finally,  let $\tilde S_{\sigma,i}:=S_{\sigma,i}
\index{Stsi@$\tilde S_{\sigma,i}$}
\cup\{s_{0,i}\}$ and let $\tilde S_\sigma=
\cup_i\tilde S_{\sigma,i}$.
\index{Sts@$\tilde S_\sigma$}
\begin{enumerate}
\item
$(\Wa,\tilde S_\sigma)$ is a Coxeter system.
\item 
The closure $\CC$ of the fundamental alcove 
\index{C@$\CC$}
of the hyperplane system of $\Wa$ is a
fundamental domain for $\Wa$ acting on $\BQ\Phi_\sigma^\vee$. It is the product of the 
simplices $\CC_i$ with vertices 
\index{Ci@$\CC_i$}
$0$ and $\{\frac1{n_s}\varpi_s^\vee\mid s\in S_{\sigma,i}\}$, where
$\{\varpi_s^\vee\}_{s\in S_\sigma}$ are the fundamental coweights of the 
\index{osv@$\varpi_s^\vee$}
root system $(\Phi_\sigma,\Phi_\sigma^\vee)$  and 
$\sum_{s\in S_{\sigma,i}} n_s\alpha_s$ \index{ns@$n_s$} is the
expression of $\alpha_{0,i}$ in term of the simple roots.
\item 
$\tilde S_\sigma$ is the set of
reflections with respect to the walls of $\CC$.
\item
\index{Dst@$\tilde\Delta_\sigma$}
Let $\tilde\Delta_\sigma:=\Delta_\sigma\cup\{-\alpha_{0,i}\mid i\in I\}$.
For each $i\in I$, let $J_i:=\{s\in S_{\sigma,i}\mid  n_s=1\}$.
\index{Ji@$J_i$}
and for $s\in J_i$ let $z_s:= w_{S_{\sigma,i}-\{s\}}w_{S_{\sigma,i}}$; then $z_s$
\index{zs@$z_s$}
is an automorphism of $\tilde\Delta_\sigma$ and 
$\prod_{i\in I}(\{z_s\mid s\in J_i\}\cup\{\Id\})$ is the group
of all automorphisms of $\tilde\Delta_\sigma$ induced by $W^\sigma$.
\item 
Let  $\Wa':=P(\Phi_\sigma^\vee)\rtimes W^\sigma$ be  the semi-direct product
\index{Wap@$\Wa'$}
\index{P(phisv)@$P(\Phi_\sigma^\vee)$}
by the coweight lattice, giving a natural isomorphism 
$j:\Wa'/\Wa\xrightarrow\sim P(\Phi_\sigma^\vee)/Q(\Phi_\sigma^\vee)$.
Then $\prod_{i\in I}(\{0\}\cup\{\varpi_s^\vee\mid s\in J_i\})$ is a
set of representatives of
$P(\Phi_\sigma^\vee)/Q(\Phi_\sigma^\vee)$.
For $s\in J_i$, let  $\gamma_s$ be the composed of $z_s$ and the
\index{gs@$\gamma_s$}
translation  by $\varpi^\vee_s$;
then $\gamma_s(\CC_i)=\CC_i$.
Composing $j$ with the Cartesian product over $I$ of the maps
which send $\varpi^\vee_s$ to $\gamma_s$ and $0$ to $\Id$ we 
get a group isomorphism from $\Wa'/\Wa$ to the automorphisms of $\CC$ induced 
by $\Wa'$.
\end{enumerate}
\end{proposition}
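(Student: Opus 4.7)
The plan is to apply the classical theory of affine Weyl groups of a reduced root system (the standard reference being \cite[VI \S 2]{Bou}). By Proposition \ref{Phi_sigma} the pair $(\Phi_\sigma, \Phi_\sigma^\vee)$ is a reduced root system with Weyl group $W^\sigma$ acting on $Y_\sigma \otimes \BQ$, and since all the data in the statement (the affine Weyl group, the fundamental alcove and its walls, the coweight lattice, the sets $J_i$) factor across irreducible components, it suffices to treat one irreducible component and then take products.

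Items (1), (2) and (3) are direct transcriptions of Bourbaki's results (\cite[VI \S 2, Prop.\ 5]{Bou} and the theorems following it): $\Wa$ is a Coxeter group on $\tilde S_\sigma$; the closed fundamental alcove is a simplex with vertices $0$ and the $\tfrac1{n_s}\varpi_s^\vee$, and is a fundamental domain for $\Wa$; and $\tilde S_\sigma$ is the set of reflections in its walls. For (5), the isomorphism $j$ is the obvious one coming from the semi-direct product structure, and the fact that $\{0\} \cup \{\varpi_s^\vee \mid s \in J_i\}$ is a system of representatives of $P(\Phi_{\sigma,i}^\vee)/Q(\Phi_{\sigma,i}^\vee)$ is classical (\cite[VI \S 2, no.\ 3, Prop.\ 6]{Bou}).

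The only non-routine parts are (4) and the claim in (5) that $\gamma_s$ stabilizes $\CC_i$, and the two are intertwined. The strategy is to first show $\gamma_s(\CC_i) = \CC_i$ by checking that $\gamma_s$ permutes the vertex set of $\CC_i$: we have $\gamma_s(0) = \varpi_s^\vee$ since $z_s$ is linear and fixes the origin, and for the other vertices $\tfrac 1{n_t}\varpi_t^\vee$ one computes $z_s(\tfrac 1{n_t}\varpi_t^\vee) + \varpi_s^\vee$ and verifies it is again a vertex. Once $\gamma_s(\CC_i) = \CC_i$ is established, $\gamma_s$ permutes the walls of $\CC_i$, so $z_s$ permutes $\tilde\Delta_\sigma$, proving the first half of (4). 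A cardinality argument then finishes: the stabilizer of $\CC_i$ in $\Wa'$ injects into $\Wa'/\Wa \simeq P(\Phi_\sigma^\vee)/Q(\Phi_\sigma^\vee)$, whose component indexed by $i$ has order $|J_i|+1$, so $\{\gamma_s \mid s \in J_i\} \cup \{\Id\}$ is the entire stabilizer, completing (4) and (5) simultaneously. The main obstacle is the combinatorial verification that $\gamma_s$ permutes the vertices of $\CC_i$; once it is carried out, the other claims follow from the Bourbaki theory.
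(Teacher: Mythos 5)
Your handling of (1), (2), (3) and the first half of (5) coincides with the paper's: both simply quote Bourbaki VI \S2 (Prop.~5 and its corollary for the Coxeter system, the alcove and its walls; Prop.~6 and its corollary for the representatives of $P(\Phi_\sigma^\vee)/Q(\Phi_\sigma^\vee)$), after observing that $(\Phi_\sigma,\Phi_\sigma^\vee)$ is a reduced root system and that everything factors over the components indexed by $I$. The divergence is in (4) and in the claim $\gamma_s(\CC_i)=\CC_i$: the paper disposes of (4) by citing Bonnaf\'e (3.5) and of the rest of (5) by citing Bourbaki VI \S2 no~3 Prop.~6 and its corollary, whereas you propose to re-derive these facts. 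That is legitimate, but the pivotal computation --- that $z_s\bigl(\tfrac1{n_t}\varpi_t^\vee\bigr)+\varpi_s^\vee$ is again a vertex of $\CC_i$ --- is announced and flagged as ``the main obstacle'' but never performed. Since this is exactly the content of the Bourbaki proposition you already invoke for the representatives, you should either carry it out or cite it, as the paper does.

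There is one genuine logical gap beyond that deferral. Your cardinality argument determines $\mathrm{Stab}_{\Wa'}(\CC)$, which settles (5); but item (4) asserts that $\prod_{i\in I}(\{z_s\mid s\in J_i\}\cup\{\Id\})$ is the group of \emph{all} automorphisms of $\tilde\Delta_\sigma$ induced by $W^\sigma$, and these are a priori different groups. Your argument yields only one inclusion (each $z_s$ does permute $\tilde\Delta_\sigma$); it does not show that an arbitrary $w\in W^\sigma$ with $w(\tilde\Delta_\sigma)=\tilde\Delta_\sigma$ is one of the $z_s$. The missing step is standard but must be said: such a $w$ preserves the unique (up to scalar) linear relation $\sum_{s\in S_{\sigma,i}}n_s\alpha_s+(-\alpha_{0,i})=0$ on $\tilde\Delta_{\sigma,i}$, hence sends $-\alpha_{0,i}$ to an element of mark one, i.e.\ to $-\alpha_{0,i}$ itself or to some $\alpha_s$ with $s\in J_i$; in the first case $w$ preserves $\Delta_{\sigma,i}$ and is therefore trivial on that component, and in the second case composing with the appropriate $z_s^{\pm1}$ reduces to the first case. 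With that paragraph added (or with the citation of Bonnaf\'e (3.5) that the paper uses for (4)), your proof is complete and otherwise follows the same route as the paper's.
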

A consequence of (5) is that if $\Gs$ is semisimple,
$\tW$ is generated by $\Wa$ and some of the $\gamma_s$.

Note also that $\gamma_s\mapsto z_s$ is a bijection, thus (4) and (5) establish
isomorphisms between the groups $\Wa'/\Wa$, $\Aut_{W^\sigma}(\tilde\Delta_\sigma)$,
$\Aut_{\Wa'}(\CC)$ and $P(\Phi_\sigma^\vee)/Q(\Phi_\sigma^\vee)$.
\begin{proof}Statement (1) comes from \cite[Ch.\ V, \S3, Th.1]{Bou} and
\cite[Ch.\ VI \S2, Prop.5]{Bou}.

The fact that $\CC$ is a fundamental
domain, and its decomposition as a product of simplices is stated in
\cite[Ch.\ VI, \S2, end of no 1]{Bou}. The formula for the vertices of the
fundamental alcove of
each irreducible component is given in \cite[Ch.\ VI, \S2 Cor.\ of Prop.5]{Bou}

Statement (3) is \cite[Ch.\ VI, \S2 Prop.5 (i)]{Bou}.

Statement (4) is \cite[(3.5)]{cedric}

Statement (5) is deduced from
\cite[Ch.\ VI, \S2 no 3, Prop.6 and its corollary]{Bou} which discusses the
case of an irreducible Coxeter group. 

One gets the consequence of (5) mentioned after the statement using that, 
as observed at the beginning of the proof of Proposition \ref{R(sigma)},
if $\Gs$ is semisimple then $R(\sigma)$ also, thus $\tW$ is a subgroup of $\Wa'$.
\end{proof}
Note that Proposition \ref{aff}(5) allows when $\Gs$ is semisimple to see the 
fundamental domain of $\tW$ as a subset of $\CC$.

With the convention of Context \ref{sigma symplectic}, we have:
\begin{corollary}\label{root system of Gtso2}
The $\bG$-conjugacy classes of \qss\ elements of finite order in
$\Gun$  are parameterized by the  points of a fundamental domain of
$\tW$ in $Y_\sigma\otimes\BQ_{p'}$. If the class of $t\sigma$ is parameterized
by $\lambda$, the root system $\Sigma_{t\sigma}$
of $\Gtso$ is 
$$\left\{\pi(\alpha), \alpha\in\Sigma\text{ such that $\CO(\alpha)$ is }
\left|
\begin{array}{l}
\text{ special and }2\bar\alpha(\lambda)\in 2\BZ+1\\
\text{ not special and }\bar\alpha(\lambda)\in\BZ\\
\end{array}\right.\right\}.$$
\end{corollary}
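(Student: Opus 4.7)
The plan is to combine Theorem \ref{parametrization} with the formula \eqref{CtsO} and the values of $C_{\sigma,\CO(\alpha)}$ from Proposition \ref{Csigma,O(alpha)}; no additional input is required, and the main care is in translating the multiplicative condition on $t$ into an additive condition on the parameter $\lambda$. For the first assertion, the classes correspond by Theorem \ref{parametrization} to $\tW$-orbits on $Y_\sigma\otimes\BQ_{p'}$, and these are the points of any fundamental domain; one such is built from Proposition \ref{aff}(2) by taking the fundamental alcove closure $\CC$ for $\Wa$ in $\BQ\Phi_\sigma^\vee$, cutting down further via the action of $\tW/\Wa\subset\Wa'/\Wa$ on $\CC$ given by Proposition \ref{aff}(5), and (if $R(\sigma)$ is not semisimple) appending a fundamental domain in the $W^\sigma$-invariant complement of $\BQ\Phi_\sigma^\vee$ inside $Y_\sigma\otimes\BQ_{p'}$.

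Next I would compute $\Sigma_{t\sigma}$. Definition \ref{Sigmatsigma} says $\pi(\alpha)\in\Sigma_{t\sigma}$ iff $C_{t\sigma,\CO(\alpha)}=1$, with special orbits always excluded when $p=2$. By \eqref{CtsO}, $C_{t\sigma,\CO(\alpha)}=C_{\sigma,\CO(\alpha)}\,\bar\alpha(t)$; Proposition \ref{Csigma,O(alpha)} gives $C_{\sigma,\CO(\alpha)}=1$ in the non-special case and $C_{\sigma,\CO(\alpha)}=-1$ in the special case (when $p\neq 2$). Thus the condition on $t$ becomes $\bar\alpha(t)=1$ if $\CO(\alpha)$ is not special, and $\bar\alpha(t)=-1$ if $\CO(\alpha)$ is special (necessarily $p\neq 2$).

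Finally, I would translate these into conditions on $\lambda$. Writing $\lambda=y\otimes x\in Y_\sigma\otimes\BQ_{p'}$, the class corresponds by Theorem \ref{parametrization} to $\bar t=y(\iota(x))\in\LT$, and any lift $t\in\Tso$ of $\bar t$ works in the previous step. Since $\bar\alpha$ is $\sigma$-invariant it lies in $X^\sigma\simeq X(\LT)$ (Lemma \ref{X(T/LT)}(1)), so $\bar\alpha(t)=\bar\alpha(\bar t)=\iota(x)^{\pairing{y}{\bar\alpha}}=\iota(\bar\alpha(\lambda))$ by bilinearity of the pairing. Since $\iota$ has kernel $\BZ$ and $\iota(\tfrac12)=-1$ when $p\neq 2$, the conditions $\bar\alpha(t)=1$ and $\bar\alpha(t)=-1$ are equivalent respectively to $\bar\alpha(\lambda)\in\BZ$ and $2\bar\alpha(\lambda)\in 2\BZ+1$. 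The latter is automatically vacuous in characteristic two (since $\tfrac12\notin\BQ_{2'}$), which is exactly compatible with the $p=2$ exclusion in Definition \ref{Sigmatsigma}. The only step requiring real care is propagating the sign from $C_{\sigma,\CO(\alpha)}=-1$ and recognizing that ``$2\bar\alpha(\lambda)\in 2\BZ+1$'' is simply the order-two analogue of the integrality condition $\bar\alpha(\lambda)\in\BZ$.
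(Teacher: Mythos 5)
Your proof is correct and follows essentially the same route as the paper: the first assertion is read off from Theorem \ref{parametrization}, and the second comes from applying Definition \ref{Sigmatsigma} to $t\sigma$, using Equation \eqref{CtsO} together with Proposition \ref{Csigma,O(alpha)} to reduce to $\bar\alpha(t)=\pm1$, and translating this into $\bar\alpha(\lambda)\in\BZ$ resp.\ $2\bar\alpha(\lambda)\in2\BZ+1$ via $\iota$. You simply spell out the computation with $\iota$ (and the $p=2$ consistency check) that the paper leaves implicit.
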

Note that the conditions above
corresponds to $\lambda$ lying on one of the affine hyperplanes of $\Wa$.
\begin{proof}
The first assertion is clear from Theorem \ref{parametrization}.
We get the second assertion by noticing that the
condition    which   defines   
$\Sigma_{t\sigma}$ in Definition \ref{Sigmatsigma}
can   be   written $\bar\alpha(\lambda)\in\BZ$  when
$C_{\sigma,\CO(\alpha)}=1$.
Similarly, when $C_{\sigma,\CO(\alpha)}=-1$ the condition can be written
$2\bar\alpha(\lambda)\in 2\BZ+1$.
\end{proof}

\begin{proposition}\label{Phi_tsigma}
$\Phi_{t\sigma}:=\{\alpha\in\Phi_\sigma\mid \alpha(t)=1\}$ is a closed subsystem
of $\Phi_\sigma$, and is a root system for the
\index{Phits@$\Phi_{t\sigma}$}
reflection subgroup $W^0(t\sigma)$ of $W^\sigma$. In other terms, if
$\lambda\in Y_\sigma\otimes\BQ$ parameterizes the class of $t\sigma$ as in 
Corollary \ref{root system of Gtso2},
we have $W^0(t\sigma)=\genby{s_\alpha\in W^\sigma, \alpha\in\Phi_\sigma\mid
s_\alpha(\lambda)-\lambda\in\BZ\Phi_\sigma^\vee}$.
\end{proposition}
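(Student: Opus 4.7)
The plan is to verify, in this order: (i) that $\Phi_{t\sigma}$ is a closed subsystem of $\Phi_\sigma$; (ii) that its Weyl group coincides with $W^0(t\sigma)$; and (iii) the reformulation of the generating condition in terms of $\lambda$.

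For (i), the defining condition $\alpha(t)=1$ is multiplicative: if $\alpha,\beta\in\Phi_{t\sigma}$ with $\alpha+\beta\in\Phi_\sigma$, then $(\alpha+\beta)(t)=\alpha(t)\beta(t)=1$, and similarly $(-\alpha)(t)=1$. So $\Phi_{t\sigma}$ is a closed subsystem, hence a root system in its own right, with Weyl group $W(\Phi_{t\sigma}):=\genby{s_\alpha\mid\alpha\in\Phi_{t\sigma}}$.

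For (ii), I would transport everything to the parameter $\lambda$: by Theorem \ref{parametrization}, for $\chi\in X^\sigma=X(\LT)$, the condition $\chi(t)=1$ is equivalent to $\chi(\lambda)\in\BZ$. I then compare the generators of $W^0(t\sigma)$ furnished by Corollary \ref{root system of Gtso2} with the reflections $s_\alpha$ for $\alpha\in\Phi_{t\sigma}$, in two cases according to the $\sigma$-orbit type of $\alpha\in\Sigma$. When $\CO(\alpha)$ is neither special nor cospecial, $\bar\alpha\in\Phi_\sigma$ and the relevant condition on both sides is $\bar\alpha(\lambda)\in\BZ$, yielding the same reflection $s_{\bar\alpha}=s_{\pi(\alpha)}$. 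The remaining case (only arising when $p\ne 2$, in components of type $A_{2r}$) pairs a special orbit $\CO(\alpha')$ with its cospecial orbit $\{\bar\alpha'\}$: in this direction only $2\bar\alpha'$ lies in $\Phi_\sigma$, while the reflection $s_{\bar\alpha'}$ may enter $W^0(t\sigma)$ either via $\pi(\alpha')\in\Sigma_{t\sigma}$ (condition $2\bar\alpha'(\lambda)\in 2\BZ+1$) or via $\pi(\bar\alpha')=\bar\alpha'\in\Sigma_{t\sigma}$ (the non-special case, condition $\bar\alpha'(\lambda)\in\BZ$, equivalently $2\bar\alpha'(\lambda)\in 2\BZ$). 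I expect the main obstacle to be the clean combination of these two conditions: their union is exactly $2\bar\alpha'(\lambda)\in\BZ$, which is precisely the condition for $2\bar\alpha'\in\Phi_{t\sigma}$.

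Finally, for (iii), I would compute $s_\alpha(\lambda)-\lambda=-\alpha(\lambda)\,\alpha^\vee$ and use that each coroot $\alpha^\vee\in\Phi_\sigma^\vee$ is primitive in $\BZ\Phi_\sigma^\vee$ (since $\Phi_\sigma^\vee$ is reduced). This makes $s_\alpha(\lambda)-\lambda\in\BZ\Phi_\sigma^\vee$ equivalent to $\alpha(\lambda)\in\BZ$, hence to $\alpha\in\Phi_{t\sigma}$. Combined with (ii), this yields the displayed formula for $W^0(t\sigma)$.
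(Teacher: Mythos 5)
Your proof is correct and follows essentially the same route as the paper's: the paper likewise reduces the identification of the Weyl group to matching $\Phi_{t\sigma}$ with $\Sigma_{t\sigma}$ root by root up to colinearity (with exactly your special/cospecial dichotomy in type $A_{2r}$, where the two branches $\bar\alpha'(t)=-1$ and $\bar\alpha'(t)=1$ together give $(2\bar\alpha')(t)=1$), and then concludes with the identical computation $s_\alpha(\lambda)-\lambda=-\alpha(\lambda)\alpha^\vee$. One minor imprecision: special orbits also occur when $p=2$ (and only $2\bar\alpha'$ lies in $\Phi_\sigma$ there too), so that case is not vacuous as your parenthetical suggests; but your matching still goes through, since for $p=2$ the special-orbit branch of $\Sigma_{t\sigma}$ is excluded by definition and $2\bar\alpha'(\lambda)\in\BZ$ is equivalent to $\bar\alpha'(\lambda)\in\BZ$ for $\lambda\in Y_\sigma\otimes\BQ_{p'}$.
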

In the above $\alpha(t)$ is the evaluation of $\alpha\in X^\sigma$
on $t$.
\begin{proof}
$\Phi_{t\sigma}$ is a closed subsystem of
$\Phi_\sigma$ since the condition which defines it is clearly closed. 
Let us show that the roots of $\Phi_{t\sigma}$ and $\Sigma_{t\sigma}$ are
colinear. If $\CO(\alpha)$ is not special or cospecial then $\bar\alpha$ is a
root of $\Phi_\sigma$ and by Equation \ref{CtsO} the condition $\bar\alpha(t)=1$
is equivalent to $\pi(\alpha)\in\Sigma_{t\sigma}$, thus
$\bar\alpha\in\Phi_{t\sigma}$ if and only if $\pi(\alpha)\in\Sigma_{t\sigma}$.
If $\CO(\alpha)$ is special
the corresponding root of $\Phi_\sigma$ is $2\bar\alpha$ so the condition
for $2\bar\alpha$ to be in $\Phi_{t\sigma}$ is $\bar\alpha(t)=\pm 1$.
Each of the signs determines a root in $\Sigma_{t\sigma}$
colinear to $\bar\alpha$:  with our choice of 
$\sigma$ (see Context \ref{sigma symplectic}) in case $A_{2r}$ this root is 
$\pi(\alpha)$ if 
$\bar\alpha(t)=-1$, and $\pi(\alpha+\sigma^i(\alpha))=2\pi(\alpha)$ otherwise, 
where $2i=|\CO(\alpha)|$.
Thus by \cite[remark after 8.5]{grnccomp}, the root systems $\Phi_{t\sigma}$
and $\Sigma_{t\sigma}$
have same Weyl group, which is $W^0(t\sigma)$ by Proposition
\ref{root system of Gtso}.
Finally $s_\alpha(\lambda)-\lambda=-\alpha(\lambda)\alpha^\vee$ thus
$\alpha(\lambda)\in\BZ$ is equivalent to 
$s_\alpha(\lambda)-\lambda\in\BZ\Phi_\sigma^\vee$.
\end{proof}
\begin{corollary}\label{centerR(sigma)}
The $\bG$-classes of quasi-central elements in $\Gun$ are in bijection with 
the center of $R(\sigma)$, where
we define the center of the root datum $R(\sigma)$ as the
set of orbits of $\tW/\Wa$ on 
$\prod_{i\in I}(\{0\}\cup\{\varpi_s^\vee\mid s\in J_i\})$.
\end{corollary}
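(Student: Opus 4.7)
The plan is to combine the characterization of quasi-centrality in terms of Weyl groups (Proposition \ref{Wsigma=Wosigma}) with the fundamental-domain description from Proposition \ref{aff}. By Theorem \ref{parametrization}, every \qss\ conjugacy class in $\Gun$ is parameterized by a point $\lambda\in Y_\sigma\otimes\BQ_{p'}$ modulo $\tW$; equivalently, by Proposition \ref{aff}(2) and (5), by a point of $\CC$ modulo the $\tW/\Wa$-action on the fundamental alcove. My goal is to show that the quasi-central classes correspond exactly to the points $\lambda\in\CC$ that lie in the coweight lattice $P(\Phi_\sigma^\vee)$, and that $\CC\cap P(\Phi_\sigma^\vee)$ is precisely the set $\prod_{i\in I}(\{0\}\cup\{\varpi_s^\vee\mid s\in J_i\})$ appearing in the statement.

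The first step is to translate quasi-centrality into a condition on $\lambda$. By Proposition \ref{Wsigma=Wosigma}, $t\sigma$ is quasi-central iff $W^0(t\sigma)=W(t\sigma)=W^\sigma$. Since $W^0(t\sigma)\subseteq W(t\sigma)\subseteq W^\sigma$ always (representatives of $W^0(t\sigma)$ in $N_{\Gtso}(\Tso)$ are automatically $\sigma$-fixed with trivial commutator), this collapses to $W^0(t\sigma)=W^\sigma$. By Proposition \ref{Phi_tsigma}, $W^0(t\sigma)$ is the Weyl group of the closed subsystem $\Phi_{t\sigma}=\{\alpha\in\Phi_\sigma\mid \alpha(\lambda)\in\BZ\}\subseteq\Phi_\sigma$; since a closed subsystem of a reduced root system carrying the whole ambient Weyl group must equal the ambient system, $W^0(t\sigma)=W^\sigma$ is equivalent to $\Phi_{t\sigma}=\Phi_\sigma$, i.e.\ to $\alpha(\lambda)\in\BZ$ for every $\alpha\in\Phi_\sigma$, i.e.\ to $\lambda\in P(\Phi_\sigma^\vee)$.

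The second step identifies $\CC\cap P(\Phi_\sigma^\vee)$ explicitly. Using the product decomposition $\CC=\prod_i\CC_i$ and the inequalities defining each $\CC_i$ (namely $\alpha_s(\lambda)\geq 0$ for $s\in S_{\sigma,i}$ and $\sum_s n_s\alpha_s(\lambda)=\alpha_{0,i}(\lambda)\leq 1$), a point in $\CC_i\cap P(\Phi_\sigma^\vee)$ satisfies $\alpha_s(\lambda_i)\in\BZ_{\geq 0}$ subject to $\sum_s n_s\alpha_s(\lambda_i)\leq 1$; the only solutions are $\lambda_i=0$ and $\lambda_i=\varpi_s^\vee$ with $n_s=1$, i.e.\ $s\in J_i$. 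Combining the three ingredients, the quasi-central classes biject with the orbits of $\tW/\Wa$ on $\prod_i(\{0\}\cup\{\varpi_s^\vee\mid s\in J_i\})$, which is by definition the center of $R(\sigma)$.

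The main technical point to verify will be the $p'$ aspect of Theorem \ref{parametrization}: each vertex $\varpi_s^\vee$ must represent a finite-order element of $\LT$ of order prime to $p$. This is not a serious obstacle, because $R(\sigma)$ is semisimple (Proposition \ref{R(sigma)}), so $P(\Phi_\sigma^\vee)/Y_\sigma$ is finite and each $\varpi_s^\vee$ has finite order in $\LT$; and since tori over $k$ have no nontrivial $p$-torsion, this order is automatically coprime to $p$.
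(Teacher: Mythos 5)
Your main line of argument is the same as the paper's: reduce quasi-centrality to $W^0(t\sigma)=W^\sigma$ via Proposition \ref{Wsigma=Wosigma}, translate this through Proposition \ref{Phi_tsigma} into the condition $\alpha(\lambda)\in\BZ$ for all $\alpha\in\Phi_\sigma$, and then identify the points of $\CC$ satisfying it as $\prod_{i\in I}(\{0\}\cup\{\varpi_s^\vee\mid s\in J_i\})$. Your two local variations are harmless: where the paper deduces $\alpha(\lambda)\in\BZ$ for all $\alpha$ directly from $W(t\sigma)\supseteq W^0(t\sigma)=W^\sigma$, you pass through the claim that a closed subsystem with full Weyl group is the whole system (true, e.g.\ by counting reflections, since the reflections of $W(\Phi_{t\sigma})$ are exactly the $s_\alpha$ with $\alpha\in\Phi_{t\sigma}$); and where the paper uses barycentric coordinates $\lambda=\sum_s\frac{\lambda_s}{n_s}\varpi_s^\vee$, you use the defining inequalities $\alpha_s(\lambda)\ge 0$, $\alpha_{0,i}(\lambda)\le 1$ of the alcove. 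These are equivalent computations.

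The one genuine problem is your final paragraph. You correctly identify that the bijection requires each vertex $\varpi_s^\vee$ ($s\in J_i$) to lie in $Y_\sigma\otimes\BQ_{p'}$, but your resolution gets the logic backwards. The order of $\varpi_s^\vee$ modulo $Y_\sigma$ is a purely lattice-theoretic quantity (a divisor of $|P(\Phi_\sigma^\vee)/Y_\sigma|$) and can perfectly well be divisible by $p$; the absence of $p$-torsion in a torus over $k$ does not force this order to be prime to $p$ --- it means instead that such a vertex corresponds to \emph{no} element of $\LT$ at all, since the isomorphism $Y_\sigma\otimes(\BQ/\BZ)_{p'}\simeq\LT_{\mathrm{tors}}$ of Theorem \ref{parametrization} only reaches $p'$-torsion classes. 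Concretely, for $R(\sigma)$ of type $B_r$ simply connected with $p=2$ (e.g.\ $\bG=\SL_{2r}$, $n_\sigma=2$ in characteristic $2$), the unique nonzero vertex $\varpi_1^\vee$ has order $2$ modulo $Y_\sigma$ and yields no quasi-central class, consistent with the paper's later assertion that $\sigma$ is then the only quasi-isolated class. So the correct statement requires restricting to vertices in $(\tilde S_\sigma)_{p'}$, exactly as in Proposition \ref{param isolated}. In fairness, the paper's own proof is silent on this point and the corollary as printed carries the same implicit $p'$ restriction; but your attempted justification, as written, is not a proof of it and would certify a false bijection in positive characteristic.
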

We note in particular that though for a quasi-simple group not of type
$A_{2n}$ there is a single
class of quasi-central automorphisms by \cite[1.22]{grnc}, there is
more than one class of quasi-central elements whenever the datum 
$R(\sigma)$ is not adjoint,
which,  as described in Proposition \ref{R(sigma)}, often happens.
\begin{proof}
The   class   of   $t\sigma\in\Gun$   is   quasi-central  if  and  only  if
$W^0(t\sigma)=W^\sigma$,  hence,  by  Proposition  \ref{Phi_tsigma}, if and
only    if   $s_\alpha(\lambda)-\lambda\in\BZ\Phi_\sigma^\vee$    for   all
$\alpha\in\Phi_\sigma$,  where  $\lambda\in\CC$  parameterizes  the class of
$t\sigma$.   This   is   equivalent   to  $\alpha(\lambda)\in\BZ$, which is
equivalent to the same condition for  all $\alpha\in\Delta_\sigma$.
Since  $\lambda$ is in $\CC$ we
have $\lambda=\sum_{i\in I}\sum_{s\in S_{\sigma,i}}
\frac{\lambda_s}{n_s}\varpi_s^\vee$  with $\lambda_s\geq 0$ and $\sum_{s\in
S_{\sigma,i}}\lambda_s=1$  for  any  $i$.  If  $\alpha\in\Delta_\sigma$
corresponds to $s\in S_\sigma$ one has
$\alpha(\lambda)=\frac{\lambda_s}{n_s}\in[0,\frac1{n_s}]$,     hence
$\alpha(\lambda)\in\BZ$   if  and  only  if $\lambda_s=0$ or
$\lambda_s=n_s=1$. From the condition $\sum_{s\in S_{\sigma,i}}\lambda_s=1$
we  deduce that for each $i$ there is at most one non-zero $\lambda_s$ with
$s\in    S_{\sigma,i}$,   which   means    that   $\lambda\in   \prod_{i\in
I}(\{0\}\cup\{\varpi_s^\vee\mid s\in J_i\})$.
By Corollary \ref{root system of Gtso2}, the quasi-central classes
are parameterized by the $\tW$-orbits of such $\lambda$.
\end{proof}
We call {\em minuscule}  the weights of the form
$\sum_{i\in K}\varpi_{s_i}$  where $K\subset I$ is non-empty
and where, for each $i\in K$ the fundamental weight 
$\varpi_{s_i}$ corresponds to $s_i\in J_i$ (this
extends the usual definition of minuscule weights in the irreducible case).
\index{minuscule}
\begin{lemma}\label{minuscule}
Let $Q$ be the root lattice of $\Sigma$ in $X\otimes\BQ$.
\index{Q@$Q$}
Then the non-zero elements of $P^\sigma/Q^\sigma$ identify with certain
$\sigma$-stable minuscule weights.
If $\bG$ is semisimple we have $Q\subset X\subset P$, thus
if $\sigma$ fixes no minuscule weight, then $Q^\sigma=X^\sigma=P^\sigma$.
\end{lemma}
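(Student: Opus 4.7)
The approach is to study the natural map $P^\sigma\to P/Q$: its kernel is $P^\sigma\cap Q=Q^\sigma$ and its image lies in $(P/Q)^\sigma$. Since $\sigma$ permutes the simple roots $\Pi$, the root lattice $Q=\BZ\Pi$ is a permutation $\genby\sigma$-module, hence $H^1(\sigma,Q)=0$, so the induced map $P^\sigma/Q^\sigma\to(P/Q)^\sigma$ is actually an isomorphism.

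For the first assertion, I would decompose $\Sigma=\bigsqcup_i\Sigma_i$ into irreducible components, so that $P/Q=\bigoplus_i P_i/Q_i$, and invoke the classical fact that for each irreducible $\Sigma_i$ the non-zero elements of $P_i/Q_i$ are represented bijectively by the minuscule fundamental weights of $\Sigma_i$. A $\sigma$-invariant class in $P/Q$ is then assembled from $\sigma$-fixed minuscule classes on the $\sigma$-stable components together with orbit sums of minuscule weights on the $\sigma$-permuted components; lifting gives a $\sigma$-stable minuscule weight in the extended sense of the definition preceding the lemma, which realizes the desired identification of non-zero elements of $P^\sigma/Q^\sigma$ with certain $\sigma$-stable minuscule weights.

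For the second assertion, the semisimple hypothesis yields the sandwich $Q\subset X\subset P$, which upon taking $\sigma$-invariants becomes $Q^\sigma\subset X^\sigma\subset P^\sigma$. The hypothesis that $\sigma$ fixes no minuscule weight means, via the first assertion, that $P^\sigma/Q^\sigma$ has no non-zero element; hence $P^\sigma=Q^\sigma$, and the sandwich collapses to the asserted chain $Q^\sigma=X^\sigma=P^\sigma$.

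The main point to verify carefully is the surjectivity of $P^\sigma\to(P/Q)^\sigma$ together with the explicit matching of $\sigma$-invariant classes with $\sigma$-stable sums of minuscule weights; this is where the interpretation of ``$\sigma$-stable'' as including orbit sums over $\sigma$-permuted components (rather than just individually $\sigma$-fixed minuscule weights) becomes essential, and once that is in hand the second assertion is an immediate consequence of the first.
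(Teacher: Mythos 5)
Your proof is correct and essentially matches the paper's: both decompose $\Sigma$ into irreducible components, use that the minuscule weights (in the extended sense) form a $\sigma$-stable set of representatives of the non-zero classes of $P/Q$, and then take $\sigma$-invariants of $0\to Q\to P\to P/Q\to 0$. Your one extra step --- surjectivity of $P^\sigma\to(P/Q)^\sigma$ from $H^1(\genby\sigma,Q)=0$ because $Q=\BZ\Pi$ is a permutation module --- is valid (it is in effect Lemma \ref{A(Ts)}(2) applied to the lattice $Q$), but it is not needed: the lemma only asserts an identification with \emph{certain} $\sigma$-stable minuscule weights, and the second assertion uses only the injection $P^\sigma/Q^\sigma\hookrightarrow(P/Q)^\sigma$.
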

\begin{proof}
When  $\Sigma$ is irreducible,  a $\sigma$-stable set  of representatives of 
the
non-zero elements of $P/Q$ is given by the minuscule weights (see \cite[VI,
\S2,  exercice 5]{Bou}); thus  in general such  a set is  given by the 
minuscule weights with our extended definition.
The  first
assertion  is  thus  a  consequence  of  the  exact  sequence $0\rightarrow
Q^\sigma\rightarrow  P^\sigma\rightarrow(P/Q)^\sigma$. The second assertion
follows.
\end{proof}

\begin{example}
Consider the case of $\bG$ semisimple of type $D_4$ with the triality
automorphism $\sigma$. 
There are two subcases, $\bG$ simply connected or adjoint ---
the intermediate cases do not admit a triality, see Proposition \ref{type of Gso}(3).
\begin{itemize}
\item When $\bG$ is adjoint, $X=Q$ has basis the simple roots
$\alpha_1,\alpha_2,\alpha_3,\alpha_4$
and $\sigma$ is given by the cycle
$\alpha_1\mapsto\alpha_3\mapsto\alpha_4$.
\item When $\bG$ is simply connected, $X=P$ has basis the weights
$\varpi_1,\varpi_2,\varpi_3,\varpi_4$
and $\sigma$ is given by the cycle
$\varpi_1\mapsto\varpi_3\mapsto\varpi_4$.
\end{itemize}
The minuscule weights are $\varpi_1,\varpi_3,\varpi_4$ and none is fixed by
$\sigma$. By the second assertion of Lemma \ref{minuscule}, it follows that
if we set $\alpha:=\alpha_1+\alpha_3+\alpha_4$ and $\beta:=\alpha_2$
(the simple roots of the root system
$G_2=(\Phi_\sigma,\Phi_\sigma^\vee)$) in both cases $X^\sigma=P^\sigma=Q^\sigma$
has basis $\alpha,\beta$. The
highest root of the root system $G_2$ is $2\alpha+3\beta$, in both cases
$Y_\sigma$ has basis the simple coroots of $G_2$ given by
$\alpha^\vee=(\alpha_1^\vee+\alpha_3^\vee+\alpha_4^\vee)/3$ and
$\beta^\vee=\alpha^\vee_2$ and the fundamental coweights are
$\varpi_\alpha^\vee=2\alpha^\vee+\beta^\vee$ and
$\varpi_\beta^\vee=3\alpha^\vee+2\beta^\vee$. We have $\tW=\Wa$ and
the extremal
points of the fundamental alcove are given by 0, $\alpha^\vee+\frac12\beta^\vee$
and $\alpha^\vee+\frac23\beta^\vee$. By Corollary \ref{root system of Gtso2},
the corresponding root systems
of $\Gtso$ are respectively of type $G_2$, $A_1\times A_1$ and $A_2$
--- see also the table at the end of the paper.
\end{example}
The following remark will be useful
\begin{proposition}\label{cas facile}
When $\bG$ is quasi-simple, $n_\sigma>1$ and there are no special orbits, 
the root systems $\Sigma_{t\sigma}$
and $\Phi_{t\sigma}$ are dual to each other.
\end{proposition}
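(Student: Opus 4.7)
The plan is to exhibit an explicit root-system isomorphism
$\iota:(\Sigma_{t\sigma},\Sigma_{t\sigma}^\vee)\xrightarrow\sim
(\Phi_{t\sigma}^\vee,\Phi_{t\sigma})$ that interchanges the roles of roots and coroots; this is exactly the content of duality of root systems.

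My first step is to observe that the hypotheses force $\Sigma$ to be simply laced. Indeed, $\bG$ quasi-simple with $n_\sigma>1$ admitting a non-trivial diagram automorphism and with no special orbits rules out type $A_{2r}$, leaving only $A_{2r-1}$, $D_n$ and $E_6$. The key consequence is the identity $\pairing{\alpha^\vee}{\delta}=\pairing{\delta^\vee}{\alpha}$ for all $\alpha,\delta\in\Sigma$, which will drive the Cartan-integer comparison.

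Next I will unravel the two subsystems. With no special orbits, Definition \ref{Sigmatsigma} together with \eqref{CtsO} gives $\Sigma_{t\sigma}=\{\pi(\alpha)\mid\bar\alpha(t)=1\}$ with coroots $\bar\alpha^\vee$, while Proposition \ref{Phi_sigma} combined with Proposition \ref{Phi_tsigma} gives $\Phi_{t\sigma}=\{\bar\alpha\mid\bar\alpha(t)=1\}$ with coroots $\pi(\alpha^\vee)$. Hence both systems are indexed by the same set of $\sigma$-orbits $\CO(\alpha)$ satisfying $\bar\alpha(t)=1$, and I define $\iota$ by $\pi(\alpha)\mapsto\pi(\alpha^\vee)$ and $\bar\alpha^\vee\mapsto\bar\alpha$.

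Finally I will verify the Cartan-integer identity
$\pairing{\bar\alpha^\vee}{\pi(\beta)}=\pairing{\pi(\beta^\vee)}{\bar\alpha}$. Using $\sigma$-invariance of $\pi(\beta)$ and of the pairing, each term $\pairing{\gamma^\vee}{\pi(\beta)}$ with $\gamma\in\CO(\alpha)$ equals $\pairing{\alpha^\vee}{\pi(\beta)}$, so the left-hand side reduces to $|\CO(\alpha)||\CO(\beta)|^{-1}\sum_{\delta\in\CO(\beta)}\pairing{\alpha^\vee}{\delta}$; the symmetric argument reduces the right-hand side to $|\CO(\alpha)||\CO(\beta)|^{-1}\sum_{\delta\in\CO(\beta)}\pairing{\delta^\vee}{\alpha}$. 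These agree by the simply-laced identity applied summand by summand. The real content of the proof lies in the simply-laced observation; the remaining steps amount to routine orbit-sum bookkeeping with the $\sigma$-invariant pairing, so I do not anticipate a genuine obstacle.
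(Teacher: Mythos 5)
Your proof is correct and follows essentially the same route as the paper: the paper's proof also rests on the observation that $n_\sigma>1$ forces $\bG$ to be of type $ADE$, so that $\Sigma\simeq\Sigma^\vee$, under which isomorphism the definitions of $\Phi_{t\sigma}$ and $\Sigma_{t\sigma}^\vee$ (and of $\Phi_{t\sigma}^\vee$ and $\Sigma_{t\sigma}$) match. Your explicit verification of the Cartan integers $\pairing{\bar\alpha^\vee}{\pi(\beta)}=\pairing{\pi(\beta^\vee)}{\bar\alpha}$ via orbit sums simply spells out what the paper declares ``immediate from the definitions.''
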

\begin{proof}
This is an immediate consequence of the definitions: when $n_\sigma>1$
then $\bG$ is of type $ADE$ thus $\Sigma\simeq\Sigma^\vee$,
and via this isomorphism the definitions of $\Phi_{t\sigma}$ and 
$\Sigma_{t\sigma}^\vee$ match, as well as those of $\Phi_{t\sigma}^\vee$
and $\Sigma_{t\sigma}$.
\end{proof}
\section{Isolated and quasi-isolated elements}
\begin{definition}
We say that the \qss\ element $t\sigma$ is {\em isolated}
\index{isolated}
if $\Gtso$ is not contained in a $\sigma$-stable Levi subgroup of 
a proper $\sigma$-stable parabolic subgroup of $\bG$.
\end{definition}
\begin{proposition}\label{isolated}The following are equivalent:
\begin{enumerate}
\item $t\sigma$ is isolated.
\item Every central torus in $\Gtso$ is central in $\bG$.
\item $Z^0(\Gtso)=Z^0(\Gso)$.
\item $\Sigma_{t\sigma}$ and $\Sigma_\sigma$ span the same subspace of
$X(\Tso)\otimes \BQ$.
\item[(4')] $\Phi_{t\sigma}$ and  $\Phi_\sigma$ span the same subspace of
$X(\Tso)\otimes \BQ$.
\item $W^0(t\sigma)$ is not in a proper parabolic subgroup of $W^\sigma$.
\end{enumerate}
\end{proposition}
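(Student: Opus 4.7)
The plan is to route everything through the invariant $Z^0(\Gtso)$ and to handle the conditions (4), (4'), (5) via root systems and reflection groups. I would proceed in the order (3) $\iff$ (4) $\iff$ (4'), then (2) $\iff$ (3), then (1) $\iff$ (2), and finally (4) $\iff$ (5).

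For (3) $\iff$ (4): by Proposition \ref{root system of Gtso}(1) and Corollary \ref{root system of Gtso2}, both $\Gso$ and $\Gtso$ are connected reductive with common maximal torus $\Tso$ and root systems $\Sigma_\sigma \supset \Sigma_{t\sigma}$ in $X(\Tso)\otimes\BQ$. Hence the cocharacter lattices of their connected centers are $Y(Z^0(\Gso)) = (\BZ\Sigma_\sigma)^\perp$ and $Y(Z^0(\Gtso)) = (\BZ\Sigma_{t\sigma})^\perp$ inside $Y^\sigma$, and these coincide iff the two root systems have equal $\BQ$-span. For (4) $\iff$ (4'), one uses that $X_\sigma\otimes\BQ = X^\sigma\otimes\BQ = \pi(X\otimes\BQ)$; comparing Definition \ref{Sigmatsigma}, Propositions \ref{Csigma,O(alpha)} and \ref{Phi_sigma}, Proposition \ref{Phi_tsigma} and Corollary \ref{root system of Gtso2}, the roots of $\Phi_\sigma$, $\Phi_{t\sigma}$ are positive scalar multiples of the corresponding $\pi(\alpha)$ in $\Sigma_\sigma$, $\Sigma_{t\sigma}$, so the $\BQ$-spans match pair by pair.

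For (2) $\iff$ (3): since $Z^0(\Gtso)$ is the maximal central torus of $\Gtso$, (2) amounts to $Z^0(\Gtso) \subset Z(\bG)$. The inclusion $\Sigma_{t\sigma}\subset\Sigma_\sigma$ yields automatically $Z^0(\Gso) \subset Z^0(\Gtso)$, while Proposition \ref{Gs ss} identifies $Z^0(\Gso) = \Zso\subset Z(\bG)$. Hence (3) gives (2) immediately; conversely, under (2) the torus $Z^0(\Gtso)$ sits in $\Tso\cap Z(\bG)$, is connected and pointwise $\sigma$-fixed (as it lies in $\Tso$), so lies in $(Z(\bG)^\sigma)^0 = \Zso = Z^0(\Gso)$, forcing equality and (3).

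For (1) $\iff$ (2), I would invoke the standard fact that the minimal Levi subgroup of $\bG$ containing a connected reductive subgroup $\bH$ is $C_\bG(Z^0(\bH))$, combined with the fact that any $\sigma$-stable Levi is a Levi of a $\sigma$-stable parabolic (built, e.g., from a $\sigma$-fixed cocharacter of $Z^0(\bH)$). Applied to $\bH = \Gtso$, which is $\sigma$-stable because $t$ and $\sigma$ commute, this rephrases (1) as ``$C_\bG(Z^0(\Gtso)) = \bG$'', i.e.\ as (2). Finally, (4) $\iff$ (5) is the general reflection-group statement that in a finite Coxeter system --- here $(W^\sigma, S_\sigma)$ of Proposition \ref{root system of Gtso}(2) --- the Weyl group of a root subsystem $\Sigma_{t\sigma}\subset\Sigma_\sigma$ is contained in a proper parabolic subgroup iff its roots span a proper subspace of the ambient reflection representation. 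The main obstacle I anticipate is in (1) $\iff$ (2): one has to check carefully that the simultaneous $\sigma$-stability requirement on both the Levi and the parabolic in the definition of isolated imposes no restriction beyond the existence of a proper $\sigma$-stable Levi containing $\Gtso$.
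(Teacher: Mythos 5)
Your overall architecture matches the paper's (orthogonal complements of root spans for (3)$\Leftrightarrow$(4), the colinearity of $\bar\alpha$ and $\pi(\alpha)$ for (4)$\Leftrightarrow$(4'), reflection-group generalities for (4)$\Leftrightarrow$(5)), but two of your steps rest on claims that are either false as stated or not actually standard in this setting.

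First, in (2)$\Leftrightarrow$(3) you derive $Z^0(\Gso)\subset Z^0(\Gtso)$ from ``the inclusion $\Sigma_{t\sigma}\subset\Sigma_\sigma$''. That inclusion fails in general: the paper exhibits $t\in\GL_5$ with $\sigma$ of symplectic type for which $\Sigma_{t\sigma}$ is not a subset of $\Sigma_\sigma$ (special orbits in type $A_{2r}$). The inclusion of connected centers is nevertheless true, but the correct route is the one the paper takes: $Z^0(\Gso)=\Zso$ by Proposition \ref{Gs ss}, and since $Z(\bG)\subset\bT$ one has $((Z\bG)^\sigma)^0=((Z\bG)^{t\sigma})^0\subset Z^0(\Gtso)$. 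This is easily repaired, but as written the step is unsupported.

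Second, and more seriously, your (1)$\Leftrightarrow$(2) hinges on the ``standard fact'' that the minimal Levi containing a connected reductive subgroup $\bH$ is $C_\bG(Z^0(\bH))$. That fact is standard only when $\bH$ contains a maximal torus of $\bG$ (one then argues $Z^0(\bL)\subset\bT\subset\bH$ for any Levi $\bL\supset\bH$, forcing $Z^0(\bL)\subset Z^0(\bH)$). Here $\Gtso$ contains only $\Tso$, which is a maximal torus of $\Gso$ but not of $\bG$, so the minimality argument breaks: for a $\sigma$-stable Levi $\bL\supset\Gtso$ there is no a priori reason that $Z^0(\bL)$, or any torus witnessing the properness of $\bL$, lies inside $\Gtso$. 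The paper fills exactly this hole with the structure theory of $\sigma$-stable Levi subgroups from \cite{grnc}: by [1.25] such an $\bL$ satisfies that $\Lso$ is a proper Levi of $\Gso$ with $Z^0(\Lso)\subset\Tso\subset\Gtso$ central in $\bL$ but (by [1.23]) not central in $\bG$; and conversely, for a torus $\bS\subset\Gso$ central in $\Gtso$ but not in $\bG$, [1.25(ii)] guarantees that $C_\bG(\bS)$ is a $\sigma$-stable Levi of a proper $\sigma$-stable parabolic. Your anticipated obstacle (arranging $\sigma$-stability of the parabolic) is the easy half; the genuine gap is the minimality/properness direction, which requires these disconnected-group results rather than the connected-group folklore you invoke.
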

\begin{proof}
Let us prove (1)$\Leftrightarrow$(2). If (1) fails then $\Gtso$ is
contained   in  a   proper  $\sigma$-stable   Levi  subgroup   $\bL$  of  a
$\sigma$-stable   parabolic   subgroup   of   $\bG$.   In   particular,  by
\cite[1.25]{grnc}   $\Lso$  is  a  proper   Levi  subgroup  of  $\Gso$  and
$Z^0(\Lso)$  is  central  in $\bL$ but not in $\bG$ by
\cite[1.23]{grnc}.  By \cite[1.8(iii)]{grnc}  $\Tso=(\bT^{t\sigma})^0$ is a
maximal  torus  of  $\Gtso$,  thus  it  is  a maximal torus of $\Lso$, and
$Z^0(\Lso)$ is in this torus thus in $\Gtso$; and $Z^0(\Lso)$ is central in
$\Gtso$ since it is central in $\bL$. Thus (2) fails.

If (2) fails, then there is a torus $\bS$ central in $\Gtso$ not central in
$\bG$;  the torus $\bS$ is in all maximal tori of $\Gtso$, in particular in
$\Tso$ thus $\bS\subset\Gso$; thus by \cite[1.25(ii)]{grnc},
$C_\bG(\bS)$ is a Levi subgroup of
a $\sigma$-stable proper parabolic subgroup of $\bG$ containing $\Gtso$.

Let us prove (2)$\Leftrightarrow$(3). We first observe that one has always
$Z^0(\Gtso)\supset Z^0(\Gso)$ since by Proposition \ref{Gs ss} we have
$Z^0(\Gso)=((Z\bG)^{\sigma})^0$, and since $Z\bG\subset\bT$, we have
$((Z\bG)^{\sigma})^0)=((Z\bG)^{t\sigma})^0$.
Now if (2) holds every element of $Z^0(\Gtso)$ is central in $\bG$ and
contained in $\Tso$, thus
central in $\Gso$, whence equality. Conversely, if there is equality any
torus central in $\Gtso$ is central in $\Gso$, thus in $\bG$ since
$Z^0(\Gso)\subset Z^0(\bG)$ by \cite[1.23]{grnc} applied with $\bL=\bG$.

Let us prove (3) $\Leftrightarrow$ (4).  It is a general fact in a reductive
group $\bG$ that if $\Sigma^\perp$ is the orthogonal of $\Sigma$ in 
$Y\otimes\BQ$ then $Y(Z^0\bG)=Y\cap \Sigma^\perp$ and generates $\Sigma^\perp$.
Applying this to $\Gtso$, (3) can be rewritten
$\Sigma_{t\sigma}^\perp=\Sigma_\sigma^\perp$ where $\perp$ is taken in
$Y(\Tso)\otimes\BQ$.
We get (4) by taking the orthogonal of this equality.

(4) is equivalent to (4') since $\Phi_{t\sigma}$ and $\Sigma_{t\sigma}$ 
span the same $\BQ$-vector space,
as do $\Phi_{\sigma}$ and $\Sigma_{\sigma}$.

We finally prove (4) $\Leftrightarrow$ (5).
We have $W^\sigma=W^0(\sigma)$ by Proposition \ref{Wsigma=Wosigma}. The equivalence
of (4) and (5) thus follows from the fact that
$W^0(t\sigma)$ and $W^0(\sigma)$ are the Weyl groups of the root systems
$\Sigma_{t\sigma}$ and $\Sigma_\sigma$ respectively.
\end{proof}
\begin{corollary}
If $\bG$ is semisimple and $t\sigma$ isolated, then $\Gts$ is semisimple.
\end{corollary}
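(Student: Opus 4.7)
The plan is to reduce the semisimplicity of $\Gts$ to the triviality of $Z^0(\Gtso)$, and then chain together the characterizations already established. Recall that for a (possibly disconnected) reductive group, semisimplicity amounts to the identity component being semisimple, i.e.\ to the radical $Z^0(\Gtso)$ being trivial.

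First I would invoke Proposition \ref{isolated}, specifically the equivalence (1)$\Leftrightarrow$(3): since $t\sigma$ is isolated we get the identification
\[
Z^0(\Gtso) = Z^0(\Gso).
\]
Next, by Proposition \ref{Gs ss} applied to the \qss\ element $\sigma$, we have $Z^0(\Gso) = \Zso = ((Z(\bG))^\sigma)^0$. Putting these together,
\[
Z^0(\Gtso) = ((Z(\bG))^\sigma)^0.
\]

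Finally I would use the hypothesis that $\bG$ is semisimple: this forces $Z(\bG)$ to be finite, hence $(Z(\bG))^\sigma$ is finite, so its identity component is trivial. Therefore $Z^0(\Gtso) = \{1\}$, which means $\Gtso$ is semisimple, and hence so is $\Gts$.

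There is no real obstacle here; all the substantive work has already been done in Propositions \ref{isolated} and \ref{Gs ss}. The only point that deserves explicit mention is the passage from ``$\bG$ semisimple'' to ``$Z(\bG)$ finite'', which is standard, and the convention that semisimplicity of the possibly disconnected group $\Gts$ is detected on its identity component.
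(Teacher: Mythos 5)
Your proof is correct and follows essentially the same route as the paper, which simply cites Proposition \ref{isolated} (the paper invokes item (2) --- no nontrivial central torus of $\Gtso$ can be central in the semisimple group $\bG$ --- while you pass through item (3) and Proposition \ref{Gs ss}; the two are equivalent by that same proposition and both reduce to the finiteness of $Z(\bG)$).
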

\begin{proof}
This is a straightforward consequence of Proposition \ref{isolated}(2).
\end{proof}
\begin{remark} Proposition \ref{isolated}(3) fails if one does not take the
connected components, that is
we do not have in general $Z(\bG^\sigma)\subset Z(\Gts)$,
as the following example shows:
take $\bG=\GL_3$ with $\sigma$ of symplectic type and $t$ such that 
$t\sigma$ is of orthogonal type; then $\Gs\simeq \Sp_2\times\{\pm1\}$ and $\Gts\simeq\Orth_3$;
we have $Z(\Gs)\simeq(\BZ/2\BZ)^2$ and $Z(\Gts)\simeq\BZ/2\BZ$.
\end{remark}
To study isolated classes we can assume $\bG$ to be semisimple, thanks to the
following proposition.
\begin{proposition}
We have $\Tso=((Z\bG)^\sigma)^0((\bT\cap D\bG)^\sigma)^0$ and if we write
$t\in\Tso$ as $t=zt_1$ with $z\in((Z\bG)^\sigma)^0$
and $t_1\in((\bT\cap D\bG)^\sigma)^0$ then  $\bG^{t\sigma}=\bG^{t_1\sigma}$ and
$t\sigma$ is isolated in $\bG\cdot\sigma$ if and only if 
$t_1\sigma$ is isolated in $D\bG\cdot\sigma$.
\end{proposition}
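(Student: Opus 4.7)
My plan is to prove the three parts in order.

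For the decomposition of $\Tso$, I start from the $\sigma$-equivariant multiplication isogeny
\[
\mu\colon Z(\bG)^0\times(\bT\cap D\bG)\longrightarrow\bT,
\]
whose kernel $\{(z,z\inv)\mid z\in Z(\bG)^0\cap D\bG\}$ is finite. The image of the connected subgroup $((Z(\bG))^\sigma)^0\times((\bT\cap D\bG)^\sigma)^0$ lies in $\Tso$ and has dimension $\dim((Z(\bG))^\sigma)^0+\dim((\bT\cap D\bG)^\sigma)^0$, since the intersection of the two factors is finite. On the other hand, the $\sigma$-equivariant rational decomposition $Y\otimes\BQ=Y(Z(\bG)^0)\otimes\BQ\oplus Y(\bT\cap D\bG)\otimes\BQ$ (which follows from $\mu$ being an isogeny) gives, on taking $\sigma$-invariants,
\[
\dim\Tso=\dim Y^\sigma=\dim Y(Z(\bG)^0)^\sigma+\dim Y(\bT\cap D\bG)^\sigma.
\]
The two dimensions therefore match, and the equality follows since a connected subtorus of $\Tso$ of full dimension must equal $\Tso$.

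For the second assertion, since $z\in Z(\bG)$ conjugation by $z$ acts trivially on $\bG$, so conjugation by $t\sigma=zt_1\sigma$ coincides with conjugation by $t_1\sigma$ on $\bG$; their fixed subgroups are therefore equal, giving $\bG^{t\sigma}=\bG^{t_1\sigma}$.

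For the equivalence of isolatedness, I would invoke criterion~(5) of Proposition~\ref{isolated}: $t\sigma$ is isolated in $\bG\cdot\sigma$ if and only if $W^0(t\sigma)$ is not contained in any proper parabolic subgroup of $W^\sigma$. The Weyl group $W$ and its $\sigma$-action depend only on the root system of $(\bG,\bT)$, which coincides with that of $(D\bG,\bT\cap D\bG)$; hence $W^\sigma$, its Coxeter generators $S_\sigma$ and its parabolic subgroups are intrinsic, the same whether computed in $\bG$ or in $D\bG$. Similarly, since $\bG^{t\sigma0}=\bG^{t_1\sigma0}$ differs from $(D\bG)^{t_1\sigma0}$ only by the central torus $((Z(\bG))^\sigma)^0$, these two connected reductive groups share the same Weyl group, so $W^0(t\sigma)=W^0(t_1\sigma)$ as subgroups of $W^\sigma$. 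Criterion~(5) applied in $\bG\cdot\sigma$ and in $D\bG\cdot\sigma$ thus reduces to the same condition on $W^\sigma$.

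The only mildly delicate point is the first step, where passing from the isogeny $\mu$ to $\sigma$-fixed identity components requires the dimension count supplied by the rational cocharacter decomposition; everything else is essentially formal once the criteria of Proposition~\ref{isolated} are in hand.
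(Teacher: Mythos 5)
Your proof is correct, and it reaches the central decomposition by a genuinely different route. The paper first establishes the group-level identity $\Gtso=((Z\bG)^\sigma)^0((D\bG)^{t\sigma})^0$ by invoking \cite[1.31]{grnc} (the subgroup $(Z\bG)^\sigma(D\bG)^{t\sigma}$ has finite index in $\Gts$), deduces the factorization of $\Tso$ from it, and then reuses the same identity with $t_1$ in place of $t$ to see that $W^0(t_1\sigma)$ is the same whether computed in $\bG$ or in $D\bG$. You instead prove the torus factorization directly and self-containedly from the $\sigma$-equivariant isogeny $Z(\bG)^0\times(\bT\cap D\bG)\to\bT$ together with the additivity of $\sigma$-invariants on $Y\otimes\BQ$; this avoids the external reference entirely and is a clean argument (the identification $\dim(\bS^\sigma)^0=\dim(Y(\bS)\otimes\BQ)^\sigma$ for a torus $\bS$ is exactly $Y(\bS^{\sigma 0})=Y(\bS)^\sigma$ as used throughout Section~1). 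The middle assertion is handled identically in both proofs. For the final equivalence you, like the paper, reduce to criterion (5) of Proposition \ref{isolated}, and the only point you assert rather than derive is that $\bG^{t_1\sigma 0}$ and $(D\bG)^{t_1\sigma 0}$ ``differ only by the central torus'' --- which is precisely the paper's displayed equation $\Gtso=((Z\bG)^\sigma)^0((D\bG)^{t\sigma})^0$. This does follow from what you have already shown: the root subgroups of $\Gtso$ relative to $\Tso$ all lie in $(D\bG)^{t_1\sigma}$ (the constants $C_{t\sigma,\CO}$ are computed in root subgroups of $D\bG$ and are unchanged by the central factor $z$), and a connected reductive group is generated by a maximal torus and its root subgroups, so your torus factorization finishes the job; alternatively one can argue purely at the level of $\Sigma_{t\sigma}$. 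A sentence to that effect would close the one small informality; otherwise the argument is complete, and arguably more elementary than the paper's at the torus step, at the cost of not obtaining the group-level equation as a byproduct.
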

\begin{proof}
We claim that \begin{equation}
\label{Gtso=}\Gtso=((Z\bG)^\sigma)^0((D\bG)^{t\sigma})^0 .
\end{equation}
Indeed, by 
\cite[1.31]{grnc} the index of
$(Z\bG)^\sigma(D\bG)^{t\sigma}$ in $\Gts$ is finite so that the identity components
coincide.
Since $((Z\bG)^\sigma)^0\subset \Tso$, we deduce  that any $t\in\Tso$ 
can be written $t=zt_1$
with $z\in((Z\bG)^\sigma)^0$ and $t_1\in((\bT\cap D\bG)^\sigma)^0$. We then have
$\bG^{t\sigma}=\bG^{t_1\sigma}$ since $z$ commutes with $\bG$ and with $\sigma$.
Hence $t\sigma$ is isolated in $\bG\cdot\sigma$ if and only if $t_1\sigma$ is isolated in 
$\bG\cdot\sigma$ which is equivalent to $t_1\sigma$ being isolated in
$D\bG\cdot\sigma$ since by 
Equation \ref{Gtso=} applied with $t_1$ instead of $t$ the Weyl group 
$W^0(t_1\sigma)$ is the same in $\bG$ and in $D\bG$.
\end{proof}
\begin{proposition} \label{Stsigma}
If the image of $t\sigma$ in $\LT$ is parameterized
by $\lambda\in\CC$, and $S_{t\sigma}$ is the set of reflections with respect
\index{Sts@$S_{t\sigma}$}
to the walls of $\CC$ containing $\lambda$, then $(W^0(t\sigma),S_{t\sigma})$
is a Coxeter system.
 The subset $\Delta_{t\sigma}$
\index{Dts@$\Delta_{t\sigma}$}
of $\tilde\Delta_\sigma$ 
corresponding to the reflections in $S_{t\sigma}$
is a basis of $\Phi_{t\sigma}$.
\end{proposition}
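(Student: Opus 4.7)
The plan is to apply the standard Bourbaki theorem on isotropy subgroups in a Coxeter complex to the affine Weyl group $(\Wa,\tilde S_\sigma)$, and then transport the result to $W^0(t\sigma)$ via the linearization map $\Wa\to W^\sigma$.

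First, by Proposition \ref{aff}(1), $(\Wa,\tilde S_\sigma)$ is a Coxeter system acting on $Y_\sigma\otimes\BQ$ with fundamental chamber $\CC$. By the classical theorem on point-stabilizers in a Coxeter complex (Bourbaki, Ch.~V, \S3, no.~3), for any $\lambda\in\CC$ the stabilizer $\Wa_\lambda$ is generated by the reflections in the walls of $\CC$ through $\lambda$, and together with this generating set it is a Coxeter system. By definition, this generating set is exactly $S_{t\sigma}$.

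Next, I would show that the projection $\pi\colon\Wa=Q(\Phi_\sigma^\vee)\rtimes W^\sigma\to W^\sigma$ restricts to a group isomorphism $\Wa_\lambda\xrightarrow\sim W^0(t\sigma)$. Injectivity is immediate: only the zero translation in $Q(\Phi_\sigma^\vee)$ fixes $\lambda$. For the image, observe that the affine reflection $s_{\alpha,k}$ with $\alpha\in\Phi_\sigma$ and $k\in\BZ$ lies in $\Wa_\lambda$ iff $\alpha(\lambda)=k$, and its image $s_\alpha$ then satisfies $s_\alpha(\lambda)-\lambda=-k\alpha^\vee\in\BZ\Phi_\sigma^\vee$. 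By Proposition \ref{Phi_tsigma} this condition characterizes precisely the generators of $W^0(t\sigma)$; conversely, any such $s_\alpha\in W^0(t\sigma)$ gives $\alpha(\lambda)\in\BZ$ (using that $\alpha^\vee$ is primitive in the reduced lattice $\BZ\Phi_\sigma^\vee$), so $s_{\alpha,\alpha(\lambda)}\in\Wa_\lambda$ projects to $s_\alpha$. Transporting the Coxeter structure through this isomorphism gives the first assertion.

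For the second assertion, Proposition \ref{aff}(3) sets up a bijection between $\tilde S_\sigma$ and $\tilde\Delta_\sigma$, so $S_{t\sigma}$ corresponds to a subset $\Delta_{t\sigma}\subset\tilde\Delta_\sigma$ of vectors in $\Phi_\sigma$ (taking $-\alpha_{0,i}$ to represent each affine wall); by construction $\alpha(\lambda)\in\BZ$ for every $\alpha\in\Delta_{t\sigma}$, so $\Delta_{t\sigma}\subset\Phi_{t\sigma}$. The elements of $\Delta_{t\sigma}$ are linearly independent as normals to the walls of the simplicial cone at $\lambda$ inside $\CC$, and the reflections they induce generate $W^0(t\sigma)$ by the previous step. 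Since $W^0(t\sigma)$ is also the Weyl group of the closed subsystem $\Phi_{t\sigma}$ (Proposition \ref{Phi_tsigma}), and $\Delta_{t\sigma}$ has the same cardinality as the rank of $W^0(t\sigma)$, it must be a simple system for $\Phi_{t\sigma}$.

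The main obstacle I expect is the last verification: namely, that $\Delta_{t\sigma}$ is a simple system of $\Phi_{t\sigma}$ in the Bourbaki sense, i.e.\ that every root of $\Phi_{t\sigma}$ is a nonnegative or nonpositive integral combination of $\Delta_{t\sigma}$. This is the Borel--de Siebenthal phenomenon, and the natural way to establish it is to note that the simplicial cone at $\lambda$ cut out by the walls in $\Delta_{t\sigma}$ is a fundamental chamber for the action of $W^0(t\sigma)$ on $Y_\sigma\otimes\BQ$ (obtained by restricting the $\Wa$-chamber decomposition near $\lambda$), from which the required sign condition on root coordinates follows.
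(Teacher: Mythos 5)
Your proof is correct; its first half coincides with the paper's argument and its second half takes a genuinely different route. For the Coxeter-system assertion both you and the paper invoke the same Bourbaki result (Ch.~V, \S3, no.~3) on stabilizers of points of $\CC$ in $(\Wa,\tilde S_\sigma)$; you merely spell out the isomorphism between the stabilizer of $\lambda$ in $\Wa$ and $W^0(t\sigma)$ given by the linearization map, which the paper leaves implicit in its appeal to Proposition~\ref{Phi_tsigma}. For the assertion that $\Delta_{t\sigma}$ is a base of $\Phi_{t\sigma}$, the paper proves a self-contained combinatorial lemma: if $\alpha_0$ is the highest root of a system $\Phi$ with base $\Delta$, then for $\alpha\in\Delta$ the set $(\Delta\cup\{-\alpha_0\})-\{\alpha\}$ is a base of the subsystem it generates, shown by an explicit coefficient computation forcing the coordinate along $-\alpha_0$ to be $0$ or $1$; the general $\Delta_{t\sigma}$ is then a subset of such a set. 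You instead use the chamber-theoretic characterization: $\Delta_{t\sigma}$ is the set of correctly oriented normals to the walls, through $\lambda$, of the chamber of the stabilizer containing $\CC$, and wall-normals of a chamber of a crystallographic reflection group form a simple system of its root system. Both arguments are standard and complete at the paper's level of rigor; yours is more geometric and treats all of $\tilde\Delta_\sigma$ uniformly, while the paper's lemma is elementary and avoids any appeal to the chamber/base correspondence. You were also right to single out where the real content of the second assertion lies: linear independence plus correct cardinality plus generation of $W^0(t\sigma)$ does not by itself yield a base (consider $\{\alpha_1,\alpha_1+2\alpha_2\}$ in type $B_2$), and your chamber argument is exactly what closes that gap, playing the role of the paper's lemma.
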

\begin{proof}
If $\lambda\in\CC$,
by Proposition \ref{Phi_tsigma} and
\cite[Chap. V \S3 no 3, (vii) of Proposition 1]{Bou}, a set of
generators of $W^0(t\sigma)$  is given by the reflections with respect to
the walls of $\CC$ containing $\lambda$. The set $S_{t\sigma}$ of these 
reflections is a subset of $\tilde S_\sigma$, thus
$(W^0(t\sigma),S_{t\sigma})$ is a parabolic subsystem of $(\tilde
S_\sigma,\Wa)$.

To see that $\Delta_{t\sigma}$ is a basis of
$\Phi_{t\sigma}$, we note that for any subset of $\tilde S_\sigma$
a basis of the corresponding parabolic subsystem is given by the corresponding
subset of $\tilde\Delta_\sigma$.
This is a consequence of 
the following lemma
\begin{lemma}
Let  $\Delta$ be a basis  of a root system  $\Phi$ and let $\alpha_0$ be the
highest   root  of   $\Phi$;  then   for  any   $\alpha\in\Delta$  the  set
$\Delta_\alpha:=(\Delta\cup\{-\alpha_0\})-\{\alpha\}$  is  a  basis  of  the
root subsystem it generates.
\end{lemma}
\begin{proof}Let  $\Delta:=\{\alpha_1,\ldots,\alpha_r\}$  with
$\alpha=\alpha_1$ and let $\alpha_0=\sum_i n_i\alpha_i$.
Let   $\beta\in\Phi^+$   be  of the form
$\sum_{\alpha_i\in\Delta_\alpha}m_i\alpha_i$ with $m_i\in\BZ_{\ge 0}$. 
We have
$\beta=-\frac{m_1}{n_1}(-\alpha_0)+\sum_{i\geq2}(m_i-n_i\frac{m_1}{n_1})\alpha_i$.
Since  $\alpha_0$ is the highest root we have $0\leq\frac{m_1}{n_1}\leq 1$.
As  $\frac{m_1}{n_1}$ is an integer, it is equal to 0 or 1. If $m_1=n_1$ we
have   $\beta=-(-\alpha_0)+\sum_{i\geq2}(m_i-n_i)\alpha_i$   and   all  the
coefficients  are  non-positive.  If  $m_1=0$  all  coefficients  are  non
negative.
\end{proof}
\end{proof}
\begin{proposition} \label{param isolated} Assume
$\bG^\sigma$ semisimple, so that every conjugacy class of \qss\ elements in
$\bG\cdot\sigma$ has a representative parameterized by $\lambda\in\CC$.

If  $t\sigma$ is such a representative, then
$t\sigma$  is   isolated  if  and  only  if
$|\tilde S_{\sigma,i}-S_{t\sigma}|=1$
for each irreducible component $S_{\sigma,i}$ of $S_\sigma$.

Conversely, if $\Omega$ is a subset of
$(\tilde S_\sigma)_{p'}:=\{s\in\tilde S_\sigma\mid \frac 1{n_s}
\varpi^\vee_s\in Y_\sigma\otimes\BQ_{p'}\}$
such that $|\Omega\cap\tilde S_{\sigma,i}|=1$ for each $i$, let
$\{s_i\}=\tilde S_{\sigma,i}\cap\Omega$, and let $\varpi^\vee_{s_i}$ be
the corresponding fundamental coweight
(where by convention $\varpi_{s_{0,i}}^\vee=0$).
Then $\sum_i\frac1{n_{s_i}}\varpi_{s_i}^\vee$ is the unique element
of $\CC$ such that for a corresponding $t\sigma$ we have
$S_{t\sigma}=\cup_i (\tilde S_{\sigma,i}-\{\alpha_{s_i}\})$.

It follows that the $\bG$-conjugacy classes of isolated elements
of finite order of $\bG\cdot\sigma$ are
parameterized by the $\tW/\Wa$-orbits of 
subsets $\Omega$ of $(\tilde S_\sigma)_{p'}$ such that 
$|\Omega\cap\tilde S_{\sigma,i}|=1$ for each $i$; the action of
$\tW/\Wa$ comes from the embedding $\tW\subset\Wa'$ --- which exists
since $\Gs$ is semisimple ---
and from the identification of $\Wa'/\Wa$ to a group of automorphisms of $\CC$,
see Proposition \ref{aff}(5).
\end{proposition}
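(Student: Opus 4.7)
The plan is to exploit the dictionary established in Proposition \ref{Stsigma}: for $\lambda\in\CC$ parameterizing $t\sigma$, the set $S_{t\sigma}\subset\tilde S_\sigma$ is the set of reflections with respect to walls of $\CC$ containing $\lambda$, and $\Delta_{t\sigma}\subset\tilde\Delta_\sigma$ is the corresponding basis of $\Phi_{t\sigma}$. The result is then a purely combinatorial statement about which vertices of $\CC$ give an element satisfying the isolated condition of Proposition \ref{isolated}.

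First I would handle the characterization $|\tilde S_{\sigma,i}-S_{t\sigma}|=1$. By Proposition \ref{isolated}(4'), $t\sigma$ is isolated iff $\Phi_{t\sigma}$ spans the same $\BQ$-subspace of $X(\Tso)\otimes\BQ$ as $\Phi_\sigma$; equivalently, working component by component, iff for each $i\in I$ the set $\Delta_{t\sigma}\cap\tilde\Delta_{\sigma,i}$ is a basis of the span of $\Delta_{\sigma,i}$. The lemma proved inside Proposition \ref{Stsigma} shows that any proper subset of $\tilde\Delta_{\sigma,i}$ is linearly independent, so $\Delta_{t\sigma}\cap\tilde\Delta_{\sigma,i}$ has full rank iff it contains exactly $|S_{\sigma,i}|$ elements of the $|S_{\sigma,i}|+1$ element set $\tilde S_{\sigma,i}$; equivalently, iff $|\tilde S_{\sigma,i}-S_{t\sigma}|\leq 1$. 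The inequality cannot be strict, since $\lambda\in\CC_i$ cannot lie on \emph{every} wall of the simplex $\CC_i$ unless $\dim\CC_i=0$; so equality holds and we get the stated criterion.

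Next I would treat the converse: given $\Omega=\{s_i\}_{i\in I}\subset(\tilde S_\sigma)_{p'}$ with one element per component, set $\lambda:=\sum_i\frac1{n_{s_i}}\varpi_{s_i}^\vee$, interpreting $\varpi_{s_{0,i}}^\vee$ as $0$. By Proposition \ref{aff}(2), $\lambda$ is precisely the vertex of the product simplex $\CC$ whose $i$-th coordinate is the vertex of $\CC_i$ opposite to the wall indexed by $s_i$. Hence $\lambda\in\CC$, it lies on exactly the walls indexed by $\tilde S_{\sigma,i}-\{s_i\}$ for each $i$, and by Proposition \ref{Stsigma} any $t\sigma$ of class parameterized by $\lambda$ satisfies $S_{t\sigma}=\cup_i(\tilde S_{\sigma,i}-\{s_i\})$. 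Uniqueness of $\lambda\in\CC$ with this property is immediate: the intersection of those walls inside $\CC$ is a single vertex. The $p'$-condition on each $s_i$ is exactly what is needed so that $\lambda\in Y_\sigma\otimes\BQ_{p'}$, ensuring the existence of a corresponding $t\sigma$ of finite order (coprime to $p$) by Theorem \ref{parametrization}.

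Finally, for the classification: Corollary \ref{root system of Gtso2} parameterizes $\bG$-conjugacy classes of \qss{} elements of finite order in $\Gun$ by a fundamental domain of $\tW$ in $Y_\sigma\otimes\BQ_{p'}$. By the remark following Proposition \ref{aff} (using that $\Gs$ is semisimple, so $\tW\subset\Wa'$), we can take a fundamental domain inside $\CC$ by quotienting by the action of $\tW/\Wa$ via the identification of Proposition \ref{aff}(5). Combining with the first two parts, isolated classes correspond precisely to $\tW/\Wa$-orbits of subsets $\Omega\subset(\tilde S_\sigma)_{p'}$ with $|\Omega\cap\tilde S_{\sigma,i}|=1$ for each $i$. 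The main subtlety I anticipate is bookkeeping-level rather than conceptual: one must be careful that the convention $\varpi_{s_{0,i}}^\vee=0$ makes the map $\Omega\mapsto\lambda$ a bijection onto the set of vertices of $\CC$, so that the $\tW/\Wa$-action on $\Omega$-sets matches its action on vertices of $\CC$ via Proposition \ref{aff}(4)--(5).
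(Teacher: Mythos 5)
Your proposal is correct and follows essentially the same route as the paper: both reduce ``isolated'' via Proposition \ref{isolated} to the condition that the roots attached to $S_{t\sigma}$ have full rank in each component (you use criterion (4$'$) and linear independence of proper subsets of $\tilde\Delta_{\sigma,i}$, the paper uses the equivalent criterion (5) on the rank of the Coxeter system), and both identify the point attached to $\Omega$ as the vertex of the product simplex given by Proposition \ref{aff}(2). Your remarks on the $p'$-condition and on the final $\tW/\Wa$-orbit bookkeeping are slightly more explicit than the paper's proof but consistent with it.
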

\begin{proof}
By Proposition \ref{Stsigma}, $(W^0(t\sigma),S_{t\sigma})$ is a Coxeter system.
The class of $t\sigma$ is isolated if and only if $S_{t\sigma}$ has same rank as
$W$, that is if and only if, setting $\Omega:=\tilde S-S_{t\sigma}$, we have 
$|\Omega\cap\tilde S_{\sigma,i}|=1$
for each irreducible component $S_{\sigma,i}$ of $S_\sigma$.

Conversely such a set $\Omega\subset\tilde S_\sigma$ defines a unique point
$\lambda\in\CC$ namely the intersection of the walls  of $\CC$ corresponding 
to $\tilde S-\Omega$: indeed,
since in each irreducible component we consider all the walls but one,
such a $\lambda$ is the product of extremal points of the simplices $\CC_i$,
hence, by Proposition \ref{aff}(2), is equal to $\sum_i\frac1{n_{s_i}}\varpi_{s_i}^\vee$.
\end{proof}
We will need the following lemma in the proof of the next theorem.
\begin{lemma} \label{centerquotient}
If $\bH$ is a connected reductive group and $\bK$ is a closed
subgroup of $Z\bH$, then $Z(\bH/\bK)=(Z\bH)/\bK$.
\end{lemma}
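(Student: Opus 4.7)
The plan is to prove the two inclusions of $Z(\bH/\bK) = (Z\bH)/\bK$ separately. The easy direction $(Z\bH)/\bK \subseteq Z(\bH/\bK)$ is immediate, since the image of an element that commutes with all of $\bH$ commutes with all of $\bH/\bK$. (Also note that $\bK$ is automatically normal in $\bH$, so the quotient makes sense.) The substance of the lemma is the reverse inclusion.

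For the hard direction, I would approach it via maximal tori and roots. First observe that $\bH/\bK$ is again connected reductive: it is connected as the image of a connected group, and a quotient of a reductive group by a closed central subgroup is reductive. Next, any maximal torus $\bT$ of $\bH$ contains $Z\bH$ and hence contains $\bK$, and every maximal torus of $\bH/\bK$ arises as $\bT/\bK$ for such a $\bT$. Given $\bar h \in Z(\bH/\bK)$, the center of a connected reductive group lies in every maximal torus, so we may lift $\bar h$ to some $h \in \bT$.

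The main step is then the identification of the root data. Since every root $\alpha$ of $(\bH,\bT)$ is trivial on $Z\bH \supseteq \bK$, the root $\alpha$ descends through $\bT \to \bT/\bK$ to a character $\bar\alpha$ of $\bT/\bK$; one checks that $\alpha \mapsto \bar\alpha$ is a bijection between the roots of $(\bH,\bT)$ and those of $(\bH/\bK,\bT/\bK)$, with $\bar\alpha(\bar h) = \alpha(h)$. Since in a connected reductive group the center is exactly the simultaneous kernel of all roots on a maximal torus, the condition $\bar h \in Z(\bH/\bK)$ reads $\bar\alpha(\bar h) = 1$ for all $\bar\alpha$, which is equivalent to $\alpha(h) = 1$ for all $\alpha$, i.e.\ $h \in Z\bH$. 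This gives $\bar h \in (Z\bH)/\bK$.

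I do not expect any serious obstacle. The only points requiring a small verification are the preservation of ``connected reductive'' under quotient by a central subgroup and the bijection of root systems under the quotient $\bT \to \bT/\bK$; both are standard. A purely elementary alternative, avoiding the root system language, would be to observe that $\bar h \in Z(\bH/\bK)$ means $[h,-]\colon \bH \to \bK$ takes values in $\bK \subseteq Z\bH$, and the standard identity shows this is then a group homomorphism into the abelian group $\bK$; it must kill $D\bH$ (by abelianness) and $Z\bH$ (trivially), hence all of $\bH = Z\bH \cdot D\bH$, so $h \in Z\bH$.
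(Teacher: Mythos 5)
Your main argument is essentially the paper's own proof: both identify the roots of $\bH/\bK$ with those of $\bH$ via the quotient $\bT\to\bT/\bK$ (using that $\bK\subseteq Z\bH$ lies in the kernel of every root) and then use that the center of a connected reductive group is the common kernel of the roots on a maximal torus. Your elementary alternative via the commutator map $[h,-]\colon\bH\to\bK$ being a homomorphism into an abelian central subgroup, hence trivial on $D\bH\cdot Z\bH=\bH$, is also correct and avoids root data entirely, but the primary route coincides with the paper's.
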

\begin{proof}Let $\bT$ be a maximal torus of $\bH$.
The roots of $\bH$ are the inflations to $\bT$ 
of the roots of $\bH/\bK$ with respect to $\bT/\bK$ since the root subgroups 
of $\bH$ are mapped isomorphically to the root subgroups of $\bH/\bK$.
The center of $\bH/\bK$ is the intersection of the kernels of the roots, and
$\bK$ lies in that intersection, whence the result.
\end{proof}

In what follows, we write $e(G)$ for the exponent of a group $G$ and
\index{eG@$e(G)$}
$e(x)$ for the order of $x\in G$. For an algebraic group $\bG$,
\index{ex@$e(x)$}
we set $\AZ(\bG):=Z(\bG)/Z^0(\bG)$.
\index{AZ@$\AZ$}
\begin{theorem}\label{A(ZGtso)} As in Context \ref{fix qss},
let $\sigma$ be quasi-central inducing an automorphism of order
$n_\sigma$ and let $t\in\Tso$ be of finite order $e(t)$;
if $Z\bG$ is connected then $e(\AZ(\Gtso))$ divides $n_\sigma e(t)$.
\end{theorem}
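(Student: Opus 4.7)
The plan is to recast $e(\AZ(\Gtso))$ as the exponent of a lattice quotient, then apply Bonnafé's theorem \cite{cedric} in an auxiliary group. Since $\Tso$ is a maximal torus of $\Gtso$ (Proposition \ref{root system of Gtso}), the centre $Z(\Gtso)$ lies in $\Tso$, and $\AZ(\Gtso)$ is identified, modulo $p$-torsion (which contributes nothing to components), with the torsion subgroup of $X_\sigma/\BZ\Sigma_{t\sigma}$. The theorem is thus equivalent to the assertion that $n_\sigma e(t)\chi\in\BZ\Sigma_{t\sigma}$ for every $\chi\in X_\sigma\cap\BQ\Sigma_{t\sigma}$.

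The factor $n_\sigma$ is extracted by $\sigma$-averaging: if $\chi=\pi(\psi)$ with $\psi\in X$, then $n_\sigma\chi=\sum_{i=0}^{n_\sigma-1}\sigma^i(\psi)\in X^\sigma$. As subspaces of $X\otimes\BQ$, $\BQ\Sigma_{t\sigma}=\BQ\Phi_{t\sigma}$ (each $\pi(\alpha)\in\Sigma_{t\sigma}$ is a positive rational multiple of the corresponding element of $\Phi_{t\sigma}$), and $\BZ\Phi_{t\sigma}\subseteq\BZ\Sigma_{t\sigma}$ since $\bar\alpha=|\CO(\alpha)|\pi(\alpha)$ in the non-special case (and analogously $2\bar\alpha=2|\CO(\alpha)|\pi(\alpha)$ in the special case). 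Hence it suffices to prove
\[
e(t)\bigl(X^\sigma\cap\BQ\Phi_{t\sigma}\bigr)\ \subseteq\ \BZ\Phi_{t\sigma}.
\]

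The factor $e(t)$ is then provided by Bonnafé's theorem applied inside the connected reductive group $\bG'$ with root datum $R(\sigma)=(\Phi_\sigma,\Phi_\sigma^\vee,X^\sigma,Y_\sigma)$ and maximal torus $\LT$. By Proposition \ref{Phi_tsigma}, the root system of $C_{\bG'}(\bar t)^0$---where $\bar t\in\LT$ is the image of $t$, of order dividing $e(t)$---is precisely $\Phi_{t\sigma}$. The displayed inclusion is therefore the statement $e(\AZ(C_{\bG'}(\bar t)^0))\mid e(t)$, which is the conclusion of Bonnafé's theorem \cite{cedric} whenever the centre of $\bG'$ is connected. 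The hypothesis that $Z\bG$ is connected (i.e.\ $X/\BZ\Sigma$ is torsion-free), combined with the classification of Proposition \ref{R(sigma)} and the description of $\Phi_\sigma$ in Proposition \ref{Phi_sigma}, yields this connectedness outside the type-$A_{2r}$ case with special orbits.

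The main obstacle is this $A_{2r}$ special-orbit case (with $p\neq 2$), where the doubled roots $2\bar\alpha\in\Phi_\sigma$ make $R(\sigma)$ simply-connected of type $C_r$ with fundamental group of order $n_\sigma=2$ (Proposition \ref{R(sigma)}(4)). A direct application of Bonnafé then loses an extra factor of $2$; but this factor is exactly $n_\sigma$ in this case, and so is already accounted for by the bound of the theorem. The remedy is to argue by hand using the parametrisation of Theorem \ref{parametrization} and Corollary \ref{root system of Gtso2}---writing the parameter $\lambda\in Y_\sigma\otimes\BQ_{p'}$ representing $\bar t$, with $e(t)\lambda\in Y_\sigma$, and using $\Phi_{t\sigma}=\{\phi\in\Phi_\sigma:\phi(\lambda)\in\BZ\}$---to verify the displayed inclusion by a direct lattice calculation that does not re-incur the factor $n_\sigma$ already extracted.
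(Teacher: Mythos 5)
Your opening reduction is fine: since $\Tso$ is a maximal torus of $\Gtso$ with character lattice $X_\sigma$ and root system $\Sigma_{t\sigma}$, the group $e(\AZ(\Gtso))$ is indeed controlled by the torsion of $X_\sigma/\BZ\Sigma_{t\sigma}$, and the averaging step $\chi\mapsto n_\sigma\chi\in X^\sigma$ correctly extracts the factor $n_\sigma$. But the two steps that carry all the weight are not proved. First, the statement you attribute to \cite{cedric} --- that $e(\AZ(C_{\bG'}(\bar t)^0))$ divides the order of $\bar t$ for a connected group $\bG'$ with connected centre --- does not appear in that paper; it is exactly the $n_\sigma=1$ instance of the theorem you are proving (the paper's acknowledgements make clear the result was an open question of Bonnaf\'e's, and its own proof never invokes such a statement). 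The paper establishes this core case from scratch, by reducing to isolated elements, then to the adjoint simple case, and finally using the alcove parameterization $\lambda=\frac1{n_s}\varpi_s^\vee$ of isolated classes to show that any $z\in Z(\Gtso)$ has image in $\LT$ lying in the cyclic group generated by the image of $t$. Some such argument must be supplied; citing it is circular.

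Second, the factorization through $\BZ\Phi_{t\sigma}$ genuinely loses a factor of $2$ whenever special orbits occur, and this loss is \emph{not} absorbed by the bound: you have already spent the factor $n_\sigma$ in the averaging step. Concretely, take $\bG=\GL_3$ ($p\neq2$), $\sigma$ of symplectic type, $t=1$, so $n_\sigma e(t)=2$ and $\Gso=\SL_2$ with $|\AZ(\Gso)|=2$. Writing $X=\BZ e_1\oplus\BZ e_2\oplus\BZ e_3$ with $\sigma(e_i)=-e_{4-i}$, one has $X_\sigma=\frac12\BZ(e_1-e_3)$, $\BZ\Sigma_{t\sigma}=\BZ(e_1-e_3)$ and $\BZ\Phi_{t\sigma}=2\BZ(e_1-e_3)$. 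For $\chi=\frac12(e_1-e_3)$ the required conclusion $n_\sigma e(t)\chi\in\BZ\Sigma_{t\sigma}$ holds, but $e(t)\cdot(n_\sigma\chi)=e_1-e_3\notin\BZ\Phi_{t\sigma}$, so your displayed inclusion $e(t)(X^\sigma\cap\BQ\Phi_{t\sigma})\subseteq\BZ\Phi_{t\sigma}$ is false and the chain of inclusions breaks precisely where the theorem is sharp. The "direct lattice calculation that does not re-incur the factor $n_\sigma$" is therefore not a routine remedy but the actual missing content for type $A_{2r}$; combined with the unavailable input for the connected case, no case of the theorem is completely proved as written.
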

\begin{proof}
We prove this by a series of reductions.

$\bullet$ The first step is to reduce to the case where $t\sigma$ is isolated.
Otherwise $\Gtso$ is in a proper Levi subgroup of $\bG$ and we may replace
$\bG$ with this proper Levi subgroup without changing $\Gtso$ or $t\sigma$.

$\bullet$ The next step is to assume that $Z\bG=[Z\bG,\sigma]$.
Indeed we may quotient $\bG$ by $((Z\bG)^\sigma)^0\subset Z^0(\Gtso)$
which does not change $\AZ(\Gtso)$ nor $n_\sigma $ and can only replace $e(t)$ with a
divisor. The center of $\bG/((Z\bG)^\sigma)^0$ is
$Z':=Z\bG/((Z\bG)^\sigma)^0$  by  Lemma  \ref{centerquotient}  and satisfies
$Z'=[Z',\sigma]$ since $Z\bG=((Z\bG)^\sigma)^0[Z\bG,\sigma]$.

$\bullet$  The  next  step  is  to  reduce  to  the  case $\bG$ adjoint. If
$\bG_\ad=\bG/Z\bG$ then we have an exact sequence $1\to  (Z\bG)^\sigma\to
\Gts\to\bG_\ad^{t\sigma}\to  1$;  indeed  the  only non-trivial point is the
surjectivity;  if  $\bar  g\in\bG_\ad^{t\sigma}$ then it is the image  of
$g\in\bG$  such that  $g\inv\lexp{t\sigma}g\in Z\bG$,  and by  the equality
$Z\bG=[Z\bG,\sigma]$ we may write $g\inv
\lexp{t\sigma}g=z\inv\lexp{t\sigma}z$  with  $z\in  Z\bG$,  thus $gz\inv\in
\Gts$ has same image in $\bG_\ad$ as $g$.

Taking the identity components, the above exact sequence gives the exact
sequence 
$1\to Z\bG\cap\Gtso\to \Gtso\to (\bG_\ad^{t\sigma})^0\to 1$.
Taking centers, we get by Lemma \ref{centerquotient} the exact sequence
$1\to  Z\bG\cap\Gtso\to  Z(\Gtso)\to Z((\bG_\ad^{t\sigma})^0)\to 1$.

Since we have $(Z\bG\cap\Gtso)^0=((Z\bG)^{t\sigma})^0=((Z\bG)^\sigma)^0$,
the exponent of $(Z\bG\cap\Gtso)/(Z\bG\cap\Gtso)^0$ divides that
of $(Z\bG)^\sigma/((Z\bG)^\sigma)^0$,
and the exponent of this last group divides $n_\sigma $ by
Lemma \ref{A(Ts)}; it follows that $e(\AZ\Gtso)$ divides the lcm
of  $n_\sigma $ and $e(\AZ((\bG_\ad^{t\sigma})^0))$.
It is thus sufficient to prove the theorem for $\bG_\ad$.

$\bullet$ The next step is to reduce to the case $\bG$ simple. Indeed,
since $\bG$ is adjoint, it is uniquely a direct product of simple groups,
permuted by $\sigma$. If we consider a subset of $i$ components
cyclically permuted by $\sigma$, the group of fixed points under $t\sigma$ is
isomorphic to the group of fixed points under $(t\sigma)^i$ in one component.
Since $n_\sigma e(t)$ (resp.~$e(\AZ\Gtso)$) is the lcm of the analogous numbers
for each component, it is enough to show the theorem for a simple group.

$\bullet$ For a simple group we use the description of isolated elements in
Proposition \ref{param  isolated}; thus $t\sigma$ has same centralizer as an element of
finite  order  whose  image  $\overline  t$  in  $\LT$  is parameterized by
$\lambda=\frac1{n_s}\varpi_s^\vee\in    Y_\sigma\otimes\bQ_{p'}$   where
$\Omega=\{s\}$. If  $z$  is
central  in  $\Gtso$ then $(\bG^{zt\sigma})^0\supseteq\Gtso$, 
thus  $W^0(t\sigma)\subseteq W^0(zt\sigma)$. By 
Proposition \ref{Phi_tsigma} we deduce $\Phi_{t\sigma}\subseteq\Phi_{zt\sigma}$.
Since $\Phi_{t\sigma}$ (resp.~$\Phi_{zt\sigma}$)
consists of  the roots  of $\Phi_\sigma$ such that $\alpha(\lambda)\in\BZ$
(resp.~$\alpha(\mu)\in\BZ$ if $\mu$ parameterizes the  image of $tz$ in $\LT$),
we have  $\alpha(\mu)\in\BZ$ if $\alpha(\lambda)\in\BZ$.
It   follows   that  $\mu=\frac  a{n_s}\varpi_s^\vee$  modulo
$Y_\sigma$  for some $a\in\BZ$, thus the image $\overline {tz}$
of $tz$ in $\LT$ is a power of $\overline
t$. Thus $e(\overline z)$ divides $e(\overline t)$.

Since the kernel of the map $\Tso\to\LT$ has exponent dividing $n_\sigma $
by Lemma \ref{A(Ts)}, it follows that $e(z)$ divides $n_\sigma  e(t)$.
\end{proof}
\subsection*{Quasi-isolated elements}
\begin{definition}
We say that the \qss\ element $t\sigma$ is {\em quasi-isolated}
\index{quasi-isolated}
if $\Gts$ is not contained in a $\sigma$-stable Levi subgroup of 
a proper $\sigma$-stable parabolic subgroup of $\bG$.
\end{definition}
In particular an isolated element is quasi-isolated.
\begin{proposition}\label{quasi-isolated} The following are equivalent:
\begin{enumerate}
\item $t\sigma$ is quasi-isolated.
\item $W(t\sigma)$ is not in a proper parabolic subgroup of $W^\sigma$.
\end{enumerate}
\end{proposition}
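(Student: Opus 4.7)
The plan is to mirror the proof of Proposition \ref{isolated}, replacing the connected centralizer $\Gtso$ and its Weyl group $W^0(t\sigma)$ with the full centralizer $\Gts$ and $W(t\sigma)$. Two ingredients will do most of the work. First, Propositions \ref{AG(ts)} and \ref{exactsequence} applied with $t\sigma$ in place of $\sigma$ yield $\Gts/\Gtso \simeq N_\bG(\bT)^{t\sigma}/N_\Gtso(\Tso)$ together with an exact sequence
$$1 \to \bT^{t\sigma}/\Tso \to N_\bG(\bT)^{t\sigma}/N_\Gtso(\Tso) \to W(t\sigma)/W^0(t\sigma) \to 1,$$
so that $\Gts$ is generated by $\Gtso$ and by lifts in $N_\bG(\bT)^{t\sigma}$ of the elements of $W(t\sigma)$. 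Second, the result recalled in Proposition \ref{root system of Gtso}(2) (from \cite[1.32]{St}) shows that every parabolic subgroup of $(W^\sigma,S_\sigma)$ is, up to $W^\sigma$-conjugacy, of the form $(W_J)^\sigma$ for some $\sigma$-stable $J\subseteq\Pi$, and is proper iff $J$ is proper; the associated standard Levi $\bL_J$ is then a $\sigma$-stable proper Levi of a $\sigma$-stable parabolic of $\bG$.

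For the implication (not 2)$\Rightarrow$(not 1), suppose $W(t\sigma)\subseteq (W_J)^\sigma$ for some proper $\sigma$-stable $J\subsetneq\Pi$. I would show $\Gts\subseteq\bL_J$ in two steps. Since $W^0(t\sigma)\subseteq W(t\sigma)\subseteq W_J$, every reflection attached to a root of $\Sigma_{t\sigma}$ lies in $W_J$; unwinding the construction of the root subgroups of $\Gtso$ in Proposition \ref{root system of Gtso}(1), each such root subgroup of $\Gtso$ is built from root subgroups $\bU_\alpha$ with $\alpha\in\Sigma_J$, hence lies in $\bL_J$. Combined with $\Tso\subseteq\bT\subseteq\bL_J$, this gives $\Gtso\subseteq\bL_J$. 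Next, any lift in $N_\bG(\bT)^{t\sigma}$ of an element of $W(t\sigma)\subseteq W_J$ lies in $N_{\bL_J}(\bT)\subseteq\bL_J$. By the exact sequence above, $\Gts\subseteq\bL_J$, so $t\sigma$ is not quasi-isolated.

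For the reverse direction (not 1)$\Rightarrow$(not 2), suppose $\Gts\subseteq\bL$ for some proper $\sigma$-stable Levi $\bL$ of a $\sigma$-stable parabolic. After replacing $t\sigma$ by a $\bG$-conjugate—legitimate since both conditions in the proposition are conjugacy-invariant—I may assume $\bT\subseteq\bL$, using conjugacy of the maximal $t\sigma$-stable tori of $\bL$ containing $\Tso$, each of which is a maximal torus of $\bG$. Then $\bL=\bL_J$ for a proper $\sigma$-stable $J$, and $N_\bG(\bT)^{t\sigma}\subseteq\Gts\subseteq\bL_J$, so the image $W(t\sigma)$ of $N_\bG(\bT)^{t\sigma}$ in $W$ is contained in the image $W_J$ of $N_{\bL_J}(\bT)$; hence $W(t\sigma)\subseteq W_J\cap W^\sigma=(W_J)^\sigma$, a proper parabolic of $W^\sigma$ (properness follows because the rank of $(W_J)^\sigma$ is the number of $\sigma$-orbits on $J$, strictly less than that on $\Pi$). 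The one delicate point is precisely this conjugation step arranging $\bT\subseteq\bL$; the remainder is a routine translation of the isolated argument through the two exact sequences.
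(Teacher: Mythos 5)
Your proof is correct and follows essentially the same route as the paper's: both rest on the correspondence between parabolic subgroups of $W^\sigma$ and $\sigma$-stable Levi subgroups containing $\bT$ of $\sigma$-stable parabolic subgroups of $\bG$, the generation of $\Gts$ by $\Gtso$ and representatives of $W(t\sigma)$ in $N_\bG(\bT)^{t\sigma}$ (via Propositions \ref{AG(ts)} and \ref{exactsequence}), and the reduction of the containment $\Gtso\subseteq\bL$ to the isolated case (the paper simply cites the equivalence (1)$\Leftrightarrow$(5) of Proposition \ref{isolated} where you unwind the root subgroups by hand). The one ``delicate point'' you flag is in fact automatic, so no conjugation of $t\sigma$ is needed: since $\Tso\subseteq\Gts\subseteq\bL$ and no root of $\bG$ vanishes on $\Tso$ (each $\pi(\alpha)\neq0$), we have $C_\bG(\Tso)^0=\bT$, so any maximal torus of $\bL$ containing $\Tso$ --- being a maximal torus of $\bG$ inside $C_\bG(\Tso)^0$ --- equals $\bT$, whence $\bT\subseteq\bL$.
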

\begin{proof}
The parabolic subgroups of the Weyl group of a
reductive group are in one-to-one correspondence with the Levi subgroups
containing a given maximal torus. Applying this in $\Gso$ we deduce from
\cite[1.25(ii)]{grnc} that parabolic subgroups of $W^\sigma$ are in
one-to-one correspondence with the $\sigma$-stable Levi subgroups
containing $\bT$ of $\sigma$-stable parabolic subgroups of $\bG$.

We prove that (1) implies (2). If (2) fails,
by the above $W(t\sigma)$
has representatives in a $\sigma$-stable Levi
subgroup $\bL$ of a proper $\sigma$-stable parabolic subgroup.
By the proofs of Proposition \ref{AG(ts)} and Proposition \ref{exactsequence},
$\Gts$ is generated by $\Gtso$ and representatives of
$W(t\sigma)$. By the equivalence of (1) and (5) in Proposition \ref{isolated}
the group $\Gtso$ is
contained in the Levi subgroup $\bL$, 
thus the whole of $\Gts$ is contained
in a proper Levi subgroup, that is $t\sigma$ is not quasi-isolated.

We prove that (2) implies (1). If (1) fails then $\Gts$ is
contained in a $\sigma$-stable Levi subgroup $\bL$ of a proper
$\sigma$-stable parabolic subgroup, hence representatives of all elements of
$W(t\sigma)$ can be taken in $\bL$, thus $W(t\sigma)$ is contained in the
Weyl group of $\bL$ and (2) fails.
\end{proof}
\begin{proposition}\label{W(t sigma)}
If the class of $t\sigma$ is parameterized by $\lambda$ (see Theorem
\ref{parametrization})
then $W(t\sigma)=\{w\in W^\sigma\mid w(\lambda)-\lambda\in Y_\sigma\}$.
\end{proposition}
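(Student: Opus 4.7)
The plan is to unwind the commutator condition defining $W(t\sigma)$, reduce it to a condition in $\LT$ using the quasi-centrality of $\sigma$, and then transport it through the parameterization of Theorem~\ref{parametrization}.

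Since $t\in\bT$ acts trivially on both $\bT$ and $W$, we have $[\bT,t\sigma]=[\bT,\sigma]$ and $W^{t\sigma}=W^\sigma$, so the definition reads $W(t\sigma)=\{w\in W^\sigma\mid [t\sigma,\dot w]\in[\bT,\sigma]\}$. By Context~\ref{fix qss} the element $\sigma$ is quasi-central, so Proposition~\ref{Wsigma=Wosigma} yields $W(\sigma)=W^\sigma$, and every $w\in W^\sigma$ admits a $\sigma$-fixed representative $\dot w\in N_\bG(\bT)^\sigma$. For such a $\dot w$ one computes
$$[t\sigma,\dot w]=(t\sigma)\dot w(t\sigma)^{-1}\dot w^{-1}=t\cdot\lexp\sigma{\dot w}\cdot t^{-1}\dot w^{-1}=t\cdot w(t^{-1})\in\bT.$$
Thus the defining condition becomes $t\cdot w(t)^{-1}\in[\bT,\sigma]$, i.e.\ $\overline t=w(\overline t)$ in $\LT$, where the $W^\sigma$-action on $\LT$ is well-defined because $W^\sigma$ stabilizes $[\bT,\sigma]$ (as $w$ and $\sigma$ commute).

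Next I would invoke Theorem~\ref{parametrization} together with Lemma~\ref{X(T/LT)}'s identification $Y_\sigma=Y(\LT)$: the map $\lambda\otimes x\mapsto\lambda(\iota(x))$ is the standard isomorphism $Y_\sigma\otimes(\BQ/\BZ)_{p'}\xrightarrow\sim\LT_{p'\text{-tors}}$, and it sends the class of $\lambda\in Y_\sigma\otimes\BQ_{p'}$ modulo $Y_\sigma$ to $\overline t$. Since $w(\mu(a))=(w\mu)(a)$ for any cocharacter $\mu$ and any $a\in k^\times$, the action of $W^\sigma$ on $\LT$ is intertwined with its natural action on $Y_\sigma$. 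Therefore $w(\overline t)=\overline t$ in $\LT$ is equivalent, via the injectivity of the above map on $p'$-torsion, to $w(\lambda)-\lambda\in Y_\sigma$ inside $Y_\sigma\otimes\BQ_{p'}$, which is the desired conclusion.

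The only delicate point is the availability of a $\sigma$-fixed representative $\dot w$: without it, one would pick up an extra factor $w(\dot w^{-1}\lexp\sigma{\dot w})$ in the commutator, encoding precisely the obstruction to such a lift. Quasi-centrality of $\sigma$ kills this obstruction (by Proposition~\ref{Wsigma=Wosigma}), which is why the statement is formulated in the setting of Context~\ref{fix qss}.
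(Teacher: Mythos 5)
Your proof is correct and follows essentially the same route as the paper: identify $W(t\sigma)$ with the stabilizer of $\bar t$ in $W^\sigma$ acting on $\LT$, then transport this through the $W^\sigma$-equivariant isomorphism of Theorem \ref{parametrization}. The paper's own proof is a one-liner that leaves the commutator computation $[t\sigma,\dot w]=t\cdot w(t)^{-1}$ (and the role of quasi-centrality in choosing $\dot w\in N_\bG(\bT)^\sigma$) implicit, so your write-up simply makes explicit what the authors take for granted.
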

\begin{proof} The class of $t\sigma$ being parameterized by 
$\lambda\in Y_\sigma\otimes\BQ$ means that the image $\bar t$ of $t$
in $\LT$ is identified with the image $\bar\lambda$ of $\lambda$ in 
$Y_\sigma\otimes\BQ/\BZ$
(see the proof of Theorem \ref{parametrization}); hence $w\in W^\sigma$ fixes 
$\bar t$
if and only if it fixes $\bar \lambda$, whence the assertion.
\end{proof}
\begin{remark}
The  following example shows that an element quasi-isolated in $\bG\cdot\sigma$
may not be quasi-isolated in $D(\bG)\cdot\sigma$, which forbids taking
the derived group as a method to reduce the study of quasi-isolated elements
to the case of semisimple groups. Let $\bG=\GL_2$, thus
$D(\bG)=\SL_2$.  Let  us  take  $\sigma$ quasi-central defined on
$\bG$ by $\sigma(x)=w\lexp t(x\inv)w\inv$ where $w=\begin{pmatrix}0&1\\-1&0
\end{pmatrix}$. We have $\Gso=D(\bG)$. Let
$t=\begin{pmatrix}i&0\\0&-i\end{pmatrix}$.   Then   $W^0(t\sigma)=1$,   and
$W(t\sigma)=1$    in    $D(\bG)$    but    $W(t\sigma)=W^\sigma=W$
in  $\bG$.  The  centralizer of $t\sigma$ in
$D(\bG)=\SL_2$    is   the   maximal torus $\bT'=\diag(a,a\inv)$, 
but   $\Gts$   is   generated   by   $\bT'$   and
$\begin{pmatrix}0&1\\1&0\end{pmatrix}$. Thus $t\sigma$ is quasi-isolated in
$\bG\cdot\sigma$ but not in $D(\bG)\cdot\sigma$. The ``explanation'' is that $Y_\sigma$ is generated
by $\alpha^\vee/2$ in the $\GL_2$ case, and by $\alpha^\vee$ in the
$\SL_2$ case, so applying Proposition \ref{W(t sigma)} yields different results
--- equivalently $[\bT,\sigma]$ is trivial in $\SL_2$ but is equal to the center in $\GL_2$,
so using the definition of $W(t\sigma)$ one gets different results.
\end{remark}
\begin{definition}
We define $\CA_{t\sigma}:=\{w\in W(t\sigma)\mid
w(\Delta_{t\sigma})=\Delta_{t\sigma}\}$.
\index{Ats@$\CA_{t\sigma}$}
\end{definition}
\begin{proposition}\label{W/W0}
$W(t\sigma)=W^0(t\sigma)\rtimes\CA_{t\sigma}$.
\end{proposition}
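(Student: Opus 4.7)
The plan is to identify $\CA_{t\sigma}$ with a complement to $W^0(t\sigma)$ inside $W(t\sigma)$ via the standard trick that a Weyl group acts simply transitively on the bases of its root system. The argument will break into three steps.

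First I would show that $W(t\sigma)$ stabilizes $\Phi_{t\sigma}$, whence $W^0(t\sigma)$ is normal in $W(t\sigma)$. Using Proposition \ref{W(t sigma)}, an element $w \in W(t\sigma)$ satisfies $w(\lambda)-\lambda \in Y_\sigma$, and hence also $w^{-1}(\lambda)-\lambda \in Y_\sigma$. For $\alpha \in \Phi_\sigma \subset X^\sigma$, the duality of Lemma \ref{XsYs} between $X^\sigma$ and $Y_\sigma$ ensures $\alpha(w^{-1}(\lambda)-\lambda)\in\BZ$, so the condition $\alpha(\lambda)\in\BZ$ characterizing membership in $\Phi_{t\sigma}$ (Proposition \ref{Phi_tsigma} together with Corollary \ref{root system of Gtso2}) is preserved under $w$. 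Hence $w(\Phi_{t\sigma})=\Phi_{t\sigma}$, and since $W^0(t\sigma)$ is generated by the reflections $s_\alpha$ with $\alpha\in\Phi_{t\sigma}$, it is normal in $W(t\sigma)$.

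Second I would construct the splitting. Given $w\in W(t\sigma)$, the set $w(\Delta_{t\sigma})$ is another basis of the root system $\Phi_{t\sigma}$, because $w$ permutes $\Phi_{t\sigma}$. Since the Weyl group $W^0(t\sigma)$ acts simply transitively on the bases of $\Phi_{t\sigma}$, there is a unique $w_0 \in W^0(t\sigma)$ with $w_0(\Delta_{t\sigma})=w(\Delta_{t\sigma})$; then $w_0^{-1}w$ fixes $\Delta_{t\sigma}$ setwise and still lies in $W(t\sigma)$, so it belongs to $\CA_{t\sigma}$. This gives the factorization $w=w_0\cdot(w_0^{-1}w)$, i.e.\ $W(t\sigma)=W^0(t\sigma)\cdot\CA_{t\sigma}$.

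Third I would check that $W^0(t\sigma)\cap\CA_{t\sigma}=\{1\}$: an element of $W^0(t\sigma)$ stabilizing the basis $\Delta_{t\sigma}$ must be the identity, by the standard fact that only the identity of a Coxeter group stabilizes a set of simple roots. Combined with the normality established in the first step, this yields the semidirect product $W(t\sigma)=W^0(t\sigma)\rtimes\CA_{t\sigma}$. The only delicate point — and the main obstacle — is the opening lemma that $W(t\sigma)$ really acts on $\Phi_{t\sigma}$; everything else is then formal from Weyl-group theory, but this stability is precisely what requires the integrality of the pairing between $X^\sigma$ and $Y_\sigma$ furnished by Lemma \ref{XsYs}.
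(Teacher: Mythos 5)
Your proposal is correct and follows essentially the same route as the paper's (much terser) proof: identify $\Delta_{t\sigma}$ as a basis of $\Phi_{t\sigma}$, use the simply transitive action of $W^0(t\sigma)$ on bases to split off the stabilizer $\CA_{t\sigma}$, and combine with normality of $W^0(t\sigma)$ in $W(t\sigma)$. The only cosmetic difference is that you establish normality by checking directly that $W(t\sigma)$ stabilizes $\Phi_{t\sigma}$ via the integrality of the $(X^\sigma,Y_\sigma)$ pairing, whereas the paper invokes that $W^0(t\sigma)$ is generated by the reflections contained in $W(t\sigma)$; both are valid.
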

\begin{proof}
By  Proposition \ref{Stsigma},  $\Delta_{t\sigma}$ is  a basis  of the root
system of $W^0(t\sigma)$. Since
two  such bases are  conjugate under $W^0(t\sigma)$, and $W^0(t\sigma)$ is the
(normal)  subgroup  generated  by  the  reflections  of  $W(t\sigma)$  (see
Proposition  \ref{Phi_tsigma} and Proposition \ref{W(t  sigma)}) we get the
semi-direct decomposition.
\end{proof}
\begin{proposition}\label{Y'_sigma embeds}
The group $\CA_{t\sigma}$ embeds in
$\CA_{R(\sigma)}:=(Y_\sigma\cap\BQ\Phi_\sigma^\vee)/Q(\Phi_\sigma^\vee)$.
\index{ARs@$\CA_{R(\sigma)}$}
\end{proposition}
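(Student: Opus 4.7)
The plan is to exhibit a group homomorphism $\phi\colon\CA_{t\sigma}\to\CA_{R(\sigma)}$ given by $w\mapsto w(\lambda)-\lambda$, where $\lambda\in Y_\sigma\otimes\BQ$ parameterizes the class of $t\sigma$ as in Theorem~\ref{parametrization}, and then show it is injective.

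First I would check that $\phi$ is well-defined with values in $Y_\sigma\cap\BQ\Phi_\sigma^\vee$. The containment $w(\lambda)-\lambda\in Y_\sigma$ comes directly from Proposition~\ref{W(t sigma)} (applied to $w\in\CA_{t\sigma}\subset W(t\sigma)$). The containment in $\BQ\Phi_\sigma^\vee$ follows because $W^\sigma$ is generated by the reflections $s_\alpha$, $\alpha\in\Phi_\sigma$, and each such reflection satisfies $s_\alpha(y)-y=-\alpha(y)\alpha^\vee\in\BQ\Phi_\sigma^\vee$; thus $(w-1)$ preserves $\BQ\Phi_\sigma^\vee$ and sends anything into it.

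Next, for the homomorphism property, given $w,w'\in\CA_{t\sigma}$ we have
\[
(ww')(\lambda)-\lambda=w(w'(\lambda)-\lambda)+(w(\lambda)-\lambda),
\]
so it suffices to show that $w(w'(\lambda)-\lambda)\equiv w'(\lambda)-\lambda\pmod{Q(\Phi_\sigma^\vee)}$. Since $w'(\lambda)-\lambda\in Y_\sigma$ by Proposition~\ref{W(t sigma)}, and since the pairing between $Y_\sigma$ and $X^\sigma$ (Lemma~\ref{XsYs}) gives $\alpha(y)\in\BZ$ for all $\alpha\in\Phi_\sigma\subset X^\sigma$ and $y\in Y_\sigma$, the reflection formula yields $s_\alpha(y)-y\in\BZ\alpha^\vee\subset Q(\Phi_\sigma^\vee)$. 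Hence $(w-1)$ maps $Y_\sigma$ into $Q(\Phi_\sigma^\vee)$, which is exactly what we need.

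The main obstacle is injectivity: suppose $w\in\CA_{t\sigma}$ satisfies $w(\lambda)-\lambda\in Q(\Phi_\sigma^\vee)$; the goal is $w=1$. Setting $q:=\lambda-w(\lambda)\in Q(\Phi_\sigma^\vee)$, the affine element $t_q w\in\Wa=Q(\Phi_\sigma^\vee)\rtimes W^\sigma$ fixes $\lambda$. By the classical description of isotropy in affine Weyl groups (\cite[Ch.~V \S3 no 3, Prop.~1]{Bou}, which was already invoked in the proof of Proposition~\ref{Stsigma}), the $\Wa$-stabilizer of $\lambda$ is generated by reflections in the affine hyperplanes $\alpha(x)=k$ passing through $\lambda$, whose linear parts are precisely the $s_\alpha$ with $\alpha(\lambda)\in\BZ$. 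By Proposition~\ref{Phi_tsigma}, these generate exactly $W^0(t\sigma)$; projecting $t_qw$ to $W^\sigma$ shows $w\in W^0(t\sigma)$. But $w$ also lies in $\CA_{t\sigma}$, and by the semidirect product decomposition $W(t\sigma)=W^0(t\sigma)\rtimes\CA_{t\sigma}$ of Proposition~\ref{W/W0}, the intersection $W^0(t\sigma)\cap\CA_{t\sigma}$ is trivial. Hence $w=1$, proving injectivity and completing the embedding.
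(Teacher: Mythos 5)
Your proposal is correct and follows essentially the same route as the paper: both use the map $w\mapsto w(\lambda)-\lambda \pmod{Q(\Phi_\sigma^\vee)}$, verify it is a homomorphism via the reflection formula and the integrality of the pairing between $\Phi_\sigma$ and $Y_\sigma$, and identify $\{w\mid w(\lambda)-\lambda\in Q(\Phi_\sigma^\vee)\}$ with $W^0(t\sigma)$ using the fact that this group is generated by the reflections it contains. The only cosmetic difference is that the paper defines the map on all of $W(t\sigma)$ and computes its kernel (citing the Bourbaki exercise directly rather than the affine stabilizer result), whereas you restrict to $\CA_{t\sigma}$ and conclude injectivity from $W^0(t\sigma)\cap\CA_{t\sigma}=\{1\}$.
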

\begin{proof} 
Consider the map $W(t\sigma)\to Y_\sigma/Q(\Phi_\sigma^\vee):
w\mapsto w(\lambda)-\lambda \pmod{Q(\Phi_\sigma^\vee)}$ where $t\sigma$
is parameterized by $\lambda$, see Theorem \ref{parametrization}. Its image lies in
$(Y_\sigma\cap\BQ\Phi_\sigma^\vee)/Q(\Phi_\sigma^\vee)$ since $w(\lambda)-\lambda$ is a 
$\BQ$-linear combination of coroots.
Since   $W^\sigma$  is  generated  by  the  reflections  with  respect  to
$\Phi_\sigma$  by Proposition \ref{Phi_sigma}, using the formula defining a
reflection we see that $W^\sigma$ acts trivially on $Y_\sigma/Q(\Phi_\sigma^\vee)$,
hence the above map is a group morphism.
Its kernel is
$\{w\in W^\sigma\mid w(\lambda)-\lambda\in Q(\Phi_\sigma^\vee)\}$ which is a 
reflection group by \cite[Ch.\ VI ex.\ 1 of \S 2]{Bou}, hence is 
generated by the reflections satisfying the same condition, thus 
is equal to $W^0(t\sigma)$ by Proposition \ref{Phi_tsigma}.
\end{proof}
Note that by Proposition \ref{aff}(5), $\CA_{R(\sigma)}$ can be identified to a subgroup
of $\Aut_{W^\sigma}(\tilde\Delta_\sigma)$.
\begin{remark} \label{R(sigma) simply connected}
In  view of Proposition  \ref{W/W0} and  Proposition \ref{Y'_sigma embeds},  if $\CA_{R(\sigma)}=1$
(which  can be stated as ``the derived  root datum of $R(\sigma)$ is simply
connected'')   then $W(t\sigma)=W^0(t\sigma)$  and
every   quasi-isolated   element   is  isolated.
\end{remark}
\begin{proposition} \label{car qisole}
Let $t\sigma$ be an element parameterized by 
$\lambda\in\CC$, then $t\sigma$ is quasi-isolated if and only if 
$\CA_{t\sigma}$ acts transitively on $\tilde S_{\sigma,i}-S_{t\sigma}$ 
for every $i$. 

For such an element, $\CA_{t\sigma}$ is the stabilizer of 
$\Delta_{t\sigma}$ in $\CA_{R(\sigma)}$.
\end{proposition}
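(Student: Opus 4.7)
The plan is to prove both assertions geometrically, by exploiting the affine action of $\CA_{t\sigma}$ on the face $F\subseteq\CC$ containing $\lambda$ in its relative interior.

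For the equivalence, Proposition \ref{quasi-isolated} says $t\sigma$ is quasi-isolated iff $W(t\sigma)$ sits in no proper parabolic of $W^\sigma$, equivalently (via the standard correspondence between parabolic subgroups of a finite Coxeter group and point stabilizers in its reflection representation $V:=Y_\sigma\otimes\BQ$) iff $V^{W(t\sigma)}=0$. Using $W(t\sigma)=W^0(t\sigma)\rtimes\CA_{t\sigma}$ from Proposition \ref{W/W0} gives $V^{W(t\sigma)}=(V^{W^0(t\sigma)})^{\CA_{t\sigma}}$; by Proposition \ref{Stsigma}, $V^{W^0(t\sigma)}$ splits over irreducible components, with $i$-th summand $V^{W^0(t\sigma)}_i$ equal to the $(|\Omega_i|-1)$-dimensional direction of the affine face $F_i\subseteq\CC_i$ with vertices $\{\varpi_s^\vee/n_s:s\in\Omega_i\}$, where $\Omega_i:=\tilde S_{\sigma,i}-S_{t\sigma}$ (with convention $\varpi^\vee_{s_{0,i}}:=0$).

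The next key point is that $\CA_{t\sigma}$ acts on $\CC$ fixing $\lambda$: for $w\in\CA_{t\sigma}$, the element $(\lambda-w(\lambda),w)\in\tW$ fixes $\lambda$, and its image in $\tW/\Wa\subseteq\Wa'/\Wa$ acts on $\CC$ as an affine automorphism fixing $\lambda$ (using Proposition \ref{aff}(5) and the embedding $\tW\subseteq\Wa'$ available when $\Gs$ is semisimple). Thus the embedding $\CA_{t\sigma}\hookrightarrow\CA_{R(\sigma)}$ of Proposition \ref{Y'_sigma embeds} realizes $\CA_{t\sigma}$ as the subgroup of $\CA_{R(\sigma)}$ fixing $\lambda\in\CC$. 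Translating the origin to $\lambda$, the induced linear action of $\CA_{t\sigma}$ on $V^{W^0(t\sigma)}_i$ is the permutation representation on the vertices of $F_i$ restricted to the zero-sum hyperplane, so $(V^{W^0(t\sigma)}_i)^{\CA_{t\sigma}}=0$ iff $\CA_{t\sigma}$ acts transitively on $\Omega_i$, completing the equivalence.

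For the stabilizer identity, the inclusion $\CA_{t\sigma}\subseteq\mathrm{Stab}_{\CA_{R(\sigma)}}(\Delta_{t\sigma})$ is immediate from the definition. For the converse, if $\omega\in\CA_{R(\sigma)}$ stabilizes $\Delta_{t\sigma}$, then $\omega$ permutes $S_{t\sigma}$ and hence the walls of $\CC$ through $\lambda$, so $\omega$ stabilizes $F$ setwise and acts on each $F_i$ as an affine permutation of its vertices. Here the quasi-isolated hypothesis enters: by the equivalence just proven, $\CA_{t\sigma}$ acts transitively on each $\Omega_i$, and since $\CA_{t\sigma}$ fixes $\lambda$, the barycentric coordinates of $\lambda$ in $F_i$ are $\CA_{t\sigma}$-invariant and hence constant, so $\lambda$ is the centroid of each $F_i$. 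Any affine permutation of the vertices of a simplex fixes its centroid, so $\omega(\lambda)=\lambda$, whence $\omega\in\CA_{t\sigma}$. The main obstacle is the identification of the algebraic embedding $\CA_{t\sigma}\hookrightarrow\CA_{R(\sigma)}$ with the geometric action on $\CC$; once that is in place, both the transitive-permutation-representation criterion and the centroid observation finish the argument cleanly.
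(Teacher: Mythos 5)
Your proof is correct, but it takes a genuinely different route from the paper's. The paper proves this proposition by building a dictionary between its setting $(W^\sigma,\Phi_\sigma,Y_\sigma\cap\BQ\Phi_\sigma^\vee,\Wa,\CC,\lambda)$ and Bonnaf\'e's, and then simply citing \cite[Corollary 4.3(b)]{cedric} for the equivalence and \cite[3.14]{cedric} (via the affine coordinates $\lambda_\alpha$) for the stabilizer identity. You instead reprove those facts directly: you translate ``quasi-isolated'' into $V^{W(t\sigma)}=0$ via the point-stabilizer description of parabolics, use $W(t\sigma)=W^0(t\sigma)\rtimes\CA_{t\sigma}$ to reduce to computing $\CA_{t\sigma}$-invariants in $V^{W^0(t\sigma)}$, identify that space with the direction of the face $F=\prod_iF_i$ of $\CC$ containing $\lambda$ in its relative interior, and then read off transitivity from the reduced permutation representation on the vertices; the stabilizer identity follows from your observation that $\CA_{t\sigma}$ is exactly the subgroup of $\CA_{R(\sigma)}$ fixing $\lambda$, combined with the centroid argument. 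This is all sound. What your approach buys is a self-contained, geometrically transparent argument that makes visible \emph{why} transitivity is the right condition (it kills the fixed vectors in the face direction) and why the stabilizer of $\Delta_{t\sigma}$ equals the stabilizer of $\lambda$ precisely in the quasi-isolated case ($\lambda$ becomes the centroid); what the paper's approach buys is brevity and reuse of \cite{cedric}, at the cost of a dictionary the reader must verify. The one step you rightly flag as the crux --- matching the algebraic embedding of Proposition \ref{Y'_sigma embeds} with the geometric action on $\CC$ --- deserves one more line: the canonical representative $\gamma_w\in\mathrm{Stab}_{\Wa'}(\CC)$ of the class of $(\lambda-w(\lambda),w)$ has linear part $v^{-1}w$ with $v$ in the linear parts of $\mathrm{Stab}_{\Wa}(\lambda)$, i.e.\ $v\in W^0(t\sigma)$ by Proposition \ref{Stsigma}; since $W^0(t\sigma)$ acts trivially on $V^{W^0(t\sigma)}$ (and in fact $v=1$, as $v$ must stabilize the basis $\Delta_{t\sigma}$), the vertex permutation of $F_i$ really does realize the action of $w$ on the direction space, which is exactly what your ``translating the origin to $\lambda$'' step needs. (Also note the harmless sign: your cocycle $w\mapsto\lambda-w(\lambda)$ is the negative of the homomorphism in Proposition \ref{Y'_sigma embeds}; the image subgroup, orbits and stabilizers are unaffected.)
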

\begin{proof}
We show that we can  apply \cite[Corollary 4.3(b)]{cedric}. \cite{cedric}
considers the Weyl group $W$ of a root system $\Phi$ acting on a lattice
$Y(\bT)$ and on the space $V=Y(\bT)\otimes\BQ$, the affine group 
$W_{\text{aff}}=W\rtimes\BZ\Phi$, the fundamental domain $\CC$ of
$W_{\text{aff}}$
on $V$ and an element $\lambda\in\CC$. We can specialize these objects 
to correspond to our setting as follow:
$$\begin{array}{ccccccc}
W&\Phi&Y(\bT)&V&W_{\text{aff}}&\CC&\lambda\\
W^\sigma&\Phi_\sigma&Y_\sigma\cap\BQ\Phi_\sigma^\vee&\BQ\Phi_\sigma^\vee&\Wa&\CC&\lambda\\
\end{array} $$
Then, comparing our  Proposition \ref{W(t sigma)},
Proposition \ref{Phi_tsigma} to the definitions
\cite[below Lemma 3.1]{cedric}, we can extend this dictionary as follows 
$$\begin{array}{cccc}
W_\bG(\lambda)&W^0(\lambda)&\Phi(\lambda)\\
W(t\sigma)&W^0(t\sigma)&\Phi_{t\sigma}\\
\end{array}$$
Then comparing our Proposition \ref{Stsigma} to \cite[proof of 3.5]{cedric},
our Proposition \ref{Y'_sigma embeds} to \cite[3.C]{cedric}, and our Proposition
\ref{Stsigma} to \cite[Proposition 3.14]{cedric} we have the correspondences
$$\begin{array}{ccc}
\tilde\Delta&\CA_\bG&I_\lambda\\
\tilde\Delta_\sigma&\CA_{R(\sigma)}&\Delta_{t\sigma}\\
\end{array}$$
Now by \cite[below Lemma 3.1]{cedric} joint with the choice made
in \cite[last line before section 4]{cedric} $A_\bG(\lambda)$ of {\it loc.~cit.}
is our $\CA_{t\sigma}$.

We could have equivalently given the first sentence of Proposition \ref{car qisole}
in terms of the action of $\CA_{t\sigma}$ on $\Delta_{t\sigma}$,
the condition for a quasi-semisimple element to be quasi-isolated given
in Proposition \ref{quasi-isolated} becoming that given below Lemma
3.2 in \cite{cedric}, we can use \cite[Corollary 4.3 (b)]{cedric} which gives
the first sentence of Proposition \ref{car qisole}.

To  prove the last sentence of Proposition \ref{car qisole}, following \cite{cedric} we
introduce  affine  coordinates $\{\lambda_\alpha\}_{\alpha\in\tilde\Delta}$
for $\lambda$ as in \cite[below (3.9)]{cedric}. Then by
\cite[3.14(b)]{cedric}   we  have  $$\CA_{t\sigma}=\{a\in\CA_{R(\sigma)}\mid  \forall
\alpha\in\tilde\Delta,      \lambda_{a(\alpha)}=\lambda_\alpha\}.\eqno(*)$$
Since  by  the  first  sentence  of Proposition  \ref{car  qisole} $\CA_{t\sigma}$ acts
transitively    on   $\tilde\Delta-\Delta_{t\sigma}$,   it   follows   that
$\lambda_\alpha$  is constant  (and non-zero  by \cite[3.14(a)]{cedric}) on
this   set.   Since   $\lambda_\alpha$   is   constant   equal  to  $0$  on
$\Delta_{t\sigma}$   (by  \cite[3.14(a)]{cedric}),  it   is  necessary  and
sufficient to stabilize $\Delta_{t\sigma}$ in order to satisfy the condition
on  the  right-hand  side  of $(*)$,  whence the second sentence of Proposition \ref{car
qisole}.
\end{proof}
\begin{proposition} \label{param quasi-isolated} Assume
$\bG^\sigma$ semisimple, so that every conjugacy class of \qss\ elements in
$\bG\cdot\sigma$ has a representative parameterized by $\lambda\in\CC$. Then:

If $\Omega$ is a subset of
$(\tilde S_\sigma)_{p'}:=\{s\in\tilde S_\sigma\mid \frac 1{n_s}
\varpi^\vee_s\in \BZ_{(p)}\Phi_\sigma^\vee\}$
such that the stabilizer in $\CA_{R(\sigma)}$ of
$\Omega\cap\tilde S_{\sigma,i}$  acts transitively on it for each $i$,
the element $\sum_i\dfrac{\sum_{s\in\Omega\cap \tilde S_i}\varpi_s^\vee}
{n_i(\Omega)|\Omega\cap\Delta_{\sigma,i}|}$, where $n_i$ is the common value
of $n_s$ for every $s\in\Omega\cap \tilde S_i$, is the unique point
of $\CC$ such that for a corresponding $t\sigma$ we have
$S_{t\sigma}=\cup_i (\tilde S_{\sigma,i}-\Omega)$ and $\CA_{t\sigma}$
acts transitively on  $\tilde S_{\sigma,i}-S_{t\sigma}$.

The $\bG$-conjugacy classes of quasi-isolated elements
of finite order of $\bG\cdot\sigma$ are
parameterized by the $\tW/\Wa$-orbits of 
subsets $\Omega$ of $(\tilde S_\sigma)_{p'}$ such that 
the stabilizer in $\CA_{R(\sigma)}$ of $\Omega\cap\tilde S_{\sigma,i}$
acts transitively on it for each $i$.
\end{proposition}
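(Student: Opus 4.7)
The plan is to combine Proposition \ref{car qisole}, which characterizes quasi-isolation by transitivity of $\CA_{t\sigma}$ on each $\tilde S_{\sigma,i}-S_{t\sigma}$, with the parameterization of \qss\ classes from Theorem \ref{parametrization} and Corollary \ref{root system of Gtso2}. Since $\bG^\sigma$ is semisimple, Proposition \ref{R(sigma)} gives that $R(\sigma)$ is semisimple, so by the remark after Proposition \ref{aff} we have $\tW\subset\Wa'$, and $\tW/\Wa$ acts on the fundamental alcove closure $\CC$ by automorphisms. Every \qss\ class of finite order prime to $p$ in $\bG\cdot\sigma$ thus admits a representative parameterized by a $\tW/\Wa$-orbit of points $\lambda\in\CC$, and the proposition amounts to translating the condition of quasi-isolation on such $\lambda$ into the combinatorial condition on $\Omega$.

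For the forward direction, I would start with a quasi-isolated $t\sigma$ parameterized by $\lambda\in\CC$ and set $\Omega:=\tilde S_\sigma-S_{t\sigma}$. Using the simplicial coordinates of Proposition \ref{aff}(2), write $\lambda=\sum_{i\in I}\sum_{s\in\tilde S_{\sigma,i}}\lambda_s\,\varpi_s^\vee/n_s$ with the conventions $\varpi_{s_{0,i}}^\vee=0$, $n_{s_{0,i}}=1$ and $\sum_{s\in\tilde S_{\sigma,i}}\lambda_s=1$ in each component. The walls of $\CC$ containing $\lambda$ are those indexed by $s$ with $\lambda_s=0$, so $\Omega$ is exactly the support $\{s\mid\lambda_s>0\}$. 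By Proposition \ref{car qisole}, $\CA_{t\sigma}$ acts transitively on each $\Omega\cap\tilde S_{\sigma,i}$, and by formula $(*)$ in the proof of that proposition the coordinates $\lambda_s$ are constant on $\CA_{t\sigma}$-orbits. Since $\CA_{R(\sigma)}\subset\Aut_{W^\sigma}(\tilde\Delta_\sigma)$ acts by automorphisms of the extended Dynkin diagram it preserves the marks $n_s$; combined with the simplex constraint, this forces a common value $n_i(\Omega)$ of $n_s$ on $\Omega\cap\tilde S_{\sigma,i}$ and $\lambda_s=1/|\Omega\cap\tilde S_{\sigma,i}|$ there, producing the claimed explicit formula for $\lambda$ and its uniqueness.

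Conversely, for any $\Omega\subset(\tilde S_\sigma)_{p'}$ satisfying the transitivity hypothesis on its stabilizer in $\CA_{R(\sigma)}$, I would define $\lambda$ by the formula, check $\lambda\in\CC\cap(Y_\sigma\otimes\BQ_{p'})$ (this is where the $(\tilde S_\sigma)_{p'}$-condition enters, ensuring the averaged vector has $p'$-denominator in $\BZ_{(p)}\Phi_\sigma^\vee\subset Y_\sigma$), and read $S_{t\sigma}=\tilde S_\sigma-\Omega$ off from the support of $\lambda$. The last sentence of Proposition \ref{car qisole} then identifies $\CA_{t\sigma}$ with the stabilizer in $\CA_{R(\sigma)}$ of $\Delta_{t\sigma}$, which is precisely the stabilizer of $\Omega$ and acts transitively on each $\Omega\cap\tilde S_{\sigma,i}$ by hypothesis, so $t\sigma$ is quasi-isolated by the first sentence of that proposition. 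The enumeration statement then follows because distinct $\Omega$ yield distinct $\lambda\in\CC$, and the action of $\tW/\Wa$ on $\CC$ through $\Wa'/\Wa$ (Proposition \ref{aff}(5)) permutes the walls of $\CC$, hence the set $\tilde S_\sigma$, inducing the asserted action on admissible $\Omega$'s. The main obstacle will be the careful transfer of the transitivity condition between the class-dependent group $\CA_{t\sigma}$ and the external group $\CA_{R(\sigma)}$, together with verifying that the $(\tilde S_\sigma)_{p'}$-condition on $\Omega$ is exactly what makes the prescribed average of fundamental coweights lie in $Y_\sigma\otimes\BQ_{p'}$.
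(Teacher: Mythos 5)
Your proposal takes a genuinely different route from the paper: the paper's entire proof consists of invoking the dictionary built in the proof of Proposition \ref{car qisole} and declaring the first assertion to be the translation of \cite[4.B]{cedric}, with the second assertion following from the first and Proposition \ref{car qisole}; you instead reconstruct the combinatorial argument directly. Where you carry it out, the reconstruction is sound and is essentially Bonnaf\'e's own argument transported through the dictionary: identifying $\Omega$ with the support of the barycentric coordinates of $\lambda$, using formula $(*)$ from the proof of Proposition \ref{car qisole} to see that the affine coordinates are constant on $\CA_{t\sigma}$-orbits, noting that automorphisms of the extended diagram preserve the marks $n_s$ so that the barycentric coordinates $\lambda_s=n_s\,\alpha_s(\lambda)$ are also constant on orbits, and then extracting $\lambda_s=1/|\Omega\cap\tilde S_{\sigma,i}|$ from the simplex constraint. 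What your approach buys is self-containedness; what the paper's buys is brevity at the price of sending the reader to \cite{cedric}. Two caveats. First, your derivation produces the denominator $|\Omega\cap\tilde S_{\sigma,i}|$, not the $|\Omega\cap\Delta_{\sigma,i}|$ printed in the statement; these differ exactly when $s_{0,i}\in\Omega$, and your version is the one consistent with the convention $\varpi_{s_{0,i}}^\vee=0$, so you should flag the discrepancy rather than claim to have ``produced the claimed formula''.

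Second, and more seriously, the one substantive step you assert without proof is the $p'$-integrality: that for an admissible $\Omega\subset(\tilde S_\sigma)_{p'}$ the point $\sum_i\frac{1}{n_i(\Omega)\,|\Omega\cap\tilde S_{\sigma,i}|}\sum_{s\in\Omega\cap\tilde S_{\sigma,i}}\varpi_s^\vee$ actually lies in $Y_\sigma\otimes\BQ_{p'}$, and conversely that a quasi-isolated class of finite order forces $\Omega\subset(\tilde S_\sigma)_{p'}$. This is not automatic from $\frac1{n_s}\varpi_s^\vee\in\BZ_{(p)}\Phi_\sigma^\vee$, because the extra denominator $|\Omega\cap\tilde S_{\sigma,i}|$ --- the size of an orbit of a subgroup of $\CA_{R(\sigma)}$ --- could a priori be divisible by $p$; one must either rule that out or show that the orbit sum is nevertheless $p$-integral, and this is precisely the content of the lemmas in \cite[\S 4]{cedric} that the paper's citation absorbs. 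You correctly identify this as the main obstacle, but identifying it is not the same as resolving it, so as written your argument has this one genuine hole.
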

\begin{proof}
Given the dictionary between our setting and \cite{cedric} in the proof of
Proposition \ref{car qisole}, the first
assertion of the proposition is a translation to our setting of 
\cite[4.B]{cedric}. 

The second assertion of the proposition is a consequence of the first one
and of Proposition \ref{car qisole}.
\end{proof}

\section{Classification of quasi-isolated elements}
The previous propositions make up together
a method to classify quasi-isolated elements and their centralizers,
which can be described as follows:
\begin{itemize}
\item Describe the root datum $R(\sigma)$.
\item Describe the quasi-isolated classes associated to this root datum using
\cite{cedric}; by Proposition \ref{param quasi-isolated} these classes lift to
quasi-isolated classes in $\bG\cdot\sigma$.
\item Determine $\Ts/\Tso$ using Lemma \ref{A(Ts)}.
\item Determine the type of the root datum of $\Gtso$ either by Proposition \ref{cas
facile} or an explicit computation when there are special orbits.
\item Determine the isogeny type of $\Gtso$ by computing its center
explicitly.
\item Determine $W(t\sigma)$ using that it can be computed in the datum
$R(\sigma)$, see Proposition \ref{W(t sigma)}.
\end{itemize}
We follow this method in the next subsections.
\subsection*{Quasi-isolated classes in type $A$, $n_\sigma=2$} 
We  give  the  classification  of  quasi-isolated  and  isolated
classes   of   finite   order   in   $\GL_r.\sigma$,   $\SL_r.\sigma$   and
$\PGL_r.\sigma$, where $\sigma$ is the quasi-central automorphism defined by
$\sigma(g)= J\lexp t(g\inv)J\inv$ where 
$J:=\antidiag(\underbrace{1,\ldots,1}_{\lfloor\frac r 2\rfloor},
\underbrace{-1,\ldots,-1}_{\lfloor\frac {r+1} 2\rfloor})$;
here $\antidiag(x_1,\ldots,x_r)$ denotes the antidiagonal matrix with 
coefficient $x_i$ in the $i$-th column.
Note that for $r=2$ the automorphism $\sigma$ acts trivially on
$\SL_2$ and $\PGL_2$, while still defining on $\GL_2$ a conjugacy class of
non-inner quasi-central automorphisms.

\begin{proposition}
If  $p=2$  the class of $\sigma$ is the only
quasi-isolated class in $\GL_r.\sigma$, $\SL_r.\sigma$ or $\PGL_r.\sigma$.
In  these groups the points over $k$ of the 
centralizer of $\sigma$  is $\Sp_r(k)$ if $r$ is even and
$\Sp_{r-1}(k)$ if $r$ is odd.
\end{proposition}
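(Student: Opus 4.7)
The plan is to apply Proposition \ref{param quasi-isolated} and show that in characteristic $2$ the set $(\tilde S_\sigma)_{2'}$ reduces to the single affine node $\{s_0\}$, forcing $\lambda=0$ and hence $t\sigma=\sigma$. First I reduce $\GL_r$ to $\PGL_r$: since $\sigma$ inverts $Z^0(\GL_r)$, the map $z\mapsto[z,\sigma]=z^2$ is surjective on $Z^0(\GL_r)$, so $Z^0(\GL_r)=[Z^0(\GL_r),\sigma]\subseteq[\bT,\sigma]$, and Proposition \ref{Zso=1} gives $\Zso=1$ together with a bijection between \qss\ classes in $\GL_r\cdot\sigma$ and in $\PGL_r\cdot\sigma$ preserving $W(\sigma)$, hence --- by Proposition \ref{quasi-isolated} --- preserving quasi-isolation. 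Thus it suffices to treat $\bG\in\{\SL_r,\PGL_r\}$. By Proposition \ref{R(sigma)}, $R(\sigma)$ is of type $C_m$ simply connected when $r=2m+1$, and of type $B_m$ when $r=2m$ --- adjoint for $\PGL_{2m}$, simply connected for $\SL_{2m}$. In every case $R(\sigma)$ is irreducible.

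The heart of the argument is the calculation $(\tilde S_\sigma)_{2'}=\{s_0\}$, and the main obstacle is the uniform bookkeeping of denominators. The key simplification is that $\BZ_{(2)}\Phi_\sigma^\vee=\BZ_{(2)}Q(\Phi_\sigma^\vee)$ depends only on $R(\sigma)$ through the coroot lattice, hence is independent of the isogeny type. Working in Bourbaki coordinates the fundamental coweights of $B_m$ are $\varpi_i^\vee=e_1+\cdots+e_i$, and $Q(\Phi_\sigma^\vee)$ is the $D_m$-lattice of vectors with even coordinate sum; the fundamental coweights of $C_m$ are $\varpi_i^\vee=e_1+\cdots+e_i$ for $i<m$ together with $\varpi_m^\vee=\tfrac12(e_1+\cdots+e_m)$, and $Q(\Phi_\sigma^\vee)=\BZ^m$. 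The highest roots carry coefficients $(1,2,\ldots,2)$ in $B_m$ and $(2,\ldots,2,1)$ in $C_m$, so for each $s\neq s_0$ inspection of $\tfrac1{n_s}\varpi_s^\vee$ shows it either carries a coefficient $\tfrac12$ --- hence lies outside $\BZ_{(2)}^m$ --- or, when integral, has odd coordinate sum and so lies outside the $D_m$-lattice; in either case $\tfrac1{n_s}\varpi_s^\vee\notin\BZ_{(2)}\Phi_\sigma^\vee$.

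Consequently the only admissible subset in Proposition \ref{param quasi-isolated} is $\Omega=\{s_0\}$, producing $\lambda=0$ and $t=1$, so the unique quasi-isolated class is that of $\sigma$. For the centralizer: Proposition \ref{type of Gso}(5) applied with $r=2m+1$ (type $A_{2m}$) gives $\Gs=\Gso$ simply connected of type $C_m$, namely $\Sp_{2m}(k)=\Sp_{r-1}(k)$. For $r=2m$ (type $A_{2m-1}$), Proposition \ref{type of Gso}(6) gives $\Gso$ of type $C_m$, and the exceptional $\pm1$-extension only arises when $\car k\neq 2$, so $\Gs=\Gso$ here. Depending on the isogeny type of $\bG$ this is $\Sp_{2m}$ or $\PSp_{2m}$, but in characteristic $2$ the center $\mu_2$ of $\Sp_{2m}$ has trivial $k$-points, giving $\Sp_{2m}(k)=\PSp_{2m}(k)=\Sp_r(k)$ uniformly.
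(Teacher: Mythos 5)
Your proof is correct and follows essentially the same route as the paper: identify $R(\sigma)$ as type $B_{\lfloor r/2\rfloor}$ or $C_{\lfloor r/2\rfloor}$ via Proposition \ref{R(sigma)}, show $(\tilde S_\sigma)_{p'}=\{s_0\}$ when $p=2$ so that Proposition \ref{param quasi-isolated} forces $\Omega=\{s_0\}$ and $\lambda=0$, then read off the centralizer from Proposition \ref{type of Gso}(5) and (6) together with the characteristic-$2$ coincidence of $k$-points for the isogeny types of $C_m$. The only differences are that you spell out the coweight-lattice computation that the paper merely asserts, and add the (harmless, correct) reduction of $\GL_r$ to $\PGL_r$ via Proposition \ref{Zso=1}.
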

\begin{proof}
The  root  datum  $R(\sigma)$  is  of  type $B_{\lfloor\frac r2\rfloor}$ or
$C_{\lfloor\frac  r2\rfloor}$. For  these types  if $p=2$  one has $(\tilde
S_\sigma)_{p'}=\{\varpi^\vee_{s_0}\}$,   whence  the   first  assertion  by
Proposition  \ref{param  quasi-isolated}.  The  second assertion comes from
Proposition  \ref{type of Gso}(5)  and (6), using  that in characteristic 2
all semisimple groups of type $C_{\lfloor\frac r2\rfloor}$ have same set of
points over a field,
and  that  in  characteristic  2,  in  type $A_{2n}$, $\sigma$ is always of
symplectic type.
\end{proof}

\begin{proposition}
Assume $p\neq 2$; let $i$ be a primitive fourth root of unity,
and for $j=0,\ldots,\lfloor\frac r2\rfloor$ let
$t(j):=\diag(\underbrace{i,\ldots,i}_j,1,\ldots,1,\underbrace{-i,\ldots,-i}_j)
\in\SL_r$.
Then:
\begin{enumerate}
\item  The elements $t(j).\sigma$ with
$j=0,\ldots,\lfloor\frac    r2\rfloor$    are    representatives   of   the
quasi-isolated conjugacy   classes  in
$\GL_r.\sigma$,  $\PGL_r.\sigma$ for all $r$ and in $\SL_r.\sigma$ for odd
$r$.  They are all isolated,
except in $\GL_r$ and $\PGL_r$ when $r$ is even and $j=1$.

For odd $r$ and $\bG=\SL_r$  or  $\PGL_r$ (resp. $\bG=\GL_r$)
we have $C_\bG(t(j).\sigma)=\SO_{2j+1}\times\Sp_{r-1-2j}$
(resp.~$C_\bG(t(j).\sigma)=\Orth_{2j+1}\times\Sp_{r-1-2j}$).

For even $r$ and $\bG=\GL_r$ (resp.~$\bG=\PGL_r$)
we have $C_\bG(t(j).\sigma)=\Orth_{2j}\times\Sp_{r-2j}$
(resp.~$(\Orth_{2j}\times\Sp_{r-2j})/\pm1$).
\item  The  elements
$t(j).\sigma$ with  $j=0$ or $j=2,\ldots, r$  together with the  element
$-\sigma$
are   representatives   of   the quasi-isolated  conjugacy  classes
in $\SL_{2r}.\sigma$. They are all isolated.

The  centralizer  of  $t(j).\sigma$ in $\SL_{2r}$
is isomorphic to $\SO_{2j}\times\Sp_{2r-2j}$. 
\end{enumerate}
In the above $\SO_0$, $\Orth_0$ and
$\SO_1$ are trivial,  $\Orth_1=\pm1$, $\SO_2$ is a rank 1 torus
and $\Orth_2$ is its extension by the
automorphism $t\mapsto t\inv$ of the torus.
\end{proposition}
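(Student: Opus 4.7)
The plan is to follow the method summarized at the start of Section 4, specialized to $\bG$ of type $A_{r-1}$ with $\sigma$ the explicit involution described. By Proposition \ref{type of Gso}(5)--(6) and Proposition \ref{R(sigma)}(4)--(5), writing $m$ for the rank of $R(\sigma)$: we have $R(\sigma)$ of type $C_m$ simply connected when the ambient dimension is $2m+1$ (in which case $\SL=\PGL$), and of type $B_m$ when the dimension is $2m$, simply connected for $\SL$ and adjoint for $\PGL$. For $\GL_r$ the central one-dimensional torus is $\sigma$-stable and can be quotiented out without changing $\Gtso$ (the first reduction step used in the proof of Theorem \ref{A(ZGtso)}), so the combinatorics of quasi-isolated classes in $\GL_r\cdot\sigma$ matches that for $\PGL_r\cdot\sigma$; only the component group $\Gts/\Gtso$, governed by $\Ts/\Tso$ via Lemma \ref{A(Ts)}(1) and Proposition \ref{exactsequence}, differs, and accounts for the passage from $\SO$ to $\Orth$.

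Next I would apply Proposition \ref{param quasi-isolated} (i.e., the results of \cite{cedric}) to the extended Dynkin diagrams of $C_m$ and $B_m$; since $p\neq 2$, every label $n_s\in\{1,2\}$ is prime to $p$. In type $C_m$ simply connected, $\CA_{R(\sigma)}$ is trivial and the $m+1$ singleton subsets give precisely the isolated classes indexed by $j=0,\ldots,m$. In type $B_m$ simply connected (the $\SL_{2m}$ case), $\CA_{R(\sigma)}$ is again trivial and all $m+1$ singletons give isolated classes, $m$ of them corresponding to the $t(j)$ for $j\in\{0,2,3,\ldots,m\}$ and the extra one pulling back, via Theorem \ref{parametrization} and Lemma \ref{X(T/LT)}, to the central element $-1\in Z(\SL_{2m})$ --- this is $-\sigma$. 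In type $B_m$ adjoint ($\GL_{2m}$, $\PGL_{2m}$), $\CA_{R(\sigma)}\cong\BZ/2\BZ$ exchanges the two end nodes $\alpha_0$ and $\alpha_1$ of the affine diagram, so one obtains $m-1$ fixed-singleton isolated classes and one pair-orbit isolated class, plus the single non-singleton $\Omega=\{\alpha_0,\alpha_1\}$ producing the unique non-isolated quasi-isolated class, corresponding to $j=1$. Pulling back each extremal vertex $\frac{1}{n_s}\varpi_s^\vee\in\CC$ to $\Tso$ via the explicit cocharacter description of the standard diagonal torus produces $\diag(i,\ldots,i,1,\ldots,1,-i,\ldots,-i)$ with $j$ copies of $\pm i$ at each end, that is, $t(j)$.

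Finally, the centralizers are read off from Corollary \ref{root system of Gtso2}: partitioning the simple roots $\alpha_i=\epsilon_i-\epsilon_{i+1}$ of $A_{r-1}$ according to which block of $t(j)$ their two endpoints lie in shows that $\Sigma_{t(j)\sigma}$ decomposes as a type-$B_j$ (or $D_j$, in even dimension) factor coming from the two $\pm i$-blocks together with the roots crossing them, and a type-$C$ factor coming from roots internal to the middle $1$-block; on the group level this yields $\SO_{2j+1}\times\Sp_{r-1-2j}$ for odd $r$ and $\SO_{2j}\times\Sp_{r-2j}$ for even $r$. The component group $\Gts/\Gtso$ is then obtained from Proposition \ref{exactsequence}: in $\GL_r$ the extra central $\pm 1\in\Ts$ promotes $\SO$ to $\Orth$; in $\PGL_r$ with $r$ even, this $\pm 1$ is identified with a central element of $\Orth_{2j}\times\Sp_{r-2j}$, giving the stated $/\pm 1$ quotient. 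The main obstacle, I expect, is not the abstract classification of the $\Omega$'s (which is handed to us by \cite{cedric}) but the bookkeeping of isogeny data across the three versions $\GL/\SL/\PGL$ of the same derived group: distinguishing $\SO$, $\Orth$ and $\Orth/\pm 1$ through the component-group computation, and identifying the $-\sigma$ class in $\SL_{2m}$ as coming from the specific extra simply-connected singleton of the $B_m$ affine diagram.
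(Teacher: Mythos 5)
Your classification of the representatives follows the paper's own route almost exactly: reduce $\GL_r$ to $\PGL_r$ (the paper invokes Proposition \ref{Zso=1}), read off the type and isogeny class of $R(\sigma)$ from Proposition \ref{R(sigma)}, apply Bonnaf\'e's classification (\cite{cedric}) to the affine diagrams of $C_m$ and $B_m$ via Proposition \ref{param quasi-isolated}, and lift the resulting points of $\CC$ back to the diagonal torus. Your accounting of the $\Omega$'s is correct, including the pair-orbit $\{\{\alpha_0\},\{\alpha_1\}\}$ collapsing to the class of $\sigma$ and $\Omega=\{\alpha_0,\alpha_1\}$ giving the unique non-isolated class $j=1$ in the adjoint $B_m$ case. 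The divergence, and the gap, is in the centralizers. The paper does not argue through Corollary \ref{root system of Gtso2} here: it identifies $C_\bG(t(j)\sigma)$ directly as the isometry group of the bilinear form with matrix $t(j)J$, which is an orthogonal sum of a symmetric form on $2j$ (resp.\ $2j+1$) coordinates and an alternating form on the rest; that yields $\Orth\times\Sp$ with the correct isogeny type and the correct component group in one stroke. Your route --- compute $\Sigma_{t(j)\sigma}$, then add the component group --- is legitimate in principle but as sketched has two holes. First, knowing that $\Sigma_{t(j)\sigma}$ has type $B_j\times C_\bullet$ or $D_j\times C_\bullet$ does not yet decide whether $\Gtso$ is $\SO\times\Sp$ rather than, say, $\Spin\times\PSp$ or a quotient by a diagonal central subgroup; this is precisely the step ``determine the isogeny type of $\Gtso$ by computing its center explicitly'' in the method of Section 4, and you assert the answer rather than derive it.

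Second, and concretely wrong: your mechanism for the disconnectedness fails for even $r$. For $\bG=\GL_{2m}$ (and $\PGL_{2m}$) the automorphism $\sigma$ permutes a basis of $X$ (namely $e_1,\dots,e_m,-e_{m+1},\dots,-e_{2m}$, resp.\ the simple roots), so Lemma \ref{A(Ts)}(2) gives $\Ts=\Tso$; there is no ``extra central $\pm1\in\Ts$'' available to promote $\SO_{2j}$ to $\Orth_{2j}$. In the exact sequence of Proposition \ref{exactsequence} the component group $\Gts/\Gtso\simeq\BZ/2\BZ$ then comes entirely from the other factor $W(t\sigma)/W^0(t\sigma)=\CA_{t\sigma}$, which is nontrivial exactly because $R(\sigma)$ is adjoint of type $B_m$ in the even case and the generator of $\CA_{R(\sigma)}$ stabilizes $\Delta_{t(j)\sigma}$, inducing the outer automorphism of the $D_j$ factor --- i.e.\ the reflection generating $\Orth_{2j}/\SO_{2j}$. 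Your $\Ts/\Tso$ mechanism is correct only for odd $r$, where $\Ts/\Tso\simeq\BZ/2\BZ$ in $\GL_{2m+1}$ while $\CA_{R(\sigma)}=1$. Relatedly, the $/\pm1$ for $\PGL_{2m}$ is not the identification of a $\Ts$-coset element with a central element: it is simply the image of $\Orth_{2j}\times\Sp_{2m-2j}$ under $\GL_{2m}\to\PGL_{2m}$, a quotient by the central $(-\Id,-\Id)$, not an extension.
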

\begin{proof}
By Proposition \ref{Zso=1} we already know  that  the  quasi-isolated classes 
in $\GL_r.\sigma$  are in bijection with those of $\PGL_r.\sigma$.  However
the groups $\Ts/\Tso$ are different as we shall see below.

We consider first
the groups of type $A_{2r}$, that is $\GL_{2r+1}.\sigma$,
$\PGL_{2r+1}.\sigma$ and $\SL_{2r+1}.\sigma$ with $\sigma$ quasi-central of
symplectic  type. By Proposition \ref{R(sigma)}  the root datum $R(\sigma)$
is  of type $C_r$  simply connected. We label its simple roots as
$\nnode{\alpha_1}\cdots\nnode{\alpha_2}\sbar\nnode{\alpha_{r-1}}
{\rlap{$<$}\dbar}\nnode{\alpha_r}$.
Using  \cite[Proposition 4.9]{cedric}, we
get   that  there   are  $r+1$  elements of $Y_\sigma\otimes\BQ$
parameterizing quasi-isolated classes (all isolated) which are $0$ and 
$\varpi_j^\vee/n_j$
where  $\varpi_j^\vee$ runs over the  fundamental coweights of $R(\sigma)$.
For $1\leq j \leq r-1$ we have $n_j=2$, and $n_r=1$.
Let
$\{\varpi^{\vee A}_j\mid  j=1,\ldots,2r\}$ be  the fundamental  coweights in
type  $A_{2r}$.  The  only  special  orbit  of  simple  roots  in  $A_{2r}$
corresponds   to  the   coweights  $\varpi^{\vee A}_r,\varpi^{\vee A}_{r+1}$,
hence, by Proposition \ref{Phi_sigma} we have
$\varpi_j^\vee=\pi(\varpi^{\vee A}_j)$      for     $j=1,\ldots,r-1$     and
$\varpi_r^\vee=\frac12\pi(\varpi^{\vee A}_r)$,  so  that  the
non-zero   parameters  of   the  quasi-isolated  classes  are
$\frac{\varpi^{\vee A}_j+\varpi^{\vee A}_{2r+1-j}}4$, with $i=1,\ldots,r$.
The    corresponding elements in $\GL_{2r+1}.\sigma$, $\PGL_{2r+1}.\sigma$ or
$\SL_{2r+1}.\sigma$  are  the  $t(j).\sigma$  for  $j=0,\ldots,r$. This proves
the list of representatives in (1) for odd rank.

Consider  now  $\SL_{2r}.\sigma$.  By  Proposition \ref{R(sigma)}, the root
datum  $R(\sigma)$ is of type  $B_r$ simply connected. We label the
roots in $B_r$ similarly to $C_r$ with the double bond at the end.
By \cite[Proposition
4.9]{cedric}  the parameters for quasi-isolated  classes (all isolated) are
$0$  and $\varpi_j^\vee/n_j$,  for $j=1,\ldots,r$.
We have  $n_1=1$ and  $n_j=2$ for $n>1$. Since
there is no special orbit of roots, we have $\varpi_j^\vee=\pi(\varpi^{\vee
A}_j)$  for  $j=1,\ldots,r$,  so  that  the  non-zero  parameters  for  the
quasi-isolated  classes  are  $\frac{\varpi_1^{\vee  A}+\varpi_{2r-1}^{\vee
A}}2$    and   $\frac{\varpi_j^{\vee   A}+\varpi_{2r-j}^{\vee   A}}4$   for
$j=2,\ldots,r-1$   and   $\frac{\varpi_r^{\vee   A}}2$.  The  corresponding
elements in $\SL_{2r}.\sigma$ are respectively
$\diag(-1,1,1,\ldots,1,-1).\sigma$,  and  $t(j).\sigma$  for  $j=2,\ldots,r$. 
This proves the list of representatives in (2).

Finally,  consider now the groups $\GL_{2r}.\sigma$ and $\PGL_{2r}.\sigma$.
By  Proposition \ref{R(sigma)}, the root datum $R(\sigma)$ is of type $B_r$
adjoint. If $r=1$ then $R(\sigma)$ is adjoint of type $B_1=A_1$. There is one
non-zero parameter equal to $\varpi_1^\vee=\varpi_1^{\vee A}$.
The corresponding class is not isolated. A representative is 
$\diag(1,-1).\sigma$ (which is conjugate to $t(1).\sigma$).

If   $r>1$,  by  \cite[4.E.1]{cedric},  the  non-zero  parameters  for  the
quasi-isolated  classes are $\frac{\varpi_j^\vee}2$ for $j=1,\ldots,r$, all
corresponding  to an isolated class except  when $j=1$. Lifting to type $A$
we     get     $\frac{\varpi_j^{\vee,A}+\varpi_{2r-j}^{\vee A}}4$    for
$j=1,\ldots,r$. The corresponding elements are the $t(j).\sigma$.
This proves the representatives in  (1) for even rank.

We compute now $C_\bG(t(j)\sigma)$.

If $r$ is even, the matrix $$t(j)J=
\antidiag(\underbrace{-i,\ldots,-i}_j,\underbrace{-1,\ldots,-1}_{r/2-j},
\underbrace{1,\ldots,1}_{r/2-j},\underbrace{-i,\ldots,-i}_j)$$   defines  a
bilinear  form which is the orthogonal sum of a symmetric form on the first
and  last  $j$  coordinates  and  an  alternating form on the middle $r-2j$
coordinates.  Hence  $C_\bG(t(j)\sigma)$  is  $\Orth_{2j}\times \Sp_{r-2j}$
when  $\bG=\GL_r$, is  $\SO_{2j}\times\Sp_{r-2j}$  when  $\bG=\SL_r$ and is
$(\Orth_{2j}\times\Sp_{r-2j})/\pm1$ when $\bG=\PGL_r$ (the computation is the
same  in  $\PGL$  since  if  $g$  preserves  a bilinear form up to a scalar
$\lambda$ then $\lambda^{-1/2} g$ preserves the form).

If $r$ is odd we have $$t(j)J=
\antidiag(\underbrace{-i,\ldots,-i}_j,\underbrace{-1,\ldots,-1}_{(r-1)/2-j},-1
\underbrace{1,\ldots,1}_{(r-1)/2-j},\underbrace{-i,\ldots,-i}_j),$$ hence it
defines  the orthogonal sum of  a symmetric form on  the first and last $j$
coordinates  together with  the $(r+1)/2$-th  coordinate and  of an alternating
form on the other coordinates. Hence $C_\bG(t(j)\sigma)$ is
$\Orth_{2j+1}\times  \Sp_{r-1-2j}$  when  $\bG=\GL_r$, is $\SO_{2j+1}\times
\Sp_{r-1-2j}$  when $\bG=\SL_r$ and is $(\Orth_{2j+1}\times\Sp_{r-1-2j})/\pm1
\simeq\SO_{2j+1}\times \Sp_{r-1-2j}$ when $\bG=\PGL_r$.

\end{proof}
\subsection*{Quasi-isolated classes in type $D$, $n_\sigma=2$}

The semisimple groups of type $D_n$ which admit a quasi-central automorphism
$\sigma$ with $n_\sigma=2$ are $\PSO_{2n}$, $\SO_{2n}$ and $\Spin_{2n}$, see
Proposition \ref{type of Gso}(iv). 

\begin{proposition}
If $p=2$, the points over $k$ of a semisimple group $\bG$ of type $D_n$ is
always 
the group $\SO_{2n}(k)$. The only quasi-isolated class of $\bG\cdot\sigma$ is
the class of $\sigma$, and $C_\bG(\sigma)\simeq \SO_{2n-1}(k)$.
\end{proposition}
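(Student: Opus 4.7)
The plan is to split the statement into its three assertions and handle each using the machinery of Sections~1--3. The main obstacle will be a careful computation showing $(\tilde S_\sigma)_{2'}=\{s_0\}$ for the relevant root datum, ensuring that a single quasi-isolated class emerges rather than two.

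For the first assertion, I would use that any central isogeny between connected semisimple groups of type $D_n$ has kernel contained in the scheme-theoretic center of $\Spin_{2n}$, which is an extension of group schemes of $2$-power exponent. In characteristic~$2$ such group schemes are infinitesimal and have trivial $k$-points; since isogenies between connected algebraic groups over an algebraically closed field are surjective on $k$-points, they induce bijections on $k$-points whenever their kernel has trivial $k$-points. Applying this to $\Spin_{2n}\to\SO_{2n}\to\PSO_{2n}$ yields the first assertion.

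For the classification of quasi-isolated classes, by the first step one may take $\bG=\SO_{2n}$ without changing $k$-points, and Proposition~\ref{R(sigma)}(3) then identifies $R(\sigma)$ with the simply connected root datum of type $C_{n-1}$. The coefficients of the highest root of $C_{n-1}$ on the simple roots are $(2,2,\ldots,2,1)$, so only $s_0$ and $s_{n-1}$ have $n_s=1$, while for the other vertices $\tfrac{1}{n_s}\varpi^\vee_s$ already carries a $2$ in the denominator and hence is excluded from $\BZ_{(2)}\Phi^\vee_\sigma$. The essential check is that the fundamental coweight $\varpi^\vee_{s_{n-1}}$, which unravels to $\tfrac12\sum_{i=1}^{n-1}i\,\alpha_i^\vee$ in the coroot basis of $C_{n-1}$, has genuine $2$-adic denominator~$2$ and is therefore not in $\BZ_{(2)}\Phi^\vee_\sigma$, while $\varpi^\vee_{s_0}=0$ trivially is. Hence $(\tilde S_\sigma)_{2'}=\{s_0\}$, and Proposition~\ref{param quasi-isolated} then yields a unique quasi-isolated class, parameterized by $\lambda=0$ and hence equal to the class of $\sigma$ itself.

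For the centralizer, Proposition~\ref{type of Gso}(4) applied with $\bG=\SO_{2n}$ gives $\Gso$ adjoint of type $B_{n-1}$ and $\Gs=\pm1\cdot\Gso$; in characteristic~$2$ the collapse $-1=1$ forces $\Gs=\Gso$. Reapplying the isogeny-bijection argument of the first step to the central isogeny $\Spin_{2n-1}\to\SO_{2n-1}$ (again with $2$-power kernel) identifies the $k$-points of the adjoint group of type $B_{n-1}$ with $\SO_{2n-1}(k)$, yielding the stated centralizer.
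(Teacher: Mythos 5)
Your proof is correct and follows essentially the same route as the paper: identify $R(\sigma)$ as the root datum of type $C_{n-1}$ via Proposition \ref{R(sigma)}, conclude that only the class of $\sigma$ survives when $p=2$, and read off $C_\bG(\sigma)$ from Proposition \ref{type of Gso}(4). The only difference is that where the paper cites \cite[5.B.2]{cedric} for the absence of non-central quasi-isolated classes, you carry out the $(\tilde S_\sigma)_{2'}=\{s_0\}$ computation directly (exactly as the paper itself does in the analogous type-$A$ proposition), and you make explicit the purely inseparable isogeny argument for the statements about $k$-points that the paper leaves implicit.
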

\begin{proof} The proposition results immediately from the fact that
$R(\sigma)$ is of type $C_{n-1}$ by Proposition \ref{R(sigma)} thus has no non-central
quasi-isolated elements by \cite[5.B.2]{cedric}; then $C_\bG(\sigma)$ is
described in Proposition \ref{type of Gso}.
\end{proof}

Assume now $p\neq2$. Then
$\SO_{2n}$ is the special orthogonal group for the quadratic form
given by the antidiagonal matrix of ones. An element of the diagonal maximal
torus of $\GL_{2n}$ is in $\SO_{2n}$ if and only if its diagonal entries
are of the form $t_1,\ldots,t_n,t_n\inv,\ldots,t_1\inv$. We shall denote by
$\bT$ the maximal torus of $\SO_{2n}$, by $\diag(t_1,\ldots,t_n)\in\bT$ such
an element, and the effect of $\sigma$ on it
is to replace $t_n$ with $t_n\inv$. If $\ve_1,\ldots,\ve_n$ is the
corresponding basis of $Y(\bT)$, the simple roots are
$\alpha_1=\ve_1-\ve_2,\ldots,\alpha_{n-1}=\ve_{n-1}-\ve_n,
\alpha_n=\ve_{n-1}+\ve_n$ and the highest root is $\alpha_0=\ve_1+\ve_2$.

If $(x_1,\ldots,x_n)$ is an element of the maximal torus $\tilde\bT$
of $\Spin_{2n}$,
where the coordinates correspond to the fundamental coweights, the quotient
map $\tilde\bT\to\bT$ is given by
$(x_1,\ldots,x_n)\mapsto\diag(x_1,\frac{x_2}{x_1},\ldots,
\frac{x_{n-2}}{x_{n-3}},\frac{x_{n-1}x_n}{x_{n-2}},\frac{x_n}{x_{n-1}})$.
The kernel of this map is generated by the central element
$z:=(1,\ldots,1,-1,-1)$. The effect of $\sigma$ is to exchange $x_{n-1}$ with
$x_n$, so that $z\in[\tilde\bT,\sigma]$.

Conversely $\diag(t_1,\ldots,t_n)\in\bT$ can,
if $\overline t$ is such that $\overline t^2=t_1\cdots t_n$,
be lifted to $(t_1,t_1t_2,\ldots,t_1\cdots t_{n-2},\overline t/t_n,\overline
t)\in\tilde\bT$. 
This gives two preimages differing by $z$,
thus the conjugacy class in $\Spin_{2n}\cdot\sigma$ of $\tilde t\sigma$, where
$\tilde t$ is a lift of $\diag(t_1,\ldots,t_n)\in\bT$, is uniquely defined.

Finally the maximal torus of $\PSO_{2n}$ can be identified with $\bT/\pm1$.
\begin{proposition} Assume $p\ne2$.
Let $t_i$ be the element
$\diag(\underbrace{-1,\ldots,-1}_i,\underbrace{1,\ldots,1}_{n-i})$ of
$\SO_{2n}$, and by abuse of notation still denote by $t_i$ one of its lifts to
$\tilde\bT$.
\begin{itemize}
\item The elements $\{t_i\sigma\}_{i=0,\ldots,n-1}$ are representatives of
the $\bG$-conjugacy classes of quasi-isolated elements of $\bG\cdot\sigma$
for $\bG=\SO_{2n}$ and for $\bG=\Spin_{2n}$. These elements are isolated,
and for $\bG=\SO_{2n}$ we have $C_\bG(t_i\sigma)\simeq
\SO_{2i+1}\times\SO_{2n-2i-1}\cdot\pm 1$. For $\bG=\Spin_{2n}$ we have
$C_\bG(t_i\sigma)\simeq(\Spin_{2i+1}\times\Spin_{2n-2i-1})/(z_1,z_2)$
where $z_1$ and $z_2$ are the
generators of the center of the corresponding algebraic group (there is no
quotient to take in the extreme case $i=0$ or $i=n-1$ where there is only one
component).
\item
If $\bG=\PSO_{2n}$ then representatives of the isolated classes are $\sigma$ and
the $\{t_i\sigma\}_{i=\lfloor\frac n2\rfloor,\ldots,n-2}$.
We have $C_\bG^0(t_i\sigma)\simeq
\SO_{2i+1}\times\SO_{2n-2i-1}$. The group $C_\bG(t_i\sigma)$ is connected
unless $i=\frac{n-1}2$ in which case $C_\bG(t_i\sigma)/C_\bG^0(t_i\sigma)$ 
is of order $2$ and outer elements exchange the two components 
$C_\bG^0(t_i\sigma)$, both isomorphic to $\SO_n$.

Let $u_j$ be the element $\diag(\underbrace{-1,\ldots,-1}_j,
\underbrace{i,\ldots,i}_{n-1-2j},\underbrace{1,\ldots,1}_{j+1})$.
Representatives of the quasi-isolated classes which are not 
isolated are $\{u_j\sigma\}_{j=0,\ldots,\lfloor\frac n2\rfloor-1}$, and
$C_\bG^0(u_j\sigma)=\GL_{n-1-2j}\times\SO_{2j+1}\times\SO_{2j+1}$.
The group $C_\bG(u_j\sigma)/C_\bG^0(u_j\sigma)$ 
is of order $2$ and outer elements act on $C_\bG^0(u_j\sigma)$ 
by the outer automorphism of $\GL_{n-1-2j}$ and 
the exchange of the two components $\SO_{2j+1}$.
\end{itemize}
\end{proposition}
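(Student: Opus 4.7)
The plan is to apply the six-step method spelled out at the beginning of Section 4. The first step is to identify $R(\sigma)$: Proposition \ref{R(sigma)}(3) gives $R(\sigma)$ of type $C_{n-1}$, simply connected, when $\bG=\SO_{2n}$; the Dynkin type $C_{n-1}$ persists for $\bG=\Spin_{2n}$ (it depends only on $\Sigma$ and $\sigma$), and part~(2) of the same proposition gives that $R(\sigma)$ is simply connected. For $\bG=\PSO_{2n}$ the type is again $C_{n-1}$, and a direct computation of $Y_\sigma=Y(\LT)$ — inflating the cocharacter lattice from $\SO_{2n}$ by the class of the center — shows $Y_\sigma=P(\Phi_\sigma^\vee)$, so $\CA_{R(\sigma)}\cong\BZ/2$.

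In the simply connected cases, Remark \ref{R(sigma) simply connected} forces every quasi-isolated element to be isolated. By \cite[Proposition 4.9]{cedric} applied to $C_{n-1}$ simply connected, the isolated classes correspond to the $n$ vertices $0$ and $\varpi_i^\vee/n_i$ of the fundamental alcove $\CC$. Unwinding the parameterization of Theorem \ref{parametrization} turns these vertices into the elements $\varpi_i^\vee(-1)$ of $\Tso$, which in matrix coordinates give precisely the $t_i$ of the statement. The root system of $(\bG^{t_i\sigma})^0$ is then extracted from Corollary \ref{root system of Gtso2}: a straightforward case analysis on the $\sigma$-fixed and $\sigma$-paired roots of $D_n$ selects exactly a $B_i\times B_{n-1-i}$ subsystem, giving $\SO_{2i+1}\times\SO_{2n-2i-1}$ as the connected centralizer. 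The component group then comes from Proposition \ref{exactsequence} together with Proposition \ref{W/W0}: $\CA_{t_i\sigma}\subseteq\CA_{R(\sigma)}=1$ is trivial, while $\Ts/\Tso$ is computed by Lemma \ref{A(Ts)} and equals $\BZ/2$ (with representative $-I$) for $\SO_{2n}$ and $1$ for $\Spin_{2n}$. This yields the $\cdot\pm1$ factor for $\SO_{2n}$ and the quotient $(\Spin_{2i+1}\times\Spin_{2n-2i-1})/(z_1,z_2)$ for $\Spin_{2n}$ (the diagonal pair of centers collapsing because both factors share the same image in $\Tso$).

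For $\PSO_{2n}$, the nontrivial element of $\CA_{R(\sigma)}$ is the automorphism $\gamma$ of the affine Dynkin diagram $\tilde C_{n-1}$ that swaps the two length-one endpoints $s_0\leftrightarrow s_{n-1}$ and reflects the interior nodes; it acts on $\CC$ by reflection through the midpoint. Proposition \ref{param quasi-isolated} then enumerates quasi-isolated classes via $\tW/\Wa$-orbits of subsets $\Omega\subseteq\tilde S_\sigma$ for which the $\CA_{R(\sigma)}$-stabilizer of $\Omega$ acts transitively on $\Omega$. Singletons fixed by $\gamma$ give the isolated classes after pairing up alcove vertices symmetrically about the centre; this reduces the list to $\sigma$ and $t_i\sigma$ for $i=\lfloor n/2\rfloor,\ldots,n-2$. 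Two-element $\gamma$-orbits produce the new non-isolated quasi-isolated classes; the corresponding parameter in $\CC$ is the midpoint of the two exchanged vertices, with denominator $4$, and Theorem \ref{parametrization} turns these into the $u_j$ of the statement, the primitive fourth root of unity $i$ arising precisely from this denominator $4$.

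The centralizer root systems in the $\PSO_{2n}$ case come from the same Corollary \ref{root system of Gtso2}: for $u_j$, the middle block of $i$-entries makes $\bar\alpha(u_j)=i/i=1$ on the differences $\ve_a-\ve_b$ but $\bar\alpha(u_j)=i^2=-1$ on the sums $\ve_a+\ve_b$, which extracts an $A_{n-2-2j}$ subsystem (hence the $\GL_{n-1-2j}$ factor), while the outer $\pm1$-blocks contribute the two $B_j$ factors, i.e.\ the two $\SO_{2j+1}$'s. The component groups in the isolated $i=(n-1)/2$ case (for $n$ odd) and in all non-isolated cases emerge from a nontrivial $\CA_{t\sigma}$, which by Proposition \ref{Y'_sigma embeds} and Proposition \ref{car qisole} is the stabilizer of $\Delta_{t\sigma}$ in $\CA_{R(\sigma)}$. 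The main obstacle will be matching the abstract outer element of $\CA_{t\sigma}$ with a concrete representative in $N_\bG(\bT)^\sigma$ and checking that it indeed acts as the outer automorphism of $\GL_{n-1-2j}$ while swapping the two $\SO_{2j+1}$ factors; a careful lift of $\gamma$ to a $\sigma$-invariant permutation matrix, followed by a direct conjugation on the Levi decomposition of the centralizer, should make this explicit.
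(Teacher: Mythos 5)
Your overall strategy is the one the paper follows: identify $R(\sigma)$ as type $C_{n-1}$ (simply connected for $\SO_{2n}$ and $\Spin_{2n}$, adjoint for $\PSO_{2n}$), import Bonnaf\'e's classification of (quasi-)isolated parameters in the fundamental alcove of $\tilde C_{n-1}$, lift the parameters to $\Tso\cdot\sigma$, read off the type of $\Phi_{t\sigma}$, and assemble the component group from $\Ts/\Tso$ and $\CA_{t\sigma}$. The enumeration of representatives, the computation of $\Ts/\Tso$ (trivial for $\Spin_{2n}$ and $\PSO_{2n}$, generated by $-1$ for $\SO_{2n}$), and the identification of the root-system types $B_i\times B_{n-1-i}$ and $A_{n-2-2j}\times B_j\times B_j$ all agree with the paper.

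However, two steps are asserted rather than proved. First, knowing that $\Sigma_{t_i\sigma}$ has type $B_i\times B_{n-1-i}$ does not determine the isogeny type of $\Gtso$: to get $\SO_{2i+1}\times\SO_{2n-2i-1}$ for $\bG=\SO_{2n}$, the precise quotient $(\Spin_{2i+1}\times\Spin_{2n-2i-1})/(z_1,z_2)$ for $\bG=\Spin_{2n}$, and the $\GL_{n-1-2j}\times\SO_{2j+1}\times\SO_{2j+1}$ description for $u_j$, one must compute the center of $\Gtso$, i.e.\ the common kernel in the relevant maximal torus of the simple roots $\gamma_0,\dots,\widehat{\gamma_i},\dots,\gamma_{n-1}$ of $\Sigma_{t_i\sigma}$; the paper carries this out explicitly in each of the three tori, and your parenthetical remark about ``the diagonal pair of centers collapsing'' is the right intuition but not a computation. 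Second, and more seriously, you explicitly defer the identification of the outer element of $\CA_{t\sigma}$ and its action (``should make this explicit''). That step carries the content of the last assertions of the statement: one needs that the generator of $\CA_{R(\sigma)}$ (Bonnaf\'e's $z_n$, the product of the two longest elements) sends $\beta_i\mapsto\beta_{n-1-i}$, hence stabilizes $\Phi_{t_i\sigma}$ exactly when $i=(n-1)/2$ and always stabilizes $\Phi_{u_j\sigma}$, and one must then verify its action on the components of $\Gtso$ (outer automorphism of the $\GL$ factor, swap of the two $\SO_{2j+1}$'s). Without this, the claims that $C_\bG(t_i\sigma)$ is connected except for $i=(n-1)/2$ and the description of the outer action remain unestablished. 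Both gaps are fillable by exactly the computations the paper performs, so the route is sound but the proof is incomplete as written.
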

\begin{proof}
By Proposition \ref{R(sigma)} the root datum $R(\sigma)$ is of type $C_{n-1}$,
simply connected when $\bG$ is $\Spin_{2n}$ or $\SO_{2n}$, and adjoint when $\bG$ is
$\PSO_{2n}$.

When $R(\sigma)$ is simply connected, by \cite[4.10]{cedric}, representatives of
quasi-isolated classes for $R(\sigma)$ (which are isolated) are
$t'_i=\diag(\underbrace{-1,\ldots,-1}_i,
\underbrace{1,\ldots,1}_{n-1-i})$; here we use a $\diag$ notation to describe
the torus of $\Sp_{2n-2}$ similar to the one we used for $\SO_{2n}$. With these conventions,
the map $\bT\to\LT$ consists in forgetting the last coordinate.
The $t'_i$ lift to $\Tso\cdot\sigma$ as $t_i\sigma$, hence the
representatives in this case.

When $R(\sigma)$ is adjoint, we
use \cite[corollary 5.4]{cedric}, which contains the
elements $t'_i$ for $i=\lfloor\frac n2\rfloor,\ldots,n-1$ (taking into account
that $t'_i=-t'_i$ since we are in the adjoint group). These lift to
$\Tso\cdot\sigma$ as
the $\{t_i\sigma\}_{i=\lfloor\frac n2\rfloor,\ldots,n-2}$, except 
$t'_{n-1}=1$ which lifts as $\sigma=t_0\sigma$. 
In the same corollary the quasi-isolated elements
which are not isolated are given as the
$u'_j=\diag(\underbrace{-1,\ldots,-1}_j,
\underbrace{i,\ldots,i}_{n-1-2j},\underbrace{1,\ldots,1}_j)$ which lift as the
$u_j\sigma$.

We have thus verified the list of representatives in each case. Let us now
determine $\Ts/\Tso$. By Lemma \ref{A(Ts)}(2) this group is trivial in the simply
connected (resp.~adjoint) case, since in this case $\sigma$ permutes a basis
of $X$, the fundamental weights  (resp.~the simple roots). Remains the case of $\SO$,
where from our description we see that $\Ts=\diag(t_1,\ldots,t_{n-1},\pm 1)$
thus is not connected; $\Ts/\Tso$ is generated by $-1$.

The next step is to compute $C_\bG(t\sigma)$ for each representative $t$.
We can use Proposition \ref{cas facile} to read 
off the type of $\Phi_{t\sigma}$ from
\cite[Theorem 5.1(b)($\alpha$)]{cedric} and the values of $\Omega$ given
in \cite[example 4.10 and Corollary 5.4]{cedric}. This confirms the type
of the root system of $\Gso$ given in the statement: for $t_i$
it is of type $B_i\times B_{n-i-1}$, and for $u_j$ of type
$A_{n-2-2j}\times B_j\times B_j$.

In order to describe the center of $\Gtso$, let us describe the simple roots
of this group. The simple roots of $R(\sigma)$ are
$\beta_1:=\alpha_1,\ldots,\beta_{n-2}:=\alpha_{n-2}$ and 
$\beta_{n-1}:=\overline\alpha_n=\alpha_{n-1}+\alpha_n$. The highest root
of $R(\sigma)$ is
$\beta_0:=\alpha_1+\alpha_0=\overline{\alpha_1+\ldots+\alpha_{n-1}}$. 
According to \cite[corollary 5.4]{cedric}, 
the simple roots of $\Phi_{t_i\sigma}$ are 
$\beta_0,\ldots,\beta_{i-1},\beta_{i+1},\ldots,\beta_{n-1}$, thus
the simple roots of $\Sigma_{t_i\sigma}$ consist of the same list where we
replace $\beta_0$ by $\pi(\alpha_1+\ldots+\alpha_{n-1})$ and $\beta_{n-1}$
by $\pi(\alpha_n)$. In the basis $\ve_i$, this can be stated as follows:
if we set
$\gamma_0:=\ve_1,\gamma_1:=\ve_1-\ve_2,\ldots,\gamma_{n-2}:=\ve_{n-2}-\ve_{n-1},
\gamma_{n-1}:=\ve_{n-1}$, the simple roots of $\Sigma_{t_i\sigma}$ are
$\gamma_0,\ldots,\gamma_{i-1},\gamma_{i+1},\ldots,\gamma_{n-1}$.
This enables us to compute the center of $C_\bG^0(t_i\sigma)$ when
$\bG=\SO_{2n}$,  the kernel of the above set of roots in $\Tso=
\diag(x_1,\ldots,x_{n-1},1)$ which we find to be the trivial group. The same
computation applies in $\bT/\pm 1$. In $\tilde\bT$ the center is reduced to
$z$ which fits with the description we give.

Similarly, the simple roots of $\Phi_{u_j\sigma}$ are
$\gamma_0,\ldots,\gamma_{j-1},\allowbreak\gamma_{j+1},\ldots,
\gamma_{n-2-j},\allowbreak\gamma_{n-j},\ldots,\gamma_{n-1}$. We find that
the center of $C_\bG^0(u_j\sigma)$ consists of the
$\diag(\underbrace{1,\ldots,1}_j,\underbrace{\lambda,\ldots,\lambda}_{n-1-2j},
\underbrace{1,\ldots,1}_j)$, which justifies its description.

It  remains to  describe the  elements of  $\CA_{t\sigma}$ (see Proposition
\ref{W/W0}).   According   to   \cite[Theorem  4.4(b)($\beta$)]{cedric}  it
consists   of   the   elements   of   $\CA_{R(\sigma)}$   which   stabilize
$\Phi_{t\sigma}$.  The group  $\CA_{R(\sigma)}$ is  generated by the element
denoted by $z_n$ in \cite[table  1]{cedric}, equal to  the longest element of
the  group  generated  by  $s_{\beta_1},\ldots,s_{\beta_{n-1}}$  times  the
longest  element of the subgroup generated by 
$s_{\beta_1},\ldots,s_{\beta_{n-2}}$. This
involution maps $\beta_i$ to $\beta_{n-i-1}$; it stabilizes
$\Phi_{t_i\sigma}$ if and only  if  $i=(n-1)/2$,    and   stabilizes   always
$\Phi_{u_j\sigma}$.  In  each  case  it  induces  the  action  described in
statement on $\Sigma_{t\sigma}$.
\end{proof}
We now look at exceptional groups.
We have extended the package \CHEVIE\ (see \cite{chevie})
of \GAP\ to implement the constructions
of this paper. A coset $\bG\cdot\sigma$ is represented as a
\verb+CoxeterCoset+, and the commands \verb+QuasiIsolatedRepresentatives+
and \verb+SemisimpleCentralizer+ have been extended to work with such cosets.
\subsection*{Quasi-isolated classes in type $E_6$, $n_\sigma=2$}
The  classification is the same for $\bG$ of type $E_6$
adjoint or simply connected since
the  datum $R(\sigma)$, of type $F_4$,  has a trivial connection index.
Since $\Ts=\Tso$ by Lemma \ref{A(Ts)}(2), using Remark \ref{R(sigma) simply connected} it
follows that \qss\ elements have a connected centralizer, and that
quasi-isolated elements are isolated.

The information in the next table can be obtained by the following \CHEVIE\
commands:
\begin{verbatim}
Gs:=RootDatum("2E6sc");
l:=QuasiIsolatedRepresentatives(Gs);
c:=List(l,t->SemisimpleCentralizer(Gs,t));
List(c,x->FundamentalGroup(c.group));
\end{verbatim}

The first column of the next table lists representatives
$t\in Q^\vee\otimes\BQ/\BZ$ such that $t\sigma$ runs over the quasi-isolated
classes.
The labelling of the diagram we use is
$\nnode1\edge\nnode3\edge\vertbar42\edge\nnode5\edge\nnode6$.
$$\begin{array}{|r|r|}
\noalign{\hrule}
\text{representatives}&C_G(t\sigma)\\
\noalign{\hrule}
0&F_4\\
\alpha^\vee_4/2&\PSp_8\\
\alpha^\vee_2/2+(\alpha^\vee_3+\alpha^\vee_5)/4&(\Spin_7\times\SL_2)/(z_1,z_2)\\
(2\alpha^\vee_2+\alpha^\vee_4)/3&(\SL_3\times \SL_3)/(z_1,z_2)\\
(3\alpha^\vee_2+2\alpha^\vee_4)/4&(\SL_4\times\SL_2)/(z_1,z_2)\\
\noalign{\hrule}
\end{array}
$$
In the above table $(z_1,z_2)$ means the element such that each component is a
generator of the center of the corresponding algebraic group.

\subsection*{Quasi-isolated classes in type $D_4$, $n_\sigma=3$}
\label{3D4}

The  classification is the same for $\bG$ adjoint or simply connected since
the  datum $R(\sigma)$, of type $G_2$,  has a trivial connection index. 
Since $\Ts=\Tso$ by Lemma \ref{A(Ts)}(2), using Remark \ref{R(sigma) simply connected} it
follows that \qss\ elements have a connected centralizer, and that
quasi-isolated elements are isolated.

The information in the next table are obtained as for the above table, using
this time \verb+"3D4sc"+ instead of \verb+"2E6sc"+.
The labelling of the diagram we use is
$\nnode{1}\edge\vertbar{3}{2}\edge\nnode{4}$
$$\begin{array}{|r|r|}
\noalign{\hrule}
\text{representatives}&C_G(t\sigma)\\
\noalign{\hrule}
0&G_2\\
\alpha_3^\vee/2&\PGL_3\\
2\alpha_3^\vee/3&(\SL_2\times\SL_2)/(z_1,z_2)\\
\noalign{\hrule}
\end{array}
$$

\vfill\eject
\printindex
\end{document}